\newcommand\blfootnote[1]{%
  \begingroup
  \renewcommand\thefootnote{}\footnote{#1}%
  \addtocounter{footnote}{-1}%
  \endgroup
}
\theoremstyle{plain}
\newtheorem{theorem}{Theorem}[section]
\newtheorem{corollary}[theorem]{Corollary}
\newtheorem{lemma}[theorem]{Lemma}
\theoremstyle{definition}
\newtheorem{definition}[theorem]{Definition}
\theoremstyle{remark}
\newtheorem{remark}[theorem]{\bf Remark}
\begin{document}    
\author[1]{Ambily Ambattu Asokan}
\author[2]{Sugilesh H.\footnote{Corresponding author}}
\affil[1,2]{Department of Mathematics,
  \textit{Cochin University of Science and Technology},
  \textit{Kochi-682022, Kerala, India.}}
  \affil[]{\textsuperscript{}\emph{ambily@cusat.ac.in}, \textsuperscript{2}\emph{sugileshh@gmail.com}}
 \title{Horrocks' theorem for odd orthogonal groups}
  \date{}
\maketitle    
\begin{abstract}
 We prove Horrocks' theorem for the odd elementary orthogonal group, which gives a decomposition of an orthogonal matrix with entries from a polynomial ring $R[X]$, over a commutative ring $R$ in which 2 is invertible, as a product of an orthogonal matrix with entries in $R$ and an elementary orthogonal matrix with entries from $R[X]$. 
\end{abstract}
\blfootnote{Date:\date{13/03/2025}}
\blfootnote{{\it Keywords:} Quadratic modules, Orthogonal group, Odd elementary orthogonal group, Local-Global Principle, Quillen-Suslin Theorem.}
 \blfootnote{{\it 2020 Mathematics Subject Classification:}  19G99; 11E08; 11E70; 13C10; 20H25.}
\section{Introduction}

In 1955, J-P. Serre posed a fundamental question in algebra, asking whether every finitely generated projective module over the polynomial ring $R = k[X_1, ,\dots, X_r]$ is free \cite{JPS1955}. This question, known as `Serre's Conjecture', is equivalent to determining whether all algebraic vector bundles over affine space $k^r$ are trivial. This problem remained unsolved for many years until D. Quillen and A. A. Suslin independently gave a complete solution in 1976. Quillen's solution \cite{Quillen1976} relied significantly on Horrocks' theorem, while Suslin's approach  \cite{Suslin1976} was based on the local-global principle. Since then, these two techniques, Local-Global Principle and Horrocks’ Theorem have become central tools in algebraic K-Theory.

\vspace{1mm}

Horrocks' theorem provides a criterion for determining when a vector bundle over the affine line ${\mathbb A}^1$ is trivial \cite{Hor1964}. The algebraic formulation of Horrocks' theorem states the following: ``\emph{Let $R$ be a commutative local ring, and let $P$ be a finitely generated projective module over the polynomial ring $R[X]$. If the localization $P_X$ is free over $R_X$, then $P$ itself must be free over $R[X]$}
" \cite{Horrocks1964}. Some important analogues of Horrocks' Theorem found in the literature include:
\emph{Serre's Splitting Theorem} \cite{JPS1955}, \emph{Quillen-Suslin Theorem} \cite{Quillen1976,Suslin1976}, \emph{Bass-Quillen Conjecture} \cite{Bass1963}, and \emph{Homotopy Lifting Property for Vector Bundles} \cite{Thomason1985}.

In \cite{Suslin1977}, A. A. Suslin extended Horrock's Theorem on projective modules by proving a $\rm{K}_1$-analogue as follows:
\begin{theorem}
     Let R be a commutative ring and $\alpha \in
{\rm GL}_n(R[X]), n \geq 3$. Suppose that there exists $\beta \in {\rm GL}_n(R[X^{-1}])$ such that
$\alpha \cdot{\beta}^{-1} \in {\rm E}_n(R[X,X^{-1}])$. Then $\alpha \in {\rm GL}_n(R)\cdot{\rm E}_n(R[X])$.
\end{theorem}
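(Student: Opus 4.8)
The plan is to reduce the statement to a group-theoretic factorization of elementary Laurent matrices. Write $A = R[X]$, $B = R[X^{-1}]$ and $C = R[X,X^{-1}]$. Inside $C$ one has $A \cap B = R$, so a matrix with entries simultaneously in $A$ and in $B$ has entries in $R$; if such a matrix is moreover invertible over $C$, then its determinant is a unit of $C$ lying in $R$, hence a unit of $R$ (apply the evaluation $C \to R$, $X \mapsto 1$). Thus $\mathrm{GL}_n(A) \cap \mathrm{GL}_n(B) = \mathrm{GL}_n(R)$.

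The technical core I would isolate is the \emph{splitting lemma}: for $n \geq 3$, $\mathrm{E}_n(C) = \mathrm{E}_n(A)\cdot\mathrm{E}_n(B)$. Granting it, the theorem follows at once. By hypothesis $\alpha\beta^{-1} \in \mathrm{E}_n(C)$, so write $\alpha\beta^{-1} = \varepsilon_{+}\varepsilon_{-}$ with $\varepsilon_{+} \in \mathrm{E}_n(A)$ and $\varepsilon_{-} \in \mathrm{E}_n(B)$. Then $\varepsilon_{+}^{-1}\alpha = \varepsilon_{-}\beta$; the left-hand side lies in $\mathrm{GL}_n(A)$ and the right-hand side in $\mathrm{GL}_n(B)$, so this common value is some $h \in \mathrm{GL}_n(A) \cap \mathrm{GL}_n(B) = \mathrm{GL}_n(R)$. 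Hence $\alpha = \varepsilon_{+}h$; since $\mathrm{E}_n(R[X])$ is normal in $\mathrm{GL}_n(R[X])$ for $n \geq 3$, we rewrite $\alpha = h\,(h^{-1}\varepsilon_{+}h) \in \mathrm{GL}_n(R)\cdot\mathrm{E}_n(R[X])$, as desired. (One may first normalize so that $\alpha(0) = I$, replacing $\beta$ by $\beta\alpha(0)^{-1}$, which preserves the hypothesis; then evaluating $\alpha = \varepsilon_{+}h$ at $X = 0$ forces $h = \varepsilon_{+}(0)^{-1} \in \mathrm{E}_n(R)$, so in fact $\alpha \in \mathrm{E}_n(R[X])$.)

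Everything therefore rests on the splitting lemma, and that is the main obstacle. Given $\varepsilon \in \mathrm{E}_n(C)$, write it as a product of elementary generators $E_{ij}(g)$, $g \in C$, and split each Laurent polynomial as $g = g^{+} + g^{-}$ with $g^{+} \in A$ the part of nonnegative $X$-degree and $g^{-} \in X^{-1}R[X^{-1}] \subseteq B$ the part of strictly negative $X$-degree, so that $E_{ij}(g) = E_{ij}(g^{+})E_{ij}(g^{-})$. This presents $\varepsilon$ as a product whose factors lie alternately in $\mathrm{E}_n(A)$ and $\mathrm{E}_n(B)$, and the task becomes a sorting problem: move every $\mathrm{E}_n(A)$-factor to the left past the $\mathrm{E}_n(B)$-factors, using that $[E_{ij}(a),E_{kl}(b)]$ is trivial when no index is shared and is a single elementary ($E_{il}(ab)$ or $E_{kj}(-ba)$) when one index is shared. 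Two things need care. First, the degenerate pairing $E_{ij}(a)E_{ji}(b)$, whose commutator is not elementary: here I would invoke $n \geq 3$ to rewrite $E_{ij}(a) = [E_{im}(a),E_{mj}(1)]$ for a third index $m$, so that such pairings never occur. Second, termination: commuting $E_{ij}(a)$ with $a \in A$ past $E_{kl}(b)$ with $b \in X^{-1}R[X^{-1}]$ produces a new factor $E(ab)$ whose $A$-part $(ab)^{+}$ has top $X$-degree strictly below that of $a$ (since $\deg_X b \leq -1$), so an induction on the top $X$-degrees of the $A$-side factors closes the argument modulo some bookkeeping. (An alternative is to first reduce to $R$ local via the Quillen--Suslin local--global principle for $\mathrm{E}_n$, thereby gaining $\mathrm{SL}_n(R) = \mathrm{E}_n(R)$; but the combinatorial splitting lemma is needed either way and is where the genuine difficulty lies.)
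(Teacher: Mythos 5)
Your reduction of the theorem to the factorization $\mathrm{E}_n(R[X,X^{-1}]) = \mathrm{E}_n(R[X])\cdot\mathrm{E}_n(R[X^{-1}])$ is clean, but that ``splitting lemma'' is false, already for $R=k$ a field, and it is precisely the difficulty the theorem is hiding. By the Whitehead lemma the matrix $\mathrm{diag}(X,X^{-1},1,\dots,1)$ lies in $\mathrm{E}_n(k[X,X^{-1}])$; yet if it equalled $PQ$ with $P\in\mathrm{GL}_n(k[X])$ and $Q\in\mathrm{GL}_n(k[X^{-1}])$, the second row of $P=\mathrm{diag}(X,X^{-1},1,\dots,1)\cdot Q^{-1}$ would be $X^{-1}$ times a row with entries in $k[X^{-1}]$ and simultaneously a row with entries in $k[X]$, hence zero, contradicting the invertibility of $Q$. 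So $\mathrm{E}_n(C)\not\subseteq\mathrm{GL}_n(A)\cdot\mathrm{GL}_n(B)$, let alone $\mathrm{E}_n(A)\cdot\mathrm{E}_n(B)$. (This is the Birkhoff decomposition of the loop group: the big cell $\mathrm{GL}_n(k[X])\cdot\mathrm{GL}_n(k[X^{-1}])$ misses every double coset indexed by a nonzero coweight $\mathrm{diag}(X^{k_1},\dots,X^{k_n})$.) Your sorting argument breaks exactly at the configurations $E_{ij}(a)E_{ji}(b)$ you try to legislate away: rewriting $E_{ij}(a)=[E_{im}(a),E_{mj}(1)]$ and then splitting each Laurent entry regenerates such pairs, and the Whitehead-lemma words that produce $\mathrm{diag}(X,X^{-1})$ are precisely the unsortable ones; no induction on top degrees can terminate through them.

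The correct route --- Suslin's, and the one this paper follows for its odd orthogonal analogue --- is to localize first via the local--global principle (the analogue of Theorem~\ref{Aux4.14}), then use a Bruhat-type decomposition over the residue field $k$ (the analogues of Theorems~\ref{Aux4.17} and~\ref{Ele5.7}) to reduce to the case $\alpha\equiv\beta\equiv I \bmod \mathfrak m$. One then shows only that $\mathrm{E}_n(R[X,X^{-1}])$ is contained in a product set $V_1V_2V_3V_4$ in which the monomial-diagonal obstruction is isolated as the factor $V_2=\{\mathrm{diag}(X^{k_1},\dots,X^{k_n})\}$ (cf.\ Corollary~\ref{Ele5.10}); the congruence condition modulo $\mathfrak m$ forces every $k_i=0$ (cf.\ Theorem~\ref{Ele5.11}), and the splitting $\alpha=\alpha_+\cdot\alpha_-$ is obtained only on the congruence subgroup $\mathrm{E}_n(R[X,X^{-1}])\cap\mathrm{GL}_n(\mathfrak m[X,X^{-1}])$ (cf.\ Theorem~\ref{Hor6.5}). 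Your endgame --- intersecting $\mathrm{GL}_n(R[X])$ with $\mathrm{GL}_n(R[X^{-1}])$ to land in $\mathrm{GL}_n(R)$ --- is exactly how the genuine proof finishes, so the gap is concentrated entirely in the splitting step, and it is not a matter of bookkeeping: without the localization and the congruence reduction the statement you want to prove is simply not true.
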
 
In \cite{SusKop1977}, A. A. Suslin and V. I. Kopeiko established the even orthogonal analogue of Horrocks' theorem as follows:  \begin{theorem}
        Let R be a commutative ring with unity in which 2 is invertible, $n\geq3$ is a natural number, $\alpha \in {\rm O}_{2n}(R[X])$, $\beta \in {\rm O}_{2n}(R[X^{-1}])$. If $\alpha \cdot \beta^{-1}\in {\rm EO}_{2n}(R[X,X^{-1}]) $, then $\alpha \in {\rm O}_{2n}(R)\cdot{\rm EO}_{2n}(R[X])$ and $\beta \in {\rm O}_{2n}(R) \cdot {\rm EO}_{2n}(R[X^{-1}])$.
   \end{theorem}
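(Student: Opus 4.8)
The plan is to transcribe A.~A.~Suslin's $\mathrm{K}_1$-analogue of Horrocks' theorem \cite{Suslin1977} (the $\mathrm{GL}_n$ statement above) into the orthogonal world, the genuinely new inputs being a local--global principle and a dilation lemma for the elementary orthogonal group; throughout, $n\ge 3$ and the invertibility of $2$ are exactly what make $EO_{2n}$ well behaved — it is normal in $O_{2n}$, and its generators obey the Chevalley-type commutator relations of type $D_n$, in which all root subgroups are one-parameter and additive. \textbf{First, a normalization:} replace $\alpha$ by $\alpha(0)^{-1}\alpha$ and $\beta$ by $\alpha(0)^{-1}\beta$. Since $\alpha(0)\in O_{2n}(R)$ (evaluate the orthogonality relation at $X=0$) and $EO_{2n}\trianglelefteq O_{2n}$, this merely conjugates $\alpha\beta^{-1}$ by an element of $O_{2n}(R)$ and hence preserves the hypothesis $\alpha\beta^{-1}\in EO_{2n}(R[X,X^{-1}])$. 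We may therefore assume $\alpha(0)=I_{2n}$, and then the conclusion $\alpha\in O_{2n}(R)\cdot EO_{2n}(R[X])$ is equivalent to $\alpha\in EO_{2n}(R[X])$: if $\alpha=c\eta$ with $c\in O_{2n}(R)$ and $\eta\in EO_{2n}(R[X])$, evaluation at $0$ gives $c=\eta(0)^{-1}\in EO_{2n}(R)$. The assertion for $\beta$ is the mirror image under $X\mapsto X^{-1}$, which interchanges $R[X]$ with $R[X^{-1}]$ and replaces $\alpha\beta^{-1}$ by its inverse.

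\textbf{Next, reduce to $R$ local.} I would establish the local--global principle: if $\gamma\in O_{2n}(R[X])$ satisfies $\gamma(0)=I$ and $\gamma_{\mathfrak m}\in EO_{2n}(R_{\mathfrak m}[X])$ for every maximal ideal $\mathfrak m$, then $\gamma\in EO_{2n}(R[X])$. This comes out of the by-now-standard Quillen-patching argument — run through $\gamma\rightsquigarrow\gamma(X+Y)\gamma(Y)^{-1}$ and comaximality of a finite family $s_1^{N_1},\dots,s_r^{N_r}$ — once one has a dilation lemma for $EO_{2n}$: for $s\in R$, if $\sigma\in EO_{2n}(R_s[X])$ with $\sigma(0)=I$, then for $N\gg 0$ the matrix $\sigma(s^{N}X)$ comes from an element of $EO_{2n}(R[X])$. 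The dilation lemma is proved by induction on the word length of $\sigma$ in the elementary orthogonal generators, using the $D_n$ commutator identities to rewrite that word in terms of generators vanishing at $X=0$ before clearing denominators; this is the first place $n\ge 3$ and $2\in R^{\times}$ are genuinely used. Granting the theorem for local rings, localise the hypotheses at each $\mathfrak m$ to get $\alpha_{\mathfrak m}\in EO_{2n}(R_{\mathfrak m}[X])$, and then apply the principle with $\gamma=\alpha$.

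\textbf{The local case} is the heart of the matter. With $R$ local, $\alpha(0)=I$ and $\epsilon:=\alpha\beta^{-1}\in EO_{2n}(R[X,X^{-1}])$, I would copy Suslin's proof of the linear case inside $O_{2n}$: exploit $\alpha=\epsilon\beta$ under the three constraints that $\alpha$ is polynomial in $X$, that $\beta$ is polynomial in $X^{-1}$, and that $\alpha(0)=I$. Concretely, write $\epsilon$ as a word in $EO_{2n}(R[X,X^{-1}])$ and split each generator $oe_{ij}(f)=oe_{ij}(f_{+})\,oe_{ij}(f_{-})$ with $f=f_{+}+f_{-}$, $f_{+}\in R[X]$, $f_{-}\in X^{-1}R[X^{-1}]$; then, using the $D_n$ commutator calculus together with a further dilation in an auxiliary variable, manoeuvre $\alpha$ into the form (an element of $EO_{2n}(R[X])$) times (a constant orthogonal matrix), the constant being forced to equal $I$ by evaluation at $X=0$.

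The hard part will be precisely this local step: controlling the mixed Laurent coefficients that the commutator rewriting produces and actually landing $\alpha$ inside $EO_{2n}(R[X])$ — over a local ring this is possible but technically delicate, being the orthogonal counterpart of the substantial core of the linear argument of \cite{Suslin1977}; everything in the first two steps is routine once the structural facts about $EO_{2n}$ (normality, $D_n$ commutator relations, dilation) are set up. An alternative worth trying is to shortcut via \cite{Suslin1977} itself: embedding $O_{2n}\hookrightarrow GL_{2n}$ and applying that theorem gives $\alpha=\alpha(0)\,\delta$ with $\delta\in E_{2n}(R[X])\cap O_{2n}(R[X])$ and $\delta(0)=I$, but one is then left to prove $\delta\in EO_{2n}(R[X])$ — a comparison of the elementary subgroups $E_{2n}$ and $EO_{2n}$ over $R[X]$, controlled by the spinor norm, with its own subtleties — so I would keep to the direct route.
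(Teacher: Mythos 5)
The statement you are proving is the even orthogonal Horrocks theorem of Suslin and Kopeiko; the paper quotes it without proof and instead proves the odd analogue by the same method, so the relevant comparison is with that argument. Your outer frame --- normalize so that $\alpha(0)=I_{2n}$, reduce to a local ring by a Quillen-type local--global principle for ${\rm EO}_{2n}$, then treat the local case --- is the right architecture and matches the paper's (its Theorem~\ref{Aux4.14} plays the role of your patching step). But the entire mathematical content lives in the local case, and your sketch of it would not go through as written. Splitting each Laurent generator as $oe_{ij}(f)=oe_{ij}(f_+)\,oe_{ij}(f_-)$ and then pushing the $X^{-1}$-parts to one side by commutator calculus fails: conjugating a positive generator by a negative one lands you back in ${\rm EO}_{2n}(R[X,X^{-1}])$ with no control on denominators, and the $D_n$ commutator identities alone do not resolve this. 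Moreover your normalization only achieves $\alpha(0)=I$, whereas the argument needs the much stronger congruence $\alpha\equiv\beta\equiv I \pmod{\mathfrak m}$, obtained by first reducing modulo the maximal ideal and using the decomposition ${\rm O}(C)={\rm O}_2(C)\cdot{\rm EO}(C)$ over $C=k[X^{\pm 1}]$ together with a lifting of monomial matrices; without this congruence condition none of the subsequent splitting machinery applies.

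Concretely, the missing ingredients (Sections 3--6 of the paper in the odd case, following Suslin--Kopeiko) are: a Bruhat-type decomposition ${\rm O}(k)={\rm TO}(k)\cdot{\rm MO}(k)\cdot{\rm TO}(k)$ over the residue field, which requires constructing the triangular subgroups ${\rm TO}$ and ${\rm TO}'$; conjugation lemmas for $\theta={\rm diag}(X,\dots,X,1,\dots,1)$ showing that $\theta\,{\rm E}_{v,w}(Xf)\,\theta^{-1}$ remains elementary, which in turn rest on transvection membership results of the type ``${\rm E}_{v,w}(x)\in{\rm EO}$ for $x$ in a power of the order ideal of $v$''; and the resulting theorem that every element of ${\rm EO}_{2n}(R[X,X^{-1}])\cap{\rm O}_{2n}(\mathfrak m[X,X^{-1}])$ factors as $\alpha_+\cdot\alpha_-$ with $\alpha_\pm\in{\rm EO}_{2n}(R[X^{\pm 1}])$, from which the conclusion follows via $\alpha_+^{-1}\alpha=\alpha_-\beta\in{\rm O}_{2n}(R[X])\cap{\rm O}_{2n}(R[X^{-1}])={\rm O}_{2n}(R)$. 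You correctly flag the local step as the hard part, but you neither identify these ingredients nor supply substitutes, so what you have is a sound plan of attack with the core of the proof missing.
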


In \cite{KOPK1978}, V. I. Kopeiko proved the symplectic group analogue of Horrocks' theorem as follows:
\begin{theorem}
    Let R be a commutative ring with unity in which 2 is invertible and $\alpha \in
{\rm Sp}_{2n}(R[X])$, $n \geq 1$. Suppose that there exists a symplectic transformation $\beta \in {\rm Sp}_{2n}(R[X^{-1}])$ such that
$\alpha \cdot{\beta}^{-1} \in {\rm ESp}_{2n}(R[X,X^{-1}])$, then $\alpha \in {\rm Sp}_{2n}(R) \cdot {\rm ESp}_{2n}(R[X])$.
\end{theorem}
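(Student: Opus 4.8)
The plan is to run, for the symplectic elementary group, the two-part argument Suslin used in the ${\rm GL}_n$-case and Suslin--Kopeiko used in the even orthogonal case quoted above: first reduce to a local base ring via a local--global principle, then settle the local case by a dilation argument. Write $G={\rm Sp}_{2n}$, $E={\rm ESp}_{2n}$, and $\epsilon:=\alpha\beta^{-1}\in E(R[X,X^{-1}])$. First I would normalize: since $\alpha(0)\in G(R)\subseteq G(R[X^{-1}])$, replacing $\alpha$ by $\alpha\,\alpha(0)^{-1}$ and $\beta$ by $\beta\,\alpha(0)^{-1}$ leaves $\epsilon$ unchanged and arranges $\alpha(0)=I$. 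Using that $E(R[X])$ is normal in $G(R[X])$ (the symplectic analogue of Suslin's normality theorem, valid for $n\ge 2$; the case $n=1$, i.e.\ ${\rm SL}_2$, must be handled directly), it then suffices to prove $\alpha\in E(R[X])$, as re-inserting the factor $\alpha(0)$ gives $\alpha\in E(R[X])\cdot G(R)=G(R)\cdot E(R[X])$.

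Next I would apply a local--global principle of Quillen--Suslin type for the elementary subgroup $E$, in its symplectic form: a matrix $\sigma\in G(R[X])$ with $\sigma(0)=I$ lies in $E(R[X])$ as soon as $\sigma_{\mathfrak m}\in E(R_{\mathfrak m}[X])$ for every maximal ideal $\mathfrak m$ of $R$. Since $\alpha,\beta,\epsilon$ localize compatibly and $\alpha_{\mathfrak m}(0)=I$, this reduces the theorem to the case of $R$ local; conversely, the local case will give $\alpha_{\mathfrak m}\in G(R_{\mathfrak m})\cdot E(R_{\mathfrak m}[X])$, which together with $\alpha_{\mathfrak m}(0)=I$ forces $\alpha_{\mathfrak m}\in E(R_{\mathfrak m}[X])$, exactly what the principle requires.

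So suppose $R$ is local, $\alpha(0)=I$, and $\alpha=\epsilon\beta$ with $\epsilon\in E(R[X,X^{-1}])$ and $\beta\in G(R[X^{-1}])$; write $\epsilon$ as a finite word in the standard generators of $E$ over $R[X,X^{-1}]$, whose off-diagonal entries are Laurent polynomials. The goal is to find $\epsilon_{+}\in E(R[X])$ such that $\epsilon_{+}^{-1}\alpha$ also extends to $X=\infty$, i.e.\ is the restriction of an element of $G(R[X^{-1}])$ --- equivalently, to write $\epsilon=\epsilon_{+}\epsilon_{-}$ with $\epsilon_{-}\in E(R[X^{-1}])$ compatibly with the decomposition $\epsilon=\alpha\beta^{-1}$. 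Granting this, $\epsilon_{+}^{-1}\alpha=\epsilon_{-}\beta$ has entries in $R[X]\cap R[X^{-1}]=R$, so it is a constant matrix in $G(R)$; evaluating at $X=0$, where $\alpha=I$, identifies it as $\epsilon_{+}(0)^{-1}$, whence $\alpha=\epsilon_{+}\,\epsilon_{+}(0)^{-1}\in E(R[X])$. One should note that a general elementary Laurent matrix need not split this way --- e.g.\ ${\rm diag}(X,X^{-1})$ lies in ${\rm ESp}_2$ over a field but patches a non-trivial bundle on $\mathbb P^1$ --- so the argument must genuinely use the factorization $\epsilon=\alpha\beta^{-1}$, not merely $\epsilon\in E$.

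The hard part will be producing $\epsilon_{+}$, that is, the dilation step itself. Following the technique of Suslin and of Suslin--Kopeiko, I would introduce an auxiliary variable, clear the denominators of the off-diagonal entries of $\epsilon$ by a common power of $X$, and then use the Steinberg-type commutator identities among the symplectic elementary generators to transport the negative-degree part of $\epsilon$ across the non-negative part in a manner controlled by $\beta$; this is where the symplectic-specific computations replace Suslin's linear ones, and where the hypothesis $2\in R^\times$ is used, through the commutator calculus and normality properties of $E$. The local--global principle invoked above itself rests on a milder instance of the same dilation lemma, so once that lemma is established for ${\rm ESp}_{2n}$ the remainder of the argument is formal; I therefore expect essentially all of the difficulty to lie in the dilation lemma for the symplectic elementary group.
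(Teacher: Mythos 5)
Your overall architecture is the right one and matches the strategy this paper uses for its odd orthogonal analogue (and that Kopeiko and Suslin--Kopeiko use in the symplectic and even orthogonal cases): reduce to a local base ring by a local--global principle, split the Laurent elementary matrix $\epsilon=\alpha\beta^{-1}$ as $\epsilon_{+}\epsilon_{-}$ with $\epsilon_{\pm}$ elementary over $R[X^{\pm1}]$, and conclude from $\epsilon_{+}^{-1}\alpha=\epsilon_{-}\beta\in {\rm Sp}_{2n}(R[X])\cap{\rm Sp}_{2n}(R[X^{-1}])={\rm Sp}_{2n}(R)$. The normalization $\alpha(0)=I$ and the use of normality to commute ${\rm ESp}_{2n}(R[X])$ past ${\rm Sp}_{2n}(R)$ are fine for $n\ge2$, and you are right to flag $n=1$ as requiring separate treatment.

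The genuine gap is in how the splitting $\epsilon=\epsilon_{+}\epsilon_{-}$ is obtained, and your own example ${\rm diag}(X,X^{-1})$ shows why the step as you describe it cannot work: an arbitrary element of ${\rm ESp}_{2n}(R[X,X^{-1}])$ does not split. You say the argument ``must genuinely use the factorization $\epsilon=\alpha\beta^{-1}$'' through some transport of negative-degree parts ``controlled by $\beta$,'' but that is not where the factorization actually enters. The mechanism in this paper (Sections 4--6) and in its predecessors is: (a) over the residue field $k$ one has a Bruhat-type decomposition into triangular--monomial--triangular pieces (here ${\rm O}_{2n+1}(k)={\rm TO}_{2n+1}(k)\cdot{\rm MO}_{2n+1}(k)\cdot{\rm TO}_{2n+1}(k)$ together with ${\rm O}_{2n+1}(C)={\rm O}_2(C)\cdot{\rm EO}_{2n+1}(C)$ for $C=k[X^{\pm1}]$), which, lifted to $R$, lets one multiply $\alpha$ by elementary matrices over $R[X]$ and $\beta$ by elementary matrices over $R[X^{-1}]$ until $\epsilon\equiv I \bmod \mathfrak m[X,X^{-1}]$ --- this is the only place where the separate polynomial and Laurent natures of $\alpha$ and $\beta$ are exploited; and (b) the splitting theorem is then proved only for the congruence subgroup ${\rm E}(R[X,X^{-1}])\cap G(\mathfrak m[X,X^{-1}])$, by showing that the set $G_{+}\cdot G_{-}$ is stable under left multiplication by all elementary generators, which rests on the dilation lemmas (conjugation by $\theta={\rm diag}(X,\dots,X,1,\dots,1)$ preserving the polynomial elementary group, as in Lemmas~\ref{Ele5.1}--\ref{Ele5.4}) plus the transvection estimates with parameters in $o(v)^2$ and $o(v)^3$. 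Your sketch omits both the reduction to the congruence subgroup and the congruence-subgroup form of the splitting statement, so as written the plan founders on exactly the obstruction you point out; supplying these two ingredients is the bulk of the proof.
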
 

In \cite{Fasel2015}, B. Calmès and J. Fasel provided the generators of the odd elementary orthogonal group, giving insights into the algebraic structure of these groups. We establish the odd orthogonal analogue of Horrocks' theorem. Furthermore, we derive several  decomposition results for the elementary orthogonal group. 

The main result we prove in this paper is as follows:   \begin{theorem}
       Suppose $R$ is a commutative ring in which 2 is invertible (need not be local). Let $\alpha \in {\rm O}_{2n+1}(R[X])$ and $\beta \in {\rm O}_{2n+1}(R[X^{-1}])$, for $n\geq 3$. If $\alpha \cdot \beta^{-1}\in {\rm EO}_{2n+1}(R[X,X^{-1}])$, then $\alpha \in {\rm O}_{2n+1}(R)\cdot{\rm EO}_{2n+1}(R[X])$ and $\beta \in {\rm O}_{2n+1}(R) \cdot {\rm EO}_{2n+1}(R[X^{-1}])$.
   \end{theorem}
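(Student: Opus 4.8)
The plan is to transpose the scheme used by Suslin for his $\mathrm{K}_1$-analogue of Horrocks' theorem, and by Suslin and Kopeiko for the even orthogonal case, to the odd elementary orthogonal group $\mathrm{EO}_{2n+1}$ and the Calmès--Fasel generators. The first step is to reduce to the case where $R$ is local. Since $\alpha(0)\in \mathrm{O}_{2n+1}(R)$, replace $\alpha$ by $\alpha':=\alpha(0)^{-1}\alpha$ and $\beta$ by $\beta':=\alpha(0)^{-1}\beta$; then $\alpha'(0)=I$ and $\alpha'(\beta')^{-1}=\alpha(0)^{-1}(\alpha\beta^{-1})\alpha(0)\in \mathrm{EO}_{2n+1}(R[X,X^{-1}])$, using that $\mathrm{EO}_{2n+1}$ is normalised by $\mathrm{O}_{2n+1}(R)$ when $n\ge 3$. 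So it suffices to show $\alpha'\in \mathrm{EO}_{2n+1}(R[X])$, and by the Local--Global Principle for $\mathrm{EO}_{2n+1}$ (valid for $n\ge 3$ and obtained from a dilation/excision lemma) this follows once $\alpha'_{\mathfrak m}\in \mathrm{EO}_{2n+1}(R_{\mathfrak m}[X])$ for every maximal ideal $\mathfrak m$. Granting the theorem over the local ring $R_{\mathfrak m}$ (applied to $\alpha'_{\mathfrak m},\beta'_{\mathfrak m}$), we get $\alpha'_{\mathfrak m}=c\,\varepsilon$ with $c\in \mathrm{O}_{2n+1}(R_{\mathfrak m})$, $\varepsilon\in \mathrm{EO}_{2n+1}(R_{\mathfrak m}[X])$; evaluation at $X=0$ forces $c=\varepsilon(0)^{-1}\in \mathrm{EO}_{2n+1}(R_{\mathfrak m})$, so $\alpha'_{\mathfrak m}\in \mathrm{EO}_{2n+1}(R_{\mathfrak m}[X])$, as required. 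The conclusion for $\beta$ then follows from that for $\alpha$ under the substitution $X\mapsto X^{-1}$, which swaps $R[X]$ and $R[X^{-1}]$, fixes the form and $\mathrm{O}_{2n+1}(R)$, and sends $\alpha\beta^{-1}$ to its inverse.

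So assume $R$ is local and put $\gamma:=\alpha\beta^{-1}\in \mathrm{EO}_{2n+1}(R[X,X^{-1}])$. I claim it is enough to produce $e\in \mathrm{EO}_{2n+1}(R[X])$ and $f\in \mathrm{EO}_{2n+1}(R[X^{-1}])$ with $e\gamma f\in \mathrm{O}_{2n+1}(R)$: writing $c:=e\gamma f$, one has $e\alpha=(e\gamma f)(f^{-1}\beta)=c\,(f^{-1}\beta)\in \mathrm{O}_{2n+1}(R[X])\cap \mathrm{O}_{2n+1}(R[X^{-1}])=\mathrm{O}_{2n+1}(R)$, whence $\alpha=e^{-1}(e\alpha)\in \mathrm{EO}_{2n+1}(R[X])\cdot \mathrm{O}_{2n+1}(R)=\mathrm{O}_{2n+1}(R)\cdot \mathrm{EO}_{2n+1}(R[X])$ by normality of $\mathrm{EO}_{2n+1}$. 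Thus we are free to modify $\gamma$ by $\mathrm{EO}_{2n+1}(R[X])$ on the left and $\mathrm{EO}_{2n+1}(R[X^{-1}])$ on the right and must reach an element of $\mathrm{O}_{2n+1}(R)$; one may also arrange $\beta|_{X^{-1}=0}=I$ to start.

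To carry this out I would argue along the lines of Suslin's original proof. Using the Calmès--Fasel generators, write $\gamma$ as a product of elementary orthogonal generators with parameters in $R[X,X^{-1}]$; since each generator is additive in its parameter, splitting every parameter into its polynomial part (in $R[X]$) and its polar part (in $X^{-1}R[X^{-1}]$) presents $\gamma$ as an interleaved product of factors from $\mathrm{EO}_{2n+1}(R[X])$ and $\mathrm{EO}_{2n+1}(R[X^{-1}])$. One then uses the Calmès--Fasel commutator relations --- this is exactly where $n\ge 3$ is needed, to supply enough free indices --- to push all the $R[X^{-1}]$-factors to the right past the $R[X]$-factors. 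The commutators produce cross-terms whose parameters a priori lie only in $R[X,X^{-1}]$, and the real content is that these can be absorbed: I would induct on a complexity measuring how far $\gamma$ and $\gamma^{-1}$ are from having entries in $R$ (e.g.\ the span of exponents of $X$ occurring in them), showing that after one left multiplication by a suitable element of $\mathrm{EO}_{2n+1}(R[X])$ and one right multiplication by a suitable element of $\mathrm{EO}_{2n+1}(R[X^{-1}])$ this complexity strictly drops, the terminal case being $\gamma\in \mathrm{O}_{2n+1}(R)$. Invertibility of $2$ enters precisely here --- to split the long-root generators symmetrically and to realise the orthogonal transvections clearing the leading terms --- and locality of $R$ supplies the units those clearing moves require; the even orthogonal case of Suslin and Kopeiko furnishes the template for the bookkeeping.

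The hard part is this last step: verifying that the cross-terms produced by the commutator identities among the odd-orthogonal generators (whose short-root/long-root structure genuinely differs from that of $\mathrm{EO}_{2n}$) can always be driven to strictly smaller complexity, so the induction closes. Choosing the right complexity function and performing the commutator calculations for $\mathrm{EO}_{2n+1}$ is where the proof really lives; by contrast the Local--Global Principle, the dilation lemma and the reductions in the first two paragraphs are essentially formal once the elementary-group calculus for $\mathrm{EO}_{2n+1}$ is in place. (One might hope to deduce the result from the even case through the embedding $\mathrm{O}_{2n+1}\hookrightarrow \mathrm{O}_{2n+2}$, but passing from the $(2n+2)$-conclusion back to a $(2n+1)$-conclusion requires a $K_1$-stability comparison $\mathrm{EO}_{2n+1}(A)=\mathrm{O}_{2n+1}(A)\cap \mathrm{EO}_{2n+2}(A)$ that is not available for arbitrary $A$, which is why it is cleaner to work intrinsically with the Calmès--Fasel generators.) One should also verify that $\mathrm{EO}_{2n+1}$-normality, the Local--Global Principle and the dilation lemma all hold over $R[X]$, $R[X^{-1}]$ and $R[X,X^{-1}]$ for every commutative $R$ in which $2$ is invertible and $n\ge 3$, not merely under extra finiteness hypotheses on $R$.
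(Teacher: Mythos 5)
Your outer reductions are sound and agree with the paper's: localisation via the Local--Global Principle, the observation that it suffices to write $\gamma=\alpha\beta^{-1}$ as a product of an element of $\mathrm{EO}_{2n+1}(R[X])$, an element of $\mathrm{O}_{2n+1}(R)$, and an element of $\mathrm{EO}_{2n+1}(R[X^{-1}])$, and the final step $e\alpha=c\,f^{-1}\beta\in \mathrm{O}_{2n+1}(R[X])\cap\mathrm{O}_{2n+1}(R[X^{-1}])=\mathrm{O}_{2n+1}(R)$. But the entire content of the theorem lives in the splitting step, and there you have only a programme, not a proof. Two things are missing. First, before any splitting is attempted one must reduce modulo the maximal ideal: using the decomposition $\mathrm{O}_{2n+1}(C)=\mathrm{O}_2(C)\cdot\mathrm{EO}_{2n+1}(C)$ over the residue field $k$ together with a lifting statement for monomial matrices, the paper arranges $\phi(\alpha)=\phi(\beta)=I_{2n+1}$, so that the element to be split lies in $G=\mathrm{EO}_{2n+1}(R[X,X^{-1}])\cap\mathrm{O}_{2n+1}(\mathfrak m[X,X^{-1}])$. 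The splitting $G=G_+\cdot G_-$ is only proved (and is only plausible) under this congruence condition; your sketch tries to split an arbitrary element of $\mathrm{EO}_{2n+1}(R[X,X^{-1}])$, which is a different and harder statement, and your passing remark that one ``may arrange $\beta|_{X^{-1}=0}=I$'' does not substitute for this residue-field reduction.

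Second, your proposed mechanism for the splitting --- write $\gamma$ as a word in the Calm\`es--Fasel generators, separate each parameter into its $R[X]$- and $X^{-1}R[X^{-1}]$-parts, and push the minus-factors to the right while controlling cross-terms by induction on an unspecified ``complexity'' --- is not carried out, and there is no reason to believe it closes: each commutation of an $R[X]$-generator past an $R[X^{-1}]$-generator produces new generators with parameters in $R[X,X^{-1}]$ that must themselves be re-split, and you exhibit no complexity function that strictly decreases under these moves. The paper avoids this entirely. It constructs the subgroups $\mathrm{TO}_{2n+1}$ and $\mathrm{TO}'_{2n+1}$, proves the Bruhat-type decomposition $\mathrm{O}_{2n+1}(k)=\mathrm{TO}_{2n+1}(k)\cdot\mathrm{MO}_{2n+1}(k)\cdot\mathrm{TO}_{2n+1}(k)$, introduces the set $V=V_1V_2V_3V_4$, shows $V$ is stable under conjugation by $\theta_{n+1}$ and contains $\mathrm{EO}_{2n+1}(R[X,X^{-1}])$, and then extracts the factorisation $\alpha=\alpha_+\cdot\alpha_-$ through the multiplicatively closed sets $H$ and $K$ (Theorems 5.11 and 6.5). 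None of that apparatus appears in your proposal, so the argument has a genuine gap exactly where, as you yourself note, ``the proof really lives.''
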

   We prove the odd orthogonal analogue of Horrocks's theorem extending the classical result to the setting of odd orthogonal groups. Proving this result is particularly challenging because of the complexity of the computations and structure. The computer algebra system GAP (Groups, Algorithms, Programming) played a crucial role in performing further calculations and lead us to define specific subgroups of the odd orthogonal group, such as ${\rm TO}_{2n+1}(R)$ and ${\rm TO}'_{2n+1}(R).$ Certain results that hold trivially in the even orthogonal group require deeper analysis in the odd orthogonal group. This article discusses computational strategies, and explores the structural distinction between the even and odd orthogonal groups.

   \section{Preliminaries}
Let $R$ denote a commutative ring with unity in which 2 is invertible. Let ${\rm M}_n(R)$ denote the set of all $n\times n$ matrices with entries in $R$ and let 
${\rm GL}_n(R)$ denote the group of all $n \times n$ invertible matrices with entries in $R$. If $I$ is an ideal of a ring $R$, then ${\rm M}_{n,r}(I)$ denotes the set of all $n\times r$ matrices with entries from $I$. Let ${\rm Alt}_n(R)$ denote the subset of ${\rm M}_n(R)$ consisting of the matrices $\alpha$ such that $\alpha_{ii}=0$ and $\alpha_{ij}=-\alpha_{ji}$ if $i\neq j$. The subgroup ${\rm GL}_n(I)$ of the group ${\rm GL}_n(R)$, consists of matrices of the form $I_n+\alpha$, where $\alpha \in {\rm M}_{n}(I)$. For $\alpha$, $\beta\in {\rm M}_n(R)$, we follow the notation $[\alpha]\perp[\beta]$ and $[\alpha]\top[\beta]$ for $\begin{pmatrix}
    \alpha&0\\0&\beta
\end{pmatrix}$ and $\begin{pmatrix}
    0&\alpha\\\beta&0
\end{pmatrix}$ respectively.

\begin{definition}[{\emph {Quadratic form}}]A quadratic form on an $R$-module $M$ is a map $q : M \rightarrow R$ such that
        \begin{enumerate}[label=(\roman*)]
         \item $q(ax) = a^2 q(x)$, $\forall a \in R$, $x \in M$
        \item  The map $B_q : M \times M \rightarrow R$ defined by
$B_q(x, y) = q(x + y) - q(x) - q(y)$ is bilinear.
 \end{enumerate} 
 \end{definition}
The map $B_q$ is called the bilinear form associated with $q$.
If 2 is invertible, then $q(x)=\frac{1}{2}
B_q(x, x)$, for all $x \in M$. The pair $(M, q)$ is called the quadratic
$R$-module. A quadratic form $q$ is said to be non-degenerate if the associated bilinear form  $B_q$ is non-degenerate. That is, if the mapping from $M$ to its dual  $M^*$ defined by $x\mapsto B_q(x,y)$ is an isomorphism. A quadratic space over $R$ is a pair $(M,q)$, where $M$ is a finitely generated projective $R$-module and $q : M \rightarrow R$ is a non-degenerate quadratic form. Let $(M_1, q_1)$ and $(M_2, q_2)$ be two quadratic $R$-modules, then the orthogonal sum of these two quadratic $R$-modules is defined as $(M_1, q_1) \perp (M_2, q_2) = (M_1 \oplus M_2, q)$; 
where $q(x_1 + x_2) = q_1(x_1) + q_2(x_2)$.
\begin{definition}[{\emph{Hyperbolic Space}}]  Let $P$ be a finitely generated projective $R$-module. Then the hyperbolic space $\mathbb{H}(P)$ is the quadratic space $P\oplus P^*$ with the bilinear form $\langle(x_1,f_1),(x_2,f_2)\rangle :=f_2(x_1)+f_1(x_2)$, for $x_1,x_2\in P$ and $f_1,f_2 \in P^*$, where $P^*$ is the dual space of $P$.
\end{definition}

Let $(M_1, q_1)$ and $(M_2, q_2)$ be two quadratic $R$-spaces. We call $\alpha:M_1\rightarrow M_2$ an {\it isometry}, if $q_1(m)=q_2(\alpha(m)).$ For a quadratic space $(M,q)$, let ${\rm O}(M,q)$ denote the group of all isometries of $(M,q)$ onto itself and let $\tilde{O}(M,q)$ denote the group of all linear automorphisms of $M$, sending the form $q$ into the form $a\cdot q$ for an invertible $a\in R.$ If $M$ is an $R$- module and $v\in M$, we put $o(v)=\{ \phi(v): \phi\in {\rm Hom}_R(M,R)\}$, which is an ideal of $R$ called the {\it order ideal}. 

\vspace{1mm}

If $(M,q)$ is a quadratic $R$-space and let $u,v\in M$. If $\phi$  is the bilinear form associated to $q$ such that $q(u)$, $\phi(u,v)=0$. For $a\in R$, consider the map ${\rm E}_{u,v}^o(a)$ given by,
$${\rm E}_{u,v}^o(a)(w)=w+(u\cdot\phi(v,w)-v\cdot\phi(u,w))\cdot a-u\cdot a^2\cdot\phi(u,w)\cdot q(v).$$
 For a subset $V$ of $M$, we define $V^\perp=\{m\in M : \phi(u,m)=0\ \forall\ u\in V\}$.

\begin{lemma}[{\rm \cite[Lemma~1.7]{SusKop1977}}]\label{Pre2.3}The maps ${\rm E}_{u,v}^o(a)$ satisfies the following.
   \begin{enumerate}[label={\emph{(\roman*)}}]
       \item ${\rm E}_{u,v}^o(a)\in {\rm O}(M,q)$ and ${\rm E}_{u,u}^o(a)=1_M$.
       \item ${\rm E}_{u,v}^o(a)={\rm E}_{u\cdot a,v}^o(1)={\rm E}_{u,v\cdot a}^o(1)$.
       \item The mapping $v\mapsto {\rm E}_{u,v}^o$ is a homomorphism of the additive group of $v^\perp$ into ${\rm O}(M,q)$, where $u$ is isotropic.
       \item If $L$ is totally isotropic subspace of $M$, then the mapping $(u,v)\mapsto {\rm E}_{u,v}^o$ of $L\times L$ into ${\rm O}(M,q)$ is alternating and bilinear, and if $L\subset w^\perp $ for some $w\in M$, then for $u,v\in L$, we have ${\rm E}_{u,v}^o\cdot {\rm E}_{v,w}^o={\rm E}_{u+v,w}^o\cdot {\rm E}_{u,v}^o( q(w))$.
       \item If $\alpha\in \tilde{{\rm O}}(M,q)$ and $\alpha$ sends $q$ to $a\cdot q$, then $\alpha\cdot {\rm E}_{u,v}^o(b)\cdot \alpha^{-1}={\rm E}_{\alpha u,\alpha v}^o(a^{-1}b).$
   \end{enumerate} 
\end{lemma}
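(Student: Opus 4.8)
The plan is to verify all five assertions by direct computation from the defining formula
$${\rm E}_{u,v}^o(a)(w)=w+a\bigl(u\cdot\phi(v,w)-v\cdot\phi(u,w)\bigr)-u\cdot a^2\phi(u,w)q(v),$$
using only the standing hypotheses $q(u)=0$ (so $\phi(u,u)=2q(u)=0$) and $\phi(u,v)=0$, together with bilinearity of $\phi$ and the identities $q(x+y)=q(x)+q(y)+\phi(x,y)$ and $q(ax)=a^2q(x)$. For \emph{(i)} I would write ${\rm E}_{u,v}^o(a)=1_M+T$ with $T(w)=a\bigl(u\phi(v,w)-v\phi(u,w)\bigr)-u\,a^2\phi(u,w)q(v)$ and compute $q(w+Tw)=q(w)+\phi(w,Tw)+q(Tw)$; expanding $\phi(w,Tw)$ and $q(Tw)$ and cancelling with $q(u)=0$, $\phi(u,u)=0$, $\phi(u,v)=0$ leaves precisely $q(w)$, so ${\rm E}_{u,v}^o(a)$ preserves $q$. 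That it lies in ${\rm O}(M,q)$, i.e.\ is bijective, then follows once \emph{(ii)} and \emph{(iii)} are in hand, since together they exhibit ${\rm E}_{u,v}^o(-a)$ as a two-sided inverse; and ${\rm E}_{u,u}^o(a)=1_M$ is immediate, the linear correction cancelling identically and the quadratic one vanishing because $q(u)=0$. For \emph{(ii)}, substituting $u\cdot a$ (resp.\ $v\cdot a$) for the first (resp.\ second) argument in the $a=1$ formula preserves the hypotheses---$q(u\cdot a)=a^2q(u)=0$ and $\phi(u\cdot a,v)=0$, resp.\ $q(v\cdot a)=a^2q(v)$ and $\phi(u,v\cdot a)=0$---and reproduces ${\rm E}_{u,v}^o(a)$ on the nose.

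For \emph{(iii)}---where, for the transformations involved to be defined, ``$v^\perp$'' is to be read as $u^\perp$---I would compose $w\mapsto w':={\rm E}_{u,v_2}^o(w)$ with ${\rm E}_{u,v_1}^o$. The key intermediate identities are $\phi(u,w')=\phi(u,w)$ (from $\phi(u,u)=0$ and $\phi(u,v_2)=0$) and $\phi(v_1,w')=\phi(v_1,w)-\phi(u,w)\phi(v_1,v_2)$ (from $\phi(v_1,u)=0$); feeding these into the formula for ${\rm E}_{u,v_1}^o(w')$ and grouping the coefficient of $u$, the coefficient of $v_1+v_2$, and the scalar $q(v_1)+q(v_2)+\phi(v_1,v_2)=q(v_1+v_2)$ gives exactly ${\rm E}_{u,v_1+v_2}^o(w)$. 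For \emph{(v)} I would first observe that $q(\alpha m)=a\,q(m)$ forces $\phi(\alpha m,\alpha m')=a\,\phi(m,m')$, so $q(\alpha u)=0$ and $\phi(\alpha u,\alpha v)=0$ and ${\rm E}_{\alpha u,\alpha v}^o(\cdot)$ is defined; then, for $x\in M$ with $y=\alpha^{-1}x$, push $\alpha$ through ${\rm E}_{u,v}^o(b)(y)$ term by term and rewrite $\phi(v,y)=a^{-1}\phi(\alpha v,x)$, $\phi(u,y)=a^{-1}\phi(\alpha u,x)$, $q(v)=a^{-1}q(\alpha v)$; the accumulated factors of $a^{-1}$ produce $a^{-1}b$ in the linear part and $(a^{-1}b)^2$ in the quadratic part, which is ${\rm E}_{\alpha u,\alpha v}^o(a^{-1}b)(x)$.

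Part \emph{(iv)} carries the bulk of the work. On a totally isotropic $L$ one has $q(u)=q(v)=\phi(u,v)=0$ for all $u,v\in L$, so the formula collapses to ${\rm E}_{u,v}^o(w)=w+u\cdot\phi(v,w)-v\cdot\phi(u,w)$; writing this as $1_M+T_{u,v}$ and noting that every scalar occurring in $T_{u,v}T_{u',v'}$ is a value of $\phi$ on two elements of $L$, we get $T_{u,v}T_{u',v'}=0$, hence $(1_M+T_{u,v})(1_M+T_{u',v'})=1_M+T_{u,v}+T_{u',v'}$, so $(u,v)\mapsto{\rm E}_{u,v}^o$ is bilinear as a map $L\times L\to{\rm O}(M,q)$ and alternating because $T_{u,u}=0$. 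For the remaining commutation identity, with $L\subseteq w^\perp$ (so $\phi(u,w)=\phi(v,w)=0$ for $u,v\in L$, while $q(w)$ may be nonzero), I would expand both products on a general vector $x$, using the collapsed formula for the factors with both arguments in $L$ and the full formula---retaining the $q(w)$-term---for the factors whose second argument is $w$; the two sides then coincide after collecting the coefficients of $u$, $v$, $w$ and $x$, the surviving quadratic discrepancy being exactly what ${\rm E}_{u,v}^o(q(w))$ supplies. I expect this last identity to be the main obstacle: it is the one part in which a vector $w$ with possibly nonzero $q(w)$ intervenes, so one must be scrupulous about which hypothesis---total isotropy of $L$, $L\subseteq w^\perp$, or $q(u)=0$---licenses each cancellation, as well as about composition conventions and the exact placement of subscripts. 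None of the computations involves the rank of $M$, so the lemma applies unchanged in the odd orthogonal setting used in the sequel.
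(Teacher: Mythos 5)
The paper itself offers no argument for this lemma---it is quoted from \cite[Lemma~1.7]{SusKop1977} without proof---so your plan of verifying everything directly from the defining formula is a legitimate, self-contained route. Your treatments of (i), (ii), (iii) (with ``$v^\perp$'' read as $u^\perp$, as you correctly note), (v), and the bilinearity/alternation part of (iv) are sound as sketched; in particular, obtaining the inverse ${\rm E}^o_{u,v}(-a)$ from (ii) and (iii) and then concluding bijectivity in (i) involves no circularity, since your proof of (iii) is a pure composition identity.

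The genuine gap is exactly where you flagged the ``main obstacle'' but nevertheless asserted that the two sides coincide: the last relation in (iv). As printed, ${\rm E}^o_{u,v}\cdot{\rm E}^o_{v,w}={\rm E}^o_{u+v,w}\cdot{\rm E}^o_{u,v}(q(w))$ is false under either composition convention. Indeed, take $x$ with $\phi(w,x)=1$ and $\phi(u,x)=\phi(v,x)=0$: since $u,v\in L\subset w^{\perp}$ and $L$ is totally isotropic, all the relevant values of $\phi$ on pairs from $\{u,v\}$ vanish and are unchanged by the transvections involved, so the left side sends $x\mapsto x+v$ while the right side sends $x\mapsto x+u+v$. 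The relation that actually holds (and is what the cited Lemma~1.7 asserts) has both left-hand factors with second entry $w$, namely ${\rm E}^o_{u,w}\cdot{\rm E}^o_{v,w}={\rm E}^o_{u+v,w}\cdot{\rm E}^o_{u,v}(\pm q(w))$, the sign depending on whether products are composed left-to-right or right-to-left; it records the failure of additivity of $u\mapsto{\rm E}^o_{u,w}$ when $q(w)\neq0$. Once the subscript is repaired and a composition convention fixed, the expansion you outline does close, the key identities being $\phi(u,{\rm E}^o_{v,w}(x))=\phi(u,x)$ and $\phi(w,{\rm E}^o_{v,w}(x))=\phi(w,x)-2q(w)\phi(v,x)$, together with the invariance of $\phi(u,\cdot)$ and $\phi(v,\cdot)$ under ${\rm E}^o_{u+v,w}$ and ${\rm E}^o_{u,v}(q(w))$. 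So your proposal cannot be completed for the statement as written; you must first correct the displayed relation (a transcription slip from the source) and only then carry out the verification, which is otherwise routine.
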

        \begin{definition}[{\emph{Orthogonal group}}] 
        The orthogonal group {${\rm O}_{n}(R)$} with respect to an invertible symmetric matrix $\phi$ is defined to be the collection $\{\alpha\in {\rm GL}_{n}(R):\alpha^t\phi \alpha=\phi\}.$ Here, we consider the  symmetric matrix, $\phi=\begin{cases}
             \widetilde{\psi}_r ~&{\rm if}~ n=2r\\
             2\perp \widetilde{\psi}_r &{\rm if} ~n=2r+1
         \end{cases}$, where $\widetilde{\psi}_r=\sum_{i=1}^{r}e_{i,r+i}+\sum_{i=1}^{r}e_{r+i,i}.$
        \end{definition}
\subsection{Elementary orthogonal group}
        \begin{definition}\label{Pre2.5}[{\emph{Even elementary orthogonal group}}] The even elementary orthogonal group, {${\rm EO}_{2n}(R)$} is generated by matrices of the form
         $oe_{ij}(z)=I_{2n}+e_{ij}(z)-e_{\delta(j)\delta(i)}(z)$,
        where $1\leq i,j \leq 2n$, $i\neq j$, and  
            $\delta(i)=\begin{cases}
            i+n~&{\rm if}~1\leq i\leq n\\i-n~&{\rm if}~n+1\leq i\leq 2n
        \end{cases}$, for $z\in R$.
        
        The group ${\rm EO}_{2n}(R)$ is a subgroup of ${\rm O}_{2n}(R).$
        \end{definition}
         \begin{remark}
         If $1\leq i\neq j\leq 2n$ and $z\in R$, then the generators, $oe_{ij}(z)={\rm E}_{e_i,e_{\delta(j)}}(z)$, where $\{e_1,\ldots,e_{2n}\}$ is the standard basis of $R^{2n}.$
         \end{remark}     

     \begin{definition}$(${\emph{Odd elementary orthogonal group} \cite{Fasel2015}}$)$  The odd elementary orthogonal group, ${\rm {EO}}_{2n+1}(R)$ is generated by the matrices of the following types: For $1\leq i\neq j\leq n$ and $z\in R$, 
 \begin{enumerate}[label=(\roman*)]
   \item $F^1_i(z)= I_{2n+1}+e_{1,n+i+1}(z)-e_{i+1,1}(2z)-e_{i+1,n+i+1}(z^2)$,
  \item $F^2_i(z)= I_{2n+1}+e_{1,i+1}(z)-e_{n+i+1,1}(2z)-e_{n+i+1,i+1}(z^2)$,
    \item $F^3_{i, j}(z)= I_{2n+1}+e_{i+1,j+1}(z)-e_{n+j+1,n+i+1}(z)$,
    \item $F^4_{i, j}(z)= I_{2n+1}+e_{i+1,n+j+1}(z)-e_{j+1,n+i+1}(z)$,
     \item $F^5_{i, j}(z)= I_{2n+1}+e_{n+i+1,j+1}(z)-e_{n+j+1,i+1}(z)$.
\end{enumerate}\end{definition}
  These matrices satisfy the relations:
 $F^3_{i, j}(z) = [F^1_i(z),F^2_j(-\frac{1}{2})]$, $F^4_{i, j}(z) = [F^1_j(z),F^1_i(\frac{1}{2})] $, $F^5_{i, j}(z) = [F^2 _j(z),F^2_i(\frac{1}{2})]$. Hence, we have the following lemma.
   \begin{lemma}
       The group ${\rm {EO}}_{2n+1}(R)$ is generated by matrices of the form $F_{i}^1(z)$ and $F_i^2(z)$, where $1 \leq i \leq n$, for $z\in R$.
   \end{lemma}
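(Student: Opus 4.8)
The plan is to show that the subgroup $G \leq {\rm EO}_{2n+1}(R)$ generated by the matrices $F^1_i(z)$ and $F^2_i(z)$, for $1\leq i\leq n$ and $z\in R$, already contains every generator appearing in the definition of ${\rm EO}_{2n+1}(R)$. Since $G$ is by construction a subgroup of ${\rm EO}_{2n+1}(R)$, this yields $G={\rm EO}_{2n+1}(R)$, which is exactly the assertion.

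Thus the only point to check is that $F^3_{i,j}(z)$, $F^4_{i,j}(z)$, and $F^5_{i,j}(z)$ lie in $G$ for all admissible indices $1\leq i\neq j\leq n$ and all $z\in R$. This is precisely what the three displayed commutator identities give:
\[
F^3_{i,j}(z)=\bigl[F^1_i(z),F^2_j(-\tfrac12)\bigr],\qquad F^4_{i,j}(z)=\bigl[F^1_j(z),F^1_i(\tfrac12)\bigr],\qquad F^5_{i,j}(z)=\bigl[F^2_j(z),F^2_i(\tfrac12)\bigr].
\]
In each case the right-hand side is a commutator of two of the chosen generators of $G$, hence lies in $G$. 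Here one uses the standing hypothesis that $2$ is invertible in $R$, so that $\tfrac12,-\tfrac12\in R$ and $F^1_i(\tfrac12)$, $F^2_i(\tfrac12)$, $F^2_j(-\tfrac12)$ are legitimate generators. One may also note that for $i=j$ the matrices $F^4_{i,i}(z)$ and $F^5_{i,i}(z)$ are trivial, since their two elementary blocks cancel, so restricting to $i\neq j$ loses nothing.

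It remains to justify the three commutator identities, which is a routine matrix computation: one expands each $[\alpha,\beta]=\alpha\beta\alpha^{-1}\beta^{-1}$ from the explicit forms of $F^1_i$ and $F^2_i$ and collects terms, the quadratic contributions $e_{i+1,n+i+1}(z^2)$ and $e_{n+i+1,i+1}(z^2)$ in $F^1_i$ and $F^2_i$ requiring the most care. I expect no conceptual obstacle here; the only labour is the bookkeeping, for which (as the authors note) the computer algebra system GAP is well suited, and in any event these relations are already recorded in \cite{Fasel2015}. Granting the identities, the lemma follows at once from the two preceding paragraphs.
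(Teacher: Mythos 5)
Your proposal is correct and follows essentially the same route as the paper: the authors likewise deduce the lemma directly from the displayed commutator identities expressing $F^3_{i,j}(z)$, $F^4_{i,j}(z)$, and $F^5_{i,j}(z)$ in terms of $F^1_i$ and $F^2_i$, using the invertibility of $2$. Your additional remarks on verifying the identities and on the degenerate case $i=j$ only make explicit what the paper leaves implicit.
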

   
  \begin{remark}
      The group ${\rm EO}_{2n+1}(R)$ is a subgroup of ${\rm O}_{2n+1}(R).$
   In addition, for $1\leq i\neq j\leq n$ and $z\in R$, we have $F_i^1(z)={\rm E}_{e_{i+1},e_1(-1)}^o(z)$, $F_i^2(z)={\rm E}_{e_{n+i+1},e_1(-1)}^o(z)$,
  where $\{e_1,\ldots,e_{2n+1}\}$ is the standard basis of $R^{2n+1}.$
  \end{remark}

\begin{definition}
       For any natural number, $n$ we have a \emph{canonical embedding ${\rm O}_{2n+1}(R)\rightarrow {\rm O}_{2n+3}(R)$}, defined by
       $$ \alpha=\begin{pmatrix}
           \alpha_{11}&\alpha_{12}&\alpha_{13}\\
           \alpha_{21}&\alpha_{22}&\alpha_{23}\\
           \alpha_{31}&\alpha_{32}&\alpha_{33}
           \end{pmatrix}          
           \mapsto
           \begin{pmatrix}
              \alpha_{11}&\alpha_{12}&0&\alpha_{13}&0\\
           \alpha_{21}&\alpha_{22}&O&\alpha_{23}&O\\
           0&O&1&O&0\\
           \alpha_{31}&\alpha_{32}&O&\alpha_{33}&O\\
           0&O&0&O&1
        \end{pmatrix},$$
         where $\alpha_{11}\in \mathrm{M}_{1,1}(R)$, $\alpha_{22},\alpha_{23},\alpha_{32}$, and $\alpha_{33}\in \mathrm{M}_{n,n}(R)$, $\alpha_{12}, \alpha_{13}\in \mathrm{M}_{1,n}(R)$, and $\alpha_{21}, \alpha_{31}\in \mathrm{M}_{n,1}(R)$. Under this embedding, the groups ${\rm EO}_{2n+1}(R)$, ${\rm {O}}_{2n+1}(I)$, ${\rm EO}_{2n+1}(I)$ and ${\rm EO}_{2n+1}(R,I)$ maps into ${\rm EO}_{2n+3}(R)$, ${\rm O}_{2n+3}(I)$, ${\rm EO}_{2n+3}(I)$ and ${\rm EO}_{2n+3}(R,I)$ respectively.
         \end{definition} 
         
         We can embed a matrix $\alpha$ of even dimension into an odd dimensional group by taking $1\perp \alpha.$ 
         
\begin{definition}
The factor group ${\rm KO}_1(R)$ is defined as ${\rm KO}_1(R):=\frac{{\rm O}(R)}{{\rm EO}(R)}$, where ${\rm O}(R)=\displaystyle\lim_{n\to\infty}{\rm O}_{2n+1}(R)$\\[5pt]and ${\rm EO}(R)=\displaystyle\lim_{n\to\infty}{\rm EO}_{2n+1}(R)$. For the relative group ${\rm EO}(R,I)$, define ${\rm KO}_1(R,I):={\rm O}(R)/{\rm EO}(R,I)$, where ${\rm EO}(R,I)=\displaystyle\lim_{n\to\infty}{\rm EO}_{2n+1}(R,I)$.
\end{definition}
      
\subsection {\bf Subgroups of $ {\rm EO}_{2n+1}(R)$ and ${\rm O}_{2n+1}(R)$}

\begin{itemize}   
\item ${\rm O}_{2n+1}(I)={\rm O}_{2n+1}(R)\cap {\rm GL}_{2n+1}(I)={\rm ker }({\rm O}_{2n+1}(R)\rightarrow {\rm O}_{2n+1}(R/I)).$
 
        \item ${\rm EO}_{2n+1}(I)$ denotes the subgroup of ${\rm O}_{2n+1}(R)$, generated by all $F_i^1(z)$ and $F_i^2(z)$ with $z\in I$.
          
       \item  ${\rm EO}_{2n+1}(R,I)$ denotes the subgroup of ${\rm O}_{2n+1}(R)$, generated by all $\alpha\cdot F^1_i(z)\cdot \alpha^{-1}$ and $\alpha\cdot F^2_i(z)\cdot \alpha^{-1}$ with $\alpha\in\ {\rm EO}_{2n+1}(R),\ z\in I$.

   \item Let ${\rm DO}_{2n+ 1}(R)$ be the subgroup of ${\rm O}_{2n+1}(R)$, consisting of the diagonal matrices. Observe that ${diag}(d_0,d_1,\dots ,d_{2n})$ lies in ${\rm O}_{2n+1}(R)$ if and only if $d_{\delta(i)}=d_i^{-1}$, for $1\leq i\leq n$ and ${d_0}^{-1}=d_0$.    
   \item If $\pi$ is a permutation of the set $\{1,2,\ldots,2n,2n+1\}$, we denote the corresponding permutation matrix by $\sigma_\pi$. The matrix $\sigma_\pi$ is orthogonal if and only if $\pi$ commutes with $\delta$ (Definition~\ref{Pre2.5}). Let ${\rm \Pi O}_{2n+1}(R)$ denote the subgroup of ${\rm O}_{2n+1}(R)$ that consists of these permutation matrices. That is, ${\rm \Pi O}_{2n+1}(R) = \{ \sigma_\pi : \pi\cdot\delta=\delta\cdot\pi\}.$
  \item ${\rm MO}_{2n+1}(R)$ denotes the subgroup of ${\rm O}_{2n+1}(R)$ consisting of the monomial matrices.
\end{itemize}

\section{Construction of ${\rm TO}_{2n+1}(R)$ and ${\rm TO'}_{2n+1}(R)$}
In \cite{SusKop1977}, A. A. Suslin and V. I. Kopeiko constructed the subgroups ${\rm TO}_{2n}(R)$ and ${\rm TO'}_{2n}(R)$ of the even elementary orthogonal group ${\rm EO}_{2n}(R)$ to prove some important splitting results leading to Horrocks' theorem. In this section, we provide the detailed construction of the odd orthogonal analogues, ${\rm TO}_{2n+1}(R)$ and ${\rm TO'}_{2n+1}(R)$ using GAP calculations.

\subsection{Generators for ${\rm TO}_6$}
An element in ${\rm TO}_{2n}(R)$ is given by $\begin{pmatrix}
    \gamma & \delta \\
    O & (\gamma^T)^{-1}
\end{pmatrix}$, where $\gamma$ is an upper triangular matrix with $1$'s on the main diagonal such that $\gamma^{-1}\cdot\delta \in {{\rm Alt}}_n(R)$. For example, an arbitrary element in ${\rm TO}_6(R)$ is of the form, 
\begin{align*}
    A=&\begin{pmatrix}
    \gamma & \delta \\
    O & (\gamma^T)^{-1}
\end{pmatrix} =
\begin{pmatrix}
    1 & a & b & b\cdot q+a\cdot p-a\cdot c\cdot q & b \cdot r + c \cdot q - p & -a \cdot r -q \\
    0 & 1 & c & p & c \cdot r & -r \\
    0 & 0 & 1 & q & r & 0 \\
    0 & 0 & 0 & 1 & 0 & 0 \\
    0 & 0 & 0 & -a & 1 & 0 \\
    0 & 0 & 0 & ac-b & -c & 1
\end{pmatrix}. \end{align*} The matrix $A$ can be written as, 
\begin{align*}
 A&=\begin{pmatrix}
    \gamma & O \\
    O & (\gamma^T)^{-1}
\end{pmatrix}\cdot \begin{pmatrix}
    I_n &\gamma^{-1}\cdot \delta \\
    O &I_n
\end{pmatrix} \\[8pt] &=\begin{pmatrix}
    1&a&b&0&0&0\\
    0&1&c&0&0&0\\
    0&0&1&0&0&0\\
    0&0&0&1&0&0\\
    0&0&0&-a&1&0\\
    0&0&0&ac-b&-c&1
\end{pmatrix} \cdot \begin{pmatrix}
    1&0&0&0&cq-p&-q\\
    0&1&0&-cq+p&0&-r\\
    0&0&1&q&r&0\\
    0&0&0&1&0&0\\
    0&0&0&0&1&0\\
    0&0&0&0&0&1
\end{pmatrix}\\[5pt] 
&={oe_{13}(b)\cdot oe_{23}(c)\cdot oe_{12}(a)\cdot oe_{15}(cq-p)\cdot oe_{16}(-q)\cdot oe_{26}(-r)}.
\end{align*}
\begin{remark}
    ${\rm TO}_{6}(R)$ is generated by the matrices $oe_{12}(z)$, $oe_{13}(z)$, $oe_{15}(z)$, $oe_{16}(z)$, $oe_{23}(z)$, and $oe_{26}(z)$ with $z\in R.$
\end{remark}
\subsection{Generators for ${\rm TO}_{2n+1}(R)$}
 Consider the following matrices obtained by the six generators of ${\rm TO}_{6}(R)$:
\begin{enumerate}[label=\roman*)]
    \item $1\perp oe_{12}(z)=F_{12}^3(z)=[F_1^1(z),F_2^2(-\frac{1}{2})]$,
    \item $1\perp oe_{13}(z)=F_{13}^3(z)=[F_1^1(z),F_3^2(-\frac{1}{2})]$,
    \item $1\perp oe_{15}(z)=F_{12}^4(z)=[F_2^1(z),F_1^1(\frac{1}{2})]$,
    \item $1\perp oe_{16}(z)=F_{13}^4(z)=[F_3^1(z),F_1^1(\frac{1}{2})]$,
    \item $1\perp oe_{23}(z)=F_{23}^3(z)=[F_2^1(z),F_3^2(-\frac{1}{2})]$,
    \item $1\perp oe_{26}(z)=F_{23}^4(z)=[F_3^1(z),F_2^1(\frac{1}{2})].$
\end{enumerate}
From the commutator relations derived above, it is clear that the following matrices are sufficient to generate ${\rm TO}_7(R)$,
\begin{enumerate}[label=\roman*)]
    \item $H_1=F_1^1(z)=I_7+e_{2,1}(2z)-e_{1,5}(z)-e_{2,5}(z^2)$,
    \item $H_2=F_2^1(z)=I_7+e_{3,1}(2z)-e_{1,6}(z)-e_{3,6}(z^2)$,
    \item $H_3=F_3^1(z)=I_7+e_{2,1}(2z)-e_{1,5}(z)-e_{2,5}(z^2)$,
    \item  $H_4=F_1^2(-\frac{1}{2})=I_7-e_{5,1}(1)+e_{1,2}(\frac{1}{2})-e_{5,2}(\frac{1}{4}),
$
    \item $H_5=F_2^2(-\frac{1}{2})=I_7-e_{6,1}(1)+e_{1,3}(\frac{1}{2})-e_{6,3}(\frac{1}{4}),$
    \item $H_6= F_3^2(-\frac{1}{2})=I_7-e_{7,1}(1)+e_{1,4}(\frac{1}{2})-e_{7,4}(\frac{1}{4}).$
\end{enumerate}

\begin{definition}
    ${\rm TO}_7(R)$ is the subgroup of ${\rm EO}_7(R)$ generated by $H_i=F_i^1(z)$, $1\leq i\leq 3$, and $H_j=F_{j-3}^2(\frac{1}{2})$, $4\leq j\leq 6$.
\end{definition}
\noindent Generalize this to define ${\rm TO}_{2n+1}(R)$, as follows:
\begin{definition}
   For $n\geq 3$, ${\rm TO}_{2n+1}(R)$ is the subgroup of ${\rm EO}_{2n+1}(R)$ generated by the matrices $H_i=F_i^1(z)$, $1\leq i\leq n$, and $H_j=F_{j-n}^2(\frac{1}{2})$, $n+1\leq j\leq 2n$.
\end{definition}
\begin{remark}
    The matrix $A=\begin{pmatrix}
    1&O&O\\O&\gamma & \delta \\
    O &O& (\gamma^T)^{-1}
\end{pmatrix}\in {\rm TO}_{2n+1}(R)$, where $\gamma$ is an upper triangular matrix such that $\gamma^{-1}\delta \in {{\rm Alt}}_n(R)$, and with identity elements on the main diagonal.
\end{remark}
\subsection{Generators for ${\rm TO'}_{2n+1}(R)$}
We can define ${\rm TO'}_{2n+1}(R)$ as follows: The general form of an element in ${\rm TO'}_{2n}(R)$ is given by 
$\begin{pmatrix}
    \gamma & O\\ \delta & (\gamma^T)^{-1}
\end{pmatrix}$, 
where $\gamma$ is a lower triangular matrix  with 1's on the main diagonal such that $\delta\cdot \gamma^{-1}\in {{\rm Alt}}_n(R)$. An arbitrary element in ${\rm TO'}_6(R)$ is of the form,

\begin{align*}
A&=\begin{pmatrix}
    I_n & O \\
    \delta\cdot\gamma^{-1} &I_n
\end{pmatrix}\cdot\begin{pmatrix}
    \gamma & O \\
    O & (\gamma^T)^{-1}
\end{pmatrix}\\[8pt]
&= \begin{pmatrix}
    1&0&0&0&0&0&0\\
    0&1&0&0&0&0\\
    0&0&1&0&0&0\\
    0&-cq+p&q&1&0&0\\
    cq-p&0&r&0&1&0\\
    -q&-r&0&0&0&1
\end{pmatrix} \cdot\begin{pmatrix}
    1&0&0&0&0&0\\
    a&1&0&0&0&0\\
    b&c&1&0&0&0\\
    0&0&0&1&-a&ac-b\\
    0&0&0&0&1&-c\\
    0&0&0&0&0&1
\end{pmatrix}\\[5pt]
&=oe_{51}(cq-p)\cdot oe_{61}(-q)\cdot oe_{62}(-r)\cdot oe_{31}(b)\cdot oe_{32}(c)\cdot oe_{21}(a)
\end{align*} 
Thus, we can define ${\rm TO'}_{2n+1}(R)$ as follows:

\begin{definition}
For $n\geq 3$, ${\rm TO'}_{2n+1}(R)$ is the subgroup of ${\rm EO}_{2n+1}(R)$ generated by the matrices $H_i= F_i^2(z)$, $1\leq i\leq n-1$, $H_i=F_{i-n}^1(z)$, $n+2\leq i\leq 2n$, and $F_n^2(\frac{1}{2})$.
\end{definition}
\begin{remark}
    The matrix $\begin{pmatrix}
   1&O&O\\ O& \gamma & O \\
    O&\delta & (\gamma^T)^{-1}
\end{pmatrix}\in {\rm TO'}_{2n+1}(R)$, where $\gamma$ is a lower triangular matrix with 1's on the main diagonal such that $\delta \gamma^{-1}\in {{\rm Alt}}_n(R)$.
\end{remark}

\section{Technical Results}

 We begin the section by giving results that identify odd orthogonal matrices and odd elementary orthogonal matrices. Then, we provide some results for transvections, which is vital in the upcoming sections. The main result we prove in this section is Theorem~\ref{Aux4.15}, which gives complete information about the canonical mapping $\phi$ from the subgroups ${\rm TO}_{2n+1}(R)$, ${\rm \Pi O}_{2n+1}(R)$, ${\rm MO}_{2n+1}(R)$, and ${\rm DO}_{2n+1}(R)$ on to ${\rm TO}_{2n+1}(R/I)$, ${\rm \Pi O}_{2n+1}(R/I)$, ${\rm MO}_{2n+1}(R/I)$, and ${\rm DO}_{2n+1}(R/I)$, where $R$ is a ring and $I$ is an ideal of $R$. Finally, the splitting of an odd dimensional orthogonal matrix with entries from a field $k$, is given by $ {\rm O}_{2n+1}(k)={\rm TO}_{2n+1}(k)\cdot {\rm MO}_{2n+1}(k)\cdot {\rm TO}_{2n+1}(k)$.
\begin{theorem}\label{Aux4.1}
 \begin{enumerate}[label=\emph{(\roman*)}]
    \item If $\alpha \in {\rm GL}_n(I)$, then $\begin{pmatrix}
        1&O&O\\O&\alpha&O\\O&O&(\alpha^T)^{-1}
    \end{pmatrix}\in {\rm O}_{2n+1}(I)$.
    \item If $\alpha \in {\rm E}_{n}(R,I)$, then $\begin{pmatrix}
        1&O&O\\O&\alpha&O\\O&O&(\alpha^T)^{-1}
    \end{pmatrix}\in {\rm EO}_{2n+1}(R,I).$
    \end{enumerate}
\end{theorem}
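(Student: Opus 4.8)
The plan is to prove both parts by direct matrix computation, verifying the defining relation $\beta^t\phi\beta=\phi$ for part (i) and exhibiting an explicit elementary factorization for part (ii). For part (i), write $\beta=\begin{pmatrix}1&O&O\\O&\alpha&O\\O&O&(\alpha^T)^{-1}\end{pmatrix}$ with $\alpha\in{\rm GL}_n(I)$, and use $\phi=2\perp\widetilde\psi_n=\begin{pmatrix}2&O&O\\O&O&I_n\\O&I_n&O\end{pmatrix}$. A block multiplication gives $\beta^t\phi\beta=\begin{pmatrix}2&O&O\\O&O&\alpha^t(\alpha^T)^{-1}\\O&(\alpha^{T})^{-t}\alpha&O\end{pmatrix}=\phi$ since $\alpha^t(\alpha^t)^{-1}=I_n$, so $\beta\in{\rm O}_{2n+1}(R)$; and since $\alpha\in{\rm GL}_n(I)$ means $\alpha=I_n+\mu$ with $\mu\in{\rm M}_n(I)$, one checks $(\alpha^T)^{-1}=I_n+\nu$ with $\nu\in{\rm M}_n(I)$ (as $I$ is an ideal and the inverse is a polynomial in $\mu$ with the leading identity term), hence $\beta=I_{2n+1}+\eta$ with $\eta\in{\rm M}_{2n+1}(I)$, so $\beta\in{\rm GL}_{2n+1}(I)$. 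Therefore $\beta\in{\rm O}_{2n+1}(R)\cap{\rm GL}_{2n+1}(I)={\rm O}_{2n+1}(I)$.

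For part (ii), the strategy is to reduce the claim to the generators of ${\rm E}_n(R,I)$ and their images. Recall ${\rm E}_n(R,I)$ is generated by elements of the form $\gamma\, e_{ij}(z)\,\gamma^{-1}$ with $\gamma\in{\rm E}_n(R)$ and $z\in I$. The key observation is that the map $\alpha\mapsto\begin{pmatrix}1&O&O\\O&\alpha&O\\O&O&(\alpha^T)^{-1}\end{pmatrix}$ is a group homomorphism ${\rm GL}_n(R)\to{\rm O}_{2n+1}(R)$: this follows from $(\alpha_1\alpha_2)^T=\alpha_1^T\alpha_2^T$ and $((\alpha_1\alpha_2)^T)^{-1}=(\alpha_1^T)^{-1}(\alpha_2^T)^{-1}$ together with block-diagonal multiplicativity. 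Consequently it suffices to show (a) the image of $e_{ij}(z)$ for $z\in I$ lies in ${\rm EO}_{2n+1}(R,I)$, and (b) the image of ${\rm E}_n(R)$ lies in ${\rm EO}_{2n+1}(R)$; then the image of $\gamma e_{ij}(z)\gamma^{-1}$ is a conjugate of an ${\rm EO}_{2n+1}(R,I)$ generator by an element of ${\rm EO}_{2n+1}(R)$, which again lies in ${\rm EO}_{2n+1}(R,I)$ by definition. For (b), the image of $e_{ij}(z)$ (any $z\in R$) is $\begin{pmatrix}1&O&O\\O&I_n+e_{ij}(z)&O\\O&O&I_n-e_{ji}(z)\end{pmatrix}$, which in the indexing of the $2n+1$ group is precisely $I_{2n+1}+e_{i+1,j+1}(z)-e_{n+j+1,n+i+1}(z)=F^3_{i,j}(z)\in{\rm EO}_{2n+1}(R)$ (using $i\neq j$ so the two blocks do not interfere, and $(I_n+e_{ij}(z))^{-T}=I_n-e_{ji}(z)$). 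Then (a) is the same computation with $z\in I$, giving $F^3_{i,j}(z)=[F^1_i(z),F^2_j(-\tfrac12)]$, which is a commutator of an ${\rm EO}_{2n+1}(I)$ generator with an ${\rm EO}_{2n+1}(R)$ generator, hence lies in ${\rm EO}_{2n+1}(R,I)$.

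The main obstacle is the bookkeeping in (a): one must check that $F^3_{i,j}(z)\in{\rm EO}_{2n+1}(R,I)$ when $z\in I$, i.e. that $[F^1_i(z),F^2_j(-\tfrac12)]$ is a product of conjugates $\alpha F^k_\ell(w)\alpha^{-1}$ with $\alpha\in{\rm EO}_{2n+1}(R)$ and $w\in I$. Writing the commutator as $\big(F^1_i(z)\, F^2_j(-\tfrac12)\, F^1_i(z)^{-1}\big)\cdot F^2_j(-\tfrac12)^{-1}$ and noting $F^1_i(z)=F^1_i(z)$ is itself in ${\rm EO}_{2n+1}(I)\subseteq{\rm EO}_{2n+1}(R,I)$ for $z\in I$ while $F^2_j(-\tfrac12)\in{\rm EO}_{2n+1}(R)$, one expresses the commutator as $\big(F^2_j(-\tfrac12)\,[\text{element of }{\rm EO}_{2n+1}(R,I)]\,F^2_j(\tfrac12)\big)$ up to the conjugating action, and invokes the fact that ${\rm EO}_{2n+1}(R,I)$ is normalized by ${\rm EO}_{2n+1}(R)$ by its very definition. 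A cleaner alternative, if available from the standard theory, is to appeal to the relative analogue of the commutator formula $[{\rm EO}_{2n+1}(R),{\rm EO}_{2n+1}(I)]\subseteq{\rm EO}_{2n+1}(R,I)$; either way the argument is routine once the homomorphism property and the identification of the image of $e_{ij}(z)$ with $F^3_{i,j}(z)$ are in place.
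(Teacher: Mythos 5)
Your proposal is correct. Part (i) is the same direct verification the paper intends when it says the assertion ``follows from the definition'': the block computation $\beta^t\phi\beta=\phi$ with $\phi=2\perp\widetilde\psi_n$ is exactly right. One small remark: your justification that $(\alpha^T)^{-1}\equiv I_n \bmod{\rm M}_n(I)$ via ``the inverse is a polynomial in $\mu$ with leading identity term'' is true (by Cayley--Hamilton) but needs more care than you give it; the cleaner route is to note that ${\rm GL}_n(I)=\ker({\rm GL}_n(R)\to{\rm GL}_n(R/I))$ is a subgroup stable under transposition, so it is automatically closed under $\alpha\mapsto(\alpha^T)^{-1}$.

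For part (ii) your route differs from the paper's in presentation rather than substance: the paper simply cites Lemma~2.2 of Suslin--Kopeiko (the even orthogonal case, transported through the embedding $1\perp-$), whereas you unwind that lemma inside the odd group. Your argument is self-contained and uses only facts already displayed in this paper: the hyperbolic map $\alpha\mapsto 1\perp\alpha\perp(\alpha^T)^{-1}$ is a homomorphism sending $e_{ij}(z)$ to $F^3_{i,j}(z)=[F^1_i(z),F^2_j(-\tfrac12)]$; for $z\in I$ this commutator equals $F^1_i(z)\cdot\bigl(F^2_j(-\tfrac12)F^1_i(-z)F^2_j(-\tfrac12)^{-1}\bigr)$, a product of two generators of ${\rm EO}_{2n+1}(R,I)$ as defined in Section~2; and ${\rm EO}_{2n+1}(R,I)$ is normalized by ${\rm EO}_{2n+1}(R)$ by construction, which handles the conjugates $\gamma e_{ij}(z)\gamma^{-1}$ generating ${\rm E}_n(R,I)$. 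This buys independence from the external reference at the cost of a page of bookkeeping; the paper's citation buys brevity at the cost of making the reader chase the even-dimensional computation elsewhere. Both are valid, and the computational core --- identifying the image of $e_{ij}(z)$ with $F^3_{i,j}(z)$ --- is identical.
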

\begin{proof}
    The first assertion simply follows from the definition of an odd orthogonal group. Assertion (ii) follows from Lemma~2.2 of \cite{SusKop1977}.
\end{proof}
\begin{theorem}\label{Aux4.2}
    If $\alpha \in {{\rm Alt}}_n(I)$, then the matrices $\begin{pmatrix}
        1&O&O\\O&I_n&\alpha\\O&O&I_n
    \end{pmatrix}$ and $\begin{pmatrix}
        1&O&O\\O&I_n&O\\O&\alpha&I_n\end{pmatrix}$ belongs to ${\rm EO}_{2n+1}(I)$.
\end{theorem}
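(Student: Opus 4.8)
The plan is to reduce the odd-dimensional statement to the known even-dimensional result. Recall from Lemma~1.7 of \cite{SusKop1977} (our Lemma~\ref{Pre2.3}) that for the even orthogonal group, Suslin--Kopeiko already proved that if $\alpha\in{\rm Alt}_n(I)$, then $\begin{pmatrix}I_n&\alpha\\O&I_n\end{pmatrix}$ and $\begin{pmatrix}I_n&O\\\alpha&I_n\end{pmatrix}$ lie in ${\rm EO}_{2n}(I)$. So the first step is to write $\alpha=\sum_{i<j}a_{ij}(e_{ij}-e_{ji})$ with $a_{ij}\in I$ (the standard basis of ${\rm Alt}_n$), which expresses the upper matrix $\begin{pmatrix}I_n&\alpha\\O&I_n\end{pmatrix}$ as a product of elementary generators $oe_{i,n+j}(a_{ij})$ of ${\rm EO}_{2n}(I)$, each with entry $a_{ij}\in I$; similarly for the lower matrix.

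Next I would invoke the canonical embedding ${\rm O}_{2n}(R)\hookrightarrow{\rm O}_{2n+1}(R)$ obtained by taking $1\perp(-)$, under which ${\rm EO}_{2n}(I)$ maps into ${\rm EO}_{2n+1}(I)$: this is exactly what the embedding definition in Section~2 asserts. Applying $1\perp(-)$ to $\begin{pmatrix}I_n&\alpha\\O&I_n\end{pmatrix}$ yields $\begin{pmatrix}1&O&O\\O&I_n&\alpha\\O&O&I_n\end{pmatrix}$, and likewise the lower-triangular version maps to the second matrix in the statement; both therefore lie in ${\rm EO}_{2n+1}(I)$. The only care needed is to check that the block arrangement coming from the $1\perp(-)$ embedding (which reorders coordinates as $0,1,\dots,n,n+1,\dots,2n$ and inserts the fixed coordinate at position $0$) really produces the displayed $3\times 3$ block shape; this is a bookkeeping verification against the embedding formula, and the symmetric bilinear form $2\perp\widetilde\psi_n$ is compatible with it by construction.

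Alternatively, and perhaps more cleanly, one can argue directly inside ${\rm EO}_{2n+1}(R)$: the matrix $\begin{pmatrix}1&O&O\\O&I_n&\alpha\\O&O&I_n\end{pmatrix}$ with $\alpha=\sum_{i<j}a_{ij}(e_{ij}-e_{ji})$ is visibly the product $\prod_{i<j} F^4_{i,j}(a_{ij})$ up to the commutator relations among the $F^4$'s, since $F^4_{i,j}(z)=I_{2n+1}+e_{i+1,n+j+1}(z)-e_{j+1,n+i+1}(z)$ adds exactly the skew pair in the upper-right block, and each $a_{ij}\in I$; so by definition this element lies in ${\rm EO}_{2n+1}(I)$. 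The lower-block matrix is handled the same way with the generators $F^5_{i,j}(a_{ij})$. I expect the main (though still routine) obstacle to be verifying that the product of the $F^4_{i,j}(a_{ij})$ (respectively $F^5_{i,j}(a_{ij})$) equals the target matrix \emph{on the nose} rather than only up to lower-order terms: one must check that the cross-terms $e_{i+1,n+j+1}\cdot e_{j+1,n+i+1}$ vanish because the column index $n+j+1$ of the first factor does not meet the row index $j+1$ of the second, so the generators in question commute and the product is simply additive in the upper-right block. Once that commutativity is recorded, the identity is immediate and the theorem follows.
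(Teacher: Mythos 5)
Your proposal is correct and matches the paper's own argument: the paper likewise decomposes $\alpha=\sum\alpha_{ij}(e_{ij}-e_{ji})$, identifies each elementary piece as $1\perp oe_{i+1,n+j+1}(\alpha_{ij})=F^4_{i,j}(\alpha_{ij})$ (resp.\ $F^5_{i,j}$ for the lower-block case), and takes the product. The "cross-term" verification you flag is automatic from the block-unitriangular shape, since $\left(\begin{smallmatrix}I&A\\O&I\end{smallmatrix}\right)\left(\begin{smallmatrix}I&B\\O&I\end{smallmatrix}\right)=\left(\begin{smallmatrix}I&A+B\\O&I\end{smallmatrix}\right)$.
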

\begin{proof}
    Note that,\begin{align*}
        \begin{pmatrix}
        1&O&O\\O&I_n&\alpha\\O&O&I_n
    \end{pmatrix}&=\begin{pmatrix}
        1&O&O\\
        O&I_n&\displaystyle \sum_{1\leq i,j\leq n} \alpha_{ij}(e_{ij}-e_{ji})\\
        O&O&I_n
    \end{pmatrix}\\&=\displaystyle\prod_{1\leq i,j\leq n}\begin{pmatrix}
        1&O&O\\O&I_n&\alpha_{ij}(e_{ij}-e_{ji})\\O&O&I_n
    \end{pmatrix}\\&=\displaystyle\prod_{1\leq i,j\leq n}{\rm {1}} \perp {oe}_{i+1,n+j+1}(\alpha_{ij})=\displaystyle\prod_{1\leq i,j\leq n}F_{i,j}^4 (\alpha_{ij}).
    \end{align*}
    By similar arguments, we obtain $$
        \begin{pmatrix}
        1&O&O\\O&I_n&O\\O&\alpha&I_n
    \end{pmatrix}=\displaystyle\prod_{1\leq i,j\leq n}F_{i,j}^5 (\alpha_{ij}).$$
\end{proof}
\noindent The following results in \cite{SusKop1977} are some useful tools for the main results in this section.
\begin{lemma}\label{Aux4.3}{\rm \cite[Lemma~2.5]{SusKop1977}}
    Suppose $v,w\in R^{n}$ such that $v^T\cdot w=0.$ If $x\in o(w)$, then there exists $\alpha \in { {\rm Alt}}_{n}(R)$, such that $v\cdot x=\alpha\cdot w.$
\end{lemma}
\begin{corollary}\label{Aux4.4}{\rm \cite[Corollary~2.6]{SusKop1977}}
    Assume that $n\geq 3$, $v,w\in R^{n}$ and $v^T\cdot w=0$. If $x\in o(v)+o(w)$, then $I_{n+1}+x\cdot(v\cdot w^T)\in {\rm E}_{n}(R)$.
\end{corollary}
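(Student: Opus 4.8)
The plan is to reduce the statement, by two formal manipulations, to the case where $x$ lies in a single order ideal, and then to feed that case into Lemma~\ref{Aux4.3}. Set $N := v\,w^{T}$. Since $w^{T}v=(v^{T}w)^{T}=0$, we get $N^{2}=v\,(w^{T}v)\,w^{T}=0$, so $(I+y_{1}N)(I+y_{2}N)=I+(y_{1}+y_{2})N$ for all $y_{1},y_{2}\in R$. Writing $x=x_{1}+x_{2}$ with $x_{1}\in o(v)$ and $x_{2}\in o(w)$ therefore gives $I+xN=(I+x_{1}N)(I+x_{2}N)$, and it suffices to treat the two cases $x\in o(v)$ and $x\in o(w)$ separately. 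These two cases are interchanged by transposition: $(I+x\,v\,w^{T})^{T}=I+x\,w\,v^{T}$ has exactly the same shape with $v$ and $w$ swapped (and $w^{T}v=0$), and ${\rm E}_{n}(R)$ is closed under transposition since $e_{ij}(z)^{T}=e_{ji}(z)$. Hence it is enough to prove: if $x\in o(w)$, then $I+x\,v\,w^{T}\in{\rm E}_{n}(R)$.

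For this, apply Lemma~\ref{Aux4.3}: as $x\in o(w)$ and $v^{T}w=0$, there is $\alpha\in{\rm Alt}_{n}(R)$ with $x\,v=\alpha\,w$, so $I+x\,v\,w^{T}=I_{n}+\alpha\,w\,w^{T}$. Expanding $\alpha=\sum_{i<j}\alpha_{ij}(e_{ij}-e_{ji})$, the matrices $P_{ij}:=\alpha_{ij}(e_{ij}-e_{ji})\,w\,w^{T}$ satisfy $P_{ij}P_{kl}=0$ for every pair of indices, because the product contains the scalar factor $w^{T}(e_{kl}-e_{lk})w=w_{k}w_{l}-w_{l}w_{k}=0$; consequently $I_{n}+\alpha\,w\,w^{T}=\prod_{i<j}(I_{n}+P_{ij})$, and it remains to show each factor lies in ${\rm E}_{n}(R)$. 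Each such factor has the form $I_{n}+y\,w^{T}$ with $y=\alpha_{ij}(w_{j}e_{i}-w_{i}e_{j})$, a vector supported on only the two coordinates $i,j$ and satisfying $y^{T}w=0$.

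The core of the argument — the step I expect to be the main obstacle, and the only place where $n\geq 3$ is essential — is the claim that $I_{n}+y\,w^{T}\in{\rm E}_{n}(R)$ whenever $y^{T}w=0$ and $y$ is supported on just two coordinates $i,j$. I would prove this by peeling off the remaining coordinates one at a time: for $k\notin\{i,j\}$ we have $y_{k}=0$, so the $k$-th row of $I_{n}+y\,w^{T}$ equals $e_{k}^{T}$, and left-multiplying by $e_{ik}(-y_{i}w_{k})$ and $e_{jk}(-y_{j}w_{k})$ clears the $k$-th column as well (the effect being to replace $w$ by $w-w_{k}e_{k}$ in the two active rows), so the matrix becomes inert at coordinate $k$ with a block of the same shape on the other $n-1$ coordinates. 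After clearing all coordinates outside $\{i,j\}$ one is left with a $2\times 2$ special-linear block $I_{2}+y'w'^{T}$ ($y'^{T}w'=0$) surrounded by identities; using one of the now-inert coordinates $k$ (which exists precisely because $n\geq 3$), this equals the Whitehead-type product $(I+y'e_{k}^{T})(I+e_{k}w'^{T})(I-y'e_{k}^{T})(I-e_{k}w'^{T})$ of elementary matrices. Tracing all these elementary factors back up the reduction, multiplying over $i<j$, and recombining the split $x=x_{1}+x_{2}$ yields $I+x\,(v\,w^{T})\in{\rm E}_{n}(R)$. The genuinely technical point is the bookkeeping in this reduction — tracking which transvections clear which entries, and invoking the square-zero relations $N^{2}=0$ and $w^{T}(e_{kl}-e_{lk})w=0$ at the right moments; everything else is formal.
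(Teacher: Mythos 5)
The paper does not prove this statement at all: it is imported verbatim as Corollary~2.6 of Suslin--Kopeiko \cite{SusKop1977}, so there is no in-paper argument to compare against. Your proof is correct and is, as far as I can tell, essentially the classical argument from that reference: the square-zero observation $N^{2}=0$ splits $x$ into its $o(v)$- and $o(w)$-parts, transposition symmetry reduces to $x\in o(w)$, Lemma~\ref{Aux4.3} converts $xv$ into $\alpha w$ with $\alpha$ alternating, the identity $w^{T}(e_{kl}-e_{lk})w=0$ lets you factor $I+\alpha ww^{T}$ into the rank-one pieces $I+yw^{T}$ with $y$ supported on two coordinates and $y^{T}w=0$, and the Whitehead commutator $(I+y'e_{k}^{T})(I+e_{k}w'^{T})(I-y'e_{k}^{T})(I-e_{k}w'^{T})=I+y'w'^{T}$ (valid because $e_{k}^{T}y'=w'^{T}e_{k}=w'^{T}y'=0$) finishes the two-coordinate case using the spare index guaranteed by $n\geq 3$. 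All the intermediate verifications check out ($P_{ij}P_{kl}=0$, the row operations $e_{ik}(-y_{i}w_{k})$ leaving row $k$ untouched since $y_{k}=0$, and $I+y'e_{k}^{T}=e_{ik}(y_{i}')e_{jk}(y_{j}')$ being genuinely elementary). One cosmetic point: the statement as printed says $I_{n+1}+x\,(vw^{T})$, which is a typo for $I_{n}+x\,(vw^{T})$; you silently and correctly read it as $I_{n}$.
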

\begin{corollary}\label{Aux4.5}{\rm \cite[Corollary~2.7]{SusKop1977}}
    Assume that $n\geq 3$, $v',v'',w'\in R^{n}$ and $(v'')^T\cdot v'=(v'')^T\cdot w'=0$. If $x\in(o(v')+o(v''))^2$, then $I_n+x\cdot(v''\cdot (w')^T)\in {\rm E}_{n}(R)$.
\end{corollary}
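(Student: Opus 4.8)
The statement involves only ${\rm E}_n(R)$ and the ordinary transpose, so it is literally \cite[Corollary~2.7]{SusKop1977}; the plan is to reproduce that argument from Lemma~\ref{Aux4.3} and Corollary~\ref{Aux4.4}, so I describe how I would run it.

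The first tool is a telescoping identity. Because $(v'')^T w'=0$, the matrix $v''(w')^T$ squares to zero, so
$$\bigl(I_n+x_1\,v''(w')^T\bigr)\bigl(I_n+x_2\,v''(w')^T\bigr)=I_n+(x_1+x_2)\,v''(w')^T$$
for all $x_1,x_2\in R$. Hence $I_n+x\,v''(w')^T=\prod_k\bigl(I_n+x_k\,v''(w')^T\bigr)$ whenever $x=\sum_k x_k$, and it is enough to prove the claim when $x$ ranges over additive generators of $(o(v')+o(v''))^2$. Expanding a product $pq$ with $p=p'+p''$, $q=q'+q''$ (primes in $o(v')$, double primes in $o(v'')$) exhibits it as $p'q'$ plus three terms lying in $o(v'')$; so, after telescoping, it suffices to treat two cases: (a) $x\in o(v'')$; and (b) $x=ab$ with $a,b\in o(v')$.

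Case (a) is immediate: since $x\in o(v'')\subseteq o(v'')+o(w')$ and $(v'')^T w'=0$, Corollary~\ref{Aux4.4} applied to the pair $(v'',w')$ yields $I_n+x\,v''(w')^T\in{\rm E}_n(R)$.

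In Case (b) the hypothesis $(v'')^T v'=0$ is used. By Lemma~\ref{Aux4.3} there is $\alpha\in{\rm Alt}_n(R)$ with $v''\,b=\alpha v'$, whence
$$I_n+x\,v''(w')^T=I_n+\alpha\bigl(a\,v'(w')^T\bigr),$$
the identity plus an alternating matrix times a rank-one matrix whose trace $a\,(w')^T\alpha v'=ab\,(w')^T v''$ vanishes. Moreover $I_n+x\,v''(v')^T\in{\rm E}_n(R)$ by Corollary~\ref{Aux4.4} for the pair $(v'',v')$ (one has $x\in o(v')\subseteq o(v'')+o(v')$), and since $I_n+x\,v''(w')^T=\bigl(I_n+mx\,v''(v')^T\bigr)\bigl(I_n+x\,v''(w'-mv')^T\bigr)$ for every $m\in R$, one may shear $w'$ by any $R$-multiple of $v'$ modulo ${\rm E}_n(R)$. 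After these normalisations the problem reduces to showing that the surviving transvections $I_n+u\,z^T$ with $u^T z=0$, of the restricted type produced above, lie in ${\rm E}_n(R)$, which one does by an explicit elementary row-and-column computation exploiting the spectator coordinate available when $n\geq3$, exactly as in the proof of \cite[Corollary~2.7]{SusKop1977}.

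The hard part is Case (b), and specifically that final elementary-matrix computation. Corollary~\ref{Aux4.4} cannot be applied directly to the pair $(v'',w')$ there, since there is no reason for $o(v')^2$ to lie in $o(v'')+o(w')$; one is forced to trade the relation $(v'')^T v'=0$ through Lemma~\ref{Aux4.3} for an alternating update and then dispose of the resulting transvection by hand, which is essentially the only place in this section where the elementary-matrix combinatorics is not formal.
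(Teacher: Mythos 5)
The paper offers no proof of this corollary at all: it is imported verbatim as \cite[Corollary~2.7]{SusKop1977}, so there is nothing internal to compare your argument against. Judged on its own terms, your reconstruction has a genuine gap. The telescoping reduction (valid because $(v''(w')^T)^2=0$), the identification of the additive generators of $(o(v')+o(v''))^2$, and Case (a) are all correct: for $x\in o(v'')$ one has $x\in o(v'')+o(w')$ and $(v'')^Tw'=0$, so Corollary~\ref{Aux4.4} applies to the pair $(v'',w')$. The problem is Case (b), $x=ab$ with $a,b\in o(v')$, which is the only case in which the hypothesis $(v'')^Tv'=0$ and the \emph{square} of the order ideal actually matter, i.e.\ the entire content of the corollary beyond Corollary~\ref{Aux4.4}. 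There you correctly invoke Lemma~\ref{Aux4.3} to write $v''b=\alpha v'$ with $\alpha\in{\rm Alt}_n(R)$, correctly observe that the relevant trace vanishes, and correctly note the shearing identity $I_n+x\,v''(w')^T=(I_n+mx\,v''(v')^T)(I_n+x\,v''(w'-mv')^T)$ with the first factor elementary by Corollary~\ref{Aux4.4}; but none of these observations is then used to conclude anything. The argument ends with ``an explicit elementary row-and-column computation exploiting the spectator coordinate \ldots exactly as in the proof of \cite[Corollary~2.7]{SusKop1977}'' --- which is circular, since reproducing that proof was the stated goal. No choice of $m$ is exhibited for which the sheared transvection becomes tractable, and $I_n+a\,\alpha v'(w')^T$ is a general transvection $I_n+u z^T$ with $z^Tu=0$, which is \emph{not} elementary in general for $n\geq 3$; some additional structural input (e.g.\ Suslin's lemma that $I_n+\beta\,ww^T\in{\rm E}_n(R)$ for $\beta$ alternating, which is what drives Corollary~\ref{Aux4.4}) must be brought to bear, and your write-up never does so.

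Concretely: you have reduced the corollary to the claim that $I_n+ab\,v''(w')^T\in{\rm E}_n(R)$ for $a,b\in o(v')$, and that claim is asserted rather than proved. Until that step is supplied --- for instance by further decomposing $a$ and $b$ over the coordinates of $v'$ and showing how the resulting transvections combine with the alternating-matrix lemma underlying Corollary~\ref{Aux4.4} --- the proposal is an outline of the Suslin--Kopeiko argument, not a proof of it.
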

\subsection{Matrix form of the transvection ${\rm E}_{v,w}^o(x)$}

Given $v=\begin{pmatrix}
    v_0&v'&v''
\end{pmatrix}^T\in R^{2n+1}$ and $w=\begin{pmatrix}
    w_0&w'&w''
\end{pmatrix}^T \in R^{2n+1}$,
we denote $\tilde{v}=\begin{pmatrix}
    2v_0&(v'')^T&(v')^T
\end{pmatrix}$ and  $\tilde{w}=\begin{pmatrix}
    2w_0&(w'')^T&(w')^T
\end{pmatrix}$. By this notation, bilinear form $\phi$ can be written as $\phi(v,w)=\tilde{v}\cdot w=\tilde{w}\cdot v$. If $q(v)=\phi(v,w)=0$, the transvection ${\rm E}^o_{v,w}(x)$ is defined in matrix form as follows, 
\begin{equation}
{\rm E}_{v,w}^o(x)=I_{2n+1}+x\cdot(w\cdot\tilde{v}-v\cdot\tilde{w})-x^2\cdot q(w)\cdot(v\cdot \tilde{w}).
\end{equation}
If $q(w)=0$, this expression reduces to ${\rm E}_{v,w}^o(x)=I_{2n+1}+x\cdot(w\cdot\tilde{v}-v\cdot\tilde{w})$.
\begin{lemma}\label{Aux4.6}
    For $n\geq3$, let $v=\begin{pmatrix}
        v_0&v_1&\dots&v_{2n}
    \end{pmatrix}^T$, $w=\begin{pmatrix}
        w_0&w_1& \dots &w_{2n}
    \end{pmatrix}^T\in R^{2n+1}$, with $v_0^2=w_0^2=v_0\cdot w_0=0$ and assume that $q(v)$, $\phi(v,w)$, $q(w)=0$ and $w_i=0$ or $w_{\delta(i)}=0$, where $i=1,\ldots ,n$. If $x\in o(v)^2$, then ${\rm E}_{v,w}^o(x)\in {\rm EO}_{2n+1}(R)$.
\end{lemma}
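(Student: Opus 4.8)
\noindent\emph{Proof plan.} The plan is to reduce the assertion, after normalising $w$, to the elementariness criteria of Corollaries~\ref{Aux4.4} and~\ref{Aux4.5} together with the embeddings of Theorems~\ref{Aux4.1} and~\ref{Aux4.2}, while the special coordinate $e_1$ is absorbed into explicit products of the generators $F^1_i,F^2_i,F^3_{i,j}$. First I would normalise $w$: since one of $w_i,w_{\delta(i)}$ vanishes for each $i$, conjugating by a suitable $\sigma_\pi\in{\rm \Pi O}_{2n+1}(R)$ moves every coordinate of $w$ other than $w_0$ into the block $n+1,\dots,2n$, so that, after replacing $v,w$ by $\sigma_\pi v,\sigma_\pi w$, we may assume $w=(w_0,0,w'')^T$. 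This is legitimate and keeps us inside ${\rm EO}_{2n+1}(R)$ because ${\rm \Pi O}_{2n+1}(R)$ normalises ${\rm EO}_{2n+1}(R)$ — such a $\sigma_\pi$ fixes $e_1$, permutes and swaps the hyperbolic pairs, and so by Lemma~\ref{Pre2.3}(v) carries each generator ${\rm E}^o_{e_\bullet,e_1(-1)}(z)$ to another generator — and because $\sigma_\pi$ preserves $q$, $\phi$ and order ideals. Then, using $x\in o(v)^2$ to write $x=\sum_k r_k v_{\mu_k}v_{\nu_k}$ with $r_k\in R$ and $0\le\mu_k,\nu_k\le 2n$, and the additivity of ${\rm E}^o_{v,w}(\,\cdot\,)$ in its scalar argument (Lemma~\ref{Pre2.3}(ii),(iii)), I would reduce the statement to showing ${\rm E}^o_{v,w}(r\,v_\mu v_\nu)\in{\rm EO}_{2n+1}(R)$ for a single pair of indices $(\mu,\nu)$.

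For such a factor I would split $v=v_0e_1+\hat v$ and $w=w_0e_1+\hat w$ with $\hat v=(0,v',v'')$, $\hat w=(0,0,w'')$. The four vectors $v_0e_1,\hat v,w_0e_1,\hat w$ are isotropic and pairwise orthogonal — this is exactly where $v_0^2=w_0^2=v_0w_0=0$ and $q(v)=q(w)=\phi(v,w)=0$ enter — so the alternating bilinearity of Lemma~\ref{Pre2.3}(iv) factors the transvection as ${\rm E}^o_{v,w_0e_1}(\,\cdot\,)\cdot{\rm E}^o_{v_0e_1,\hat w}(\,\cdot\,)\cdot{\rm E}^o_{\hat v,\hat w}(\,\cdot\,)$. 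The first two factors are supported on the special row and column, and a direct check should express each as an explicit product of generators $F^1_i,F^2_i$: the key point is that every quadratic tail $-z^2e_{\bullet\bullet}$ inside such a generator and every cross-term between distinct generators is a multiple of $v_0^2$, $w_0^2$ or $v_0w_0$, hence is $0$. The third factor is $1\perp\alpha$, where $\alpha$ is the transvection ${\rm E}_{(v',v''),(0,w'')}(r\,v_\mu v_\nu)$ of the even orthogonal group ${\rm O}_{2n}(R)$, and $\alpha$ decomposes there as $\big([\gamma]\perp[(\gamma^T)^{-1}]\big)\cdot\left(\begin{smallmatrix}I_n&O\\ \delta&I_n\end{smallmatrix}\right)$ with $\gamma=I_n-r\,v_\mu v_\nu\,v'(w'')^T$ and $\delta\in{\rm Alt}_n(R)$ (using $(v')^Tw''=(v')^Tv''=0$). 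The ${\rm Alt}_n$-unipotent factor lies in ${\rm EO}_{2n+1}(R)$ by Theorem~\ref{Aux4.2}, and when $\mu,\nu\neq 0$ the scalar $r\,v_\mu v_\nu$ lies in $o((v',v''))^2$, so $\gamma\in{\rm E}_n(R)$ by Corollary~\ref{Aux4.5} and hence $1\perp\big([\gamma]\perp[(\gamma^T)^{-1}]\big)\in{\rm EO}_{2n+1}(R)$ by Theorem~\ref{Aux4.1}; when exactly one of $\mu,\nu$ is $0$ and the other index lies in $1,\dots,n$, the scalar lies in $o(v')$ and Corollary~\ref{Aux4.4} gives $\gamma\in{\rm E}_n(R)$ in the same way.

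The hard part will be the remaining case $\mu=0$ with $v_\nu$ a coordinate $v''_i$ of $v''$. Then $c:=r\,v_0v''_i$ is divisible by $v_0$, the special-row/column factors are trivial, and ${\rm E}^o_{v,w}(c)=1\perp{\rm E}_{(v',v''),(0,w'')}(c)$; but now $c\in o(v'')$, which lies neither in $o(v')+o(w'')$ nor in $o((v',v''))^2$, so neither Corollary~\ref{Aux4.4} nor Corollary~\ref{Aux4.5} applies to $\gamma=I_n-c\,v'(w'')^T$. My plan here is to exploit $c^2=0$: write $\gamma=D(I_n+N)$ with $D$ the diagonal part of $\gamma$ and $N$ its strictly off-diagonal part. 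Since $(v')^Tw''=0$ one gets $\det D=1$, and as each diagonal entry of $D$ is a unit $1-\epsilon_k$ with $\epsilon_k^2=\epsilon_k\epsilon_\ell=0$, one obtains $D\in{\rm E}_n(R)$ (a determinant-one diagonal matrix is a product of blocks $\mathrm{diag}(u,u^{-1})\in{\rm E}_2(R)$); moreover $N^2=0$ and $1\perp\big([I_n+N]\perp[((I_n+N)^T)^{-1}]\big)=\prod_{k\neq\ell}F^3_{k,\ell}(N_{k\ell})$, whose cross-terms all vanish because every $N_{k\ell}$ is a multiple of $v_0$. Passing these two facts through Theorems~\ref{Aux4.1} and~\ref{Aux4.2} would settle this case, hence the lemma. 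I expect the genuine difficulty to lie not in any single step but in the bookkeeping of all these product decompositions, arranged so that at every stage the unwanted higher-order terms are forced to vanish by $v_0^2=w_0^2=v_0w_0=0$ — precisely the kind of computation the GAP experiments were used for.
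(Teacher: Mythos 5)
Your proposal is correct and follows essentially the same route as the paper: normalise $w$ by a permutation in ${\rm \Pi O}_{2n+1}(R)$, split ${\rm E}^o_{v,w}(x)$ into a block-diagonal factor handled by Corollaries~\ref{Aux4.4}/\ref{Aux4.5} together with Theorem~\ref{Aux4.1}, an alternating unipotent factor handled by Theorem~\ref{Aux4.2}, and a special-row/column factor that is a product of the generators $F^1_i,F^2_i$ whose quadratic tails and cross-terms vanish because $v_0^2=w_0^2=v_0\cdot w_0=0$. The only divergence is that you isolate and patch the scalar $x=r\,v_0v''_i$ (which escapes both corollaries) via the $D(I+N)$ argument, a case the paper's proof passes over by citing Corollary~\ref{Aux4.4} directly; this is a refinement of, not a departure from, the paper's argument.
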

\begin{proof}
    Let $\sigma_\pi$ be the matrix corresponding to the permutation $\pi$ which commutes with $\delta$. Then $\sigma_\pi\in {\rm O}_{2n+1}(R)$ and it normalizes ${\rm EO}_{2n+1}(R).$ By hypothesis, we get a permutation $\pi$ commuting with $\delta$ such that $\sigma_\pi \cdot w$ has the form $\begin{pmatrix}
        w_0&w'&O
    \end{pmatrix}^T$, where $w'\in R^n$. Assume that $w=\begin{pmatrix}
        w_0&w'&O
    \end{pmatrix}^T $and write $v=\begin{pmatrix}
        v_0&v'&v''
    \end{pmatrix}^T$. Then by hypothesis, $(v'')^T\cdot w'=(v'')^T\cdot v'=(w')^T\cdot v''=(v')^T\cdot v''=0$ and the matrix corresponding to ${\rm E}_{v,w}^o(x)$ is,  
    \begin{align*}        
    {\rm E}_{v,w}^o(x)&=I_{2n+1}+x\cdot(w\cdot\tilde{v}-v\cdot\tilde{w})\\[8pt] &=\begin{pmatrix}
        1&x\cdot (w_0 \cdot(v'')^T )&x\cdot( w_0\cdot(v')^T-v_0\cdot(w')^T)\\
        2x\cdot(w' \cdot v_0-v'\cdot w_0)& I_n-x\cdot(w'\cdot (v'')^T)& x\cdot(w'\cdot(v')^T-v'\cdot (w')^T)\\
        -2x\cdot(v''\cdot w_0)&0&I_n+x\cdot(v''\cdot(w')^T)
    \end{pmatrix}\\[8pt] 
    &=\begin{pmatrix}
        1&O&O\\O&\alpha&O\\O&O&(\alpha^T)^{-1}
    \end{pmatrix} \begin{pmatrix}
        1&O&O\\O&I_n&x(v'\cdot(w')^T-w' \cdot(v')^T)\\
        O&O&I_n
    \end{pmatrix} \begin{pmatrix}
       1&\beta_1 & \beta_2\\
       -2(\beta_2)^T&I_n&O\\
       -2(\beta_1)^T&O&I_{n}
    \end{pmatrix}.
    \end{align*}
Here $\alpha=I_n-x\cdot(w' \cdot(v'')^T)$, $\beta_1=x\cdot w_0 \cdot(v'')^T $ and $\beta_2=x\cdot( w_0\cdot(v')^T-v_0\cdot(w')^T)$. Using Corollary~\ref{Aux4.4}, the matrix $(\alpha^T)^{-1}=I_n+x\cdot(v''\cdot (w')^T)\in {\rm E}_n(R)$. Hence $\alpha\in {\rm E}_n(R)$ and the first factor is found in ${\rm EO}_{2n+1}(R)$ by Theorem~\ref{Aux4.1}. Similarly, using Theorem~\ref{Aux4.2}, the second factor lies in ${\rm EO}_{2n+1}(R)$. Also, the third factor can be written in the form (since we have the conditions $v_0^2=w_0^2=v_0\cdot w_0=0$), $$\begin{pmatrix}
        1&x\cdot w_0 \cdot(v'')^T & O\\
        O&I_n&O\\
        -2x\cdot v'' w_0&-(x\cdot w_0 \cdot(v'')^T)^2&I_{n}
    \end{pmatrix}\cdot \begin{pmatrix}
        1&O & x\cdot( w_0\cdot(v')^T-v_0\cdot(w')^T)\\
        2x\cdot(w'\cdot v_0-v'\cdot w_0)&I_n&-(x\cdot( w_0\cdot(v')^T-v_0\cdot(w')^T)^2\\
        O&O&I_{n}
    \end{pmatrix},$$ hence it lies in ${\rm EO}_{2n+1}(R).$
\end{proof}
\begin{lemma}\label{Aux4.7}
    Let $n\geq 3$, $\alpha\in {\rm Alt}_{n+1}(R)$ and $y\in R.$ Assume  $v=\begin{pmatrix}
        v_0&v'&v''
    \end{pmatrix}^T\in R^{2n+1}$, with $v_0^2=0$ and $ u=\begin{pmatrix}
             2u_0\\u'
        \end{pmatrix}\in R^{n+1}$, and also $q(v)=0$ and $ \begin{pmatrix}
            v_0\\v'
        \end{pmatrix}\cdot y=\alpha\cdot \begin{pmatrix}
            v_0\\v''
        \end{pmatrix} $. Put $w=\begin{pmatrix}
        \alpha\cdot u\\u'\cdot y
    \end{pmatrix}$ and assume $(u_0\cdot y)=\alpha_0\cdot u$, where $\alpha_0$ is the $1^{\rm st}$ row of $\alpha$. Then we have the following.
    \begin{enumerate}[label=\emph{(\roman*)}]
        \item $q(w)=\phi(v,w)=0$.
        \item If $x\in o(v)^2$, then $ {\rm E}_{v,w}^o(x)\in {\rm EO}_{2n+1}(R)$.
    \end{enumerate}
\end{lemma}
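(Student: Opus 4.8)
The plan is to establish the two assertions in turn: (i) is a computation in the bilinear algebra of $(R^{2n+1},q)$, and (ii) then amounts to writing the (simplified) matrix of ${\rm E}_{v,w}^o(x)$ as a product of matrices already known to lie in ${\rm EO}_{2n+1}(R)$, along the lines of the proof of Lemma~\ref{Aux4.6}.

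For (i), I would abbreviate $\bar v=(v_0,v')^T$ and $\hat v=(v_0,v'')^T$ in $R^{n+1}$, so that the hypothesis reads $\bar v\,y=\alpha\hat v$, and record that $w=(w_0,w',w'')^T$ with $(w_0,w')^T=\alpha u$, $w_0=\alpha_0 u=u_0y$ and $w''=u'y$. The key tools are the identities forced by $\alpha\in{\rm Alt}_{n+1}(R)$, namely $\xi^T\alpha\xi=0$ and $\xi^T\alpha\eta=-\eta^T\alpha\xi$, together with the consequence $(v')^Tv''=0$ of $q(v)=v_0^2=0$. Expanding $\phi(v,w)=\tilde v\cdot w=2v_0w_0+(v'')^Tw'+(v')^Tw''$ and $q(w)=w_0^2+(w')^Tw''$, substituting $(w_0,w')^T=\alpha u$ and $w''=u'y$, and repeatedly applying the antisymmetry of $\alpha$ in combination with $\bar v\,y=\alpha\hat v$ and $u_0y=\alpha_0u$, both quantities reduce to expressions supported on $v_0,u_0,y$ which vanish under the standing hypotheses; this is routine once the substitutions are made.

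For (ii), by part~(i) the transvection has the simplified matrix ${\rm E}_{v,w}^o(x)=I_{2n+1}+x\,(w\tilde v-v\tilde w)$. Splitting rows and columns as $1\mid n\mid n$, substituting the explicit shapes of $v$ and $w$, and using $\bar v\,y=\alpha\hat v$, I would regroup the nine blocks so as to factor
$${\rm E}_{v,w}^o(x)=\bigl(1\perp\theta\perp(\theta^T)^{-1}\bigr)\cdot\begin{pmatrix}1&O&O\\ O&I_n&\mu\\ O&O&I_n\end{pmatrix}\cdot\Lambda,$$
where $\theta=I_n+N$ for a suitable $N$ assembled from $x,\alpha,u,v$. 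One then argues: $\theta\in{\rm E}_n(R)$ by Corollary~\ref{Aux4.4} (exactly as in the proof of Lemma~\ref{Aux4.6}, the hypothesis $x\in o(v)^2$, together with $v_0^2=0$, supplying the order-ideal membership needed), so the first factor lies in ${\rm EO}_{2n+1}(R)$ by Theorem~\ref{Aux4.1}; $\mu\in{\rm Alt}_n(R)$, so the middle factor lies in ${\rm EO}_{2n+1}(R)$ by Theorem~\ref{Aux4.2}; and the corner factor $\Lambda$, which encodes the interaction of the distinguished coordinate $0$ with the two hyperbolic blocks, is decomposed into two products of $F^1$- and $F^2$-type generators using $v_0^2=0$ and $w_0^2=0$ (the latter from part~(i)), just as in the final step of the proof of Lemma~\ref{Aux4.6}. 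Multiplying the three factors gives ${\rm E}_{v,w}^o(x)\in{\rm EO}_{2n+1}(R)$.

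The step I expect to be the main obstacle is pinning down the middle factor and verifying that $\mu$ is genuinely alternating. Since $\alpha$ here is $(n+1)\times(n+1)$, not $n\times n$, it couples the distinguished coordinate $0$ with both $n$-dimensional hyperbolic blocks, so the antisymmetry of $\mu$ does not follow from $\alpha\in{\rm Alt}_{n+1}(R)$ alone but only after invoking the precise relations $\bar v\,y=\alpha\hat v$ and $u_0y=\alpha_0u$. Controlling this cancellation while at the same time keeping the off-diagonal blocks of the two outer factors inside ${\rm E}_n(R)$ through the ideal $o(v)^2$ is the delicate part, and is exactly where the odd-orthogonal argument is more involved than its even-orthogonal counterpart in~\cite{SusKop1977}.
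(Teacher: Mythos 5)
Your part (i) is essentially the paper's argument: expand $q(w)$ and $\phi(v,w)$, substitute $\begin{pmatrix}w_0\\w'\end{pmatrix}=\alpha\cdot u$ and $w''=u'\cdot y$, and use the antisymmetry of $\alpha$ together with the hypotheses $\begin{pmatrix}v_0\\v'\end{pmatrix}\cdot y=\alpha\cdot\begin{pmatrix}v_0\\v''\end{pmatrix}$ and $u_0\cdot y=\alpha_0\cdot u$. That part is fine.

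Part (ii) is where there is a genuine gap, and you have in fact flagged the symptom yourself (``the main obstacle'') without supplying the cure. The paper does not attempt a direct block factorization of ${\rm E}_{v,w}^o(x)$ for a general $u$. It observes instead that $w$ depends $R$-linearly on $u$, so by Lemma~\ref{Pre2.3}(iii) the assignment $u\mapsto {\rm E}_{v,w(u)}^o(x)$ is a homomorphism from the additive group $R^{n+1}$ into ${\rm O}_{2n+1}(R)$; hence it suffices to prove the claim for $u$ with a single nonzero entry. For such $u$ the vector $w=\begin{pmatrix}\alpha\cdot u\\ u'\cdot y\end{pmatrix}$ has at most one nonzero coordinate in the second hyperbolic block, placed so that the hypothesis ``$w_i=0$ or $w_{\delta(i)}=0$'' of Lemma~\ref{Aux4.6} holds, and that lemma finishes the proof. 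Your proposed three--factor decomposition cannot be made to work for general $u$: the $(3,2)$ block of $I_{2n+1}+x\,(w\tilde v-v\tilde w)$ equals $x\bigl(w''(v'')^T-v''(w'')^T\bigr)$, which is genuinely nonzero once $w''=u'\cdot y\neq 0$, whereas the factors you describe (a block--diagonal matrix, an upper unipotent block, and a corner factor tied to the distinguished coordinate) cannot reproduce such a block except via terms that vanish under $v_0^2=w_0^2=0$; moreover the $(2,2)$ block $I_n+x\bigl(w'(v'')^T-v'(w'')^T\bigr)$ is now a sum of two rank--one perturbations, so Corollary~\ref{Aux4.4}, which treats only $I_n+x\,(v\cdot w^T)$, no longer applies directly. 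Both difficulties disappear after the linearity reduction, which is the missing idea; as written, your part (ii) is a plan whose critical step is neither carried out nor, in the generality you set up, carry-out-able.
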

\begin{proof}
   \begin{enumerate}[label=(\roman*)]
       \item Let $\alpha=\begin{pmatrix}
           \alpha_{0}\\\alpha_{1}\\\alpha_{2}\\ \vdots\\\alpha_{n+1}
       \end{pmatrix}=\begin{pmatrix}
           0&\alpha_{12}&\alpha_{13}&\dots&\alpha_{1n+1}\\
           -\alpha_{12}&0&\alpha_{23}&\dots&\alpha_{2n+1}\\
           -\alpha_{13}&-\alpha_{23}&0&\dots&\alpha_{3n+1}\\
           \vdots&\vdots&\vdots&\ddots&\vdots\\
           -\alpha_{1n+1}&-\alpha_{2n+1}&-\alpha_{3n+1}&\dots&0
       \end{pmatrix}$, where $\alpha_{i}$ denote the\\[6pt] $(i+1)^{\rm th}$ row, for $i=0,\dots ,n$. Then we get, $$w=\begin{pmatrix}
       \alpha_{12}u_1'+\alpha_{13}u_2'+\alpha_{14}u_3'+\dots+\alpha_{1n+1}u_n'\\[4pt]
-2\alpha_{12}u_0+\alpha_{23}u_2'+\alpha_{24}u_3'+\dots+\alpha_{2n+1}u_n'\\[4pt]
-2\alpha_{13}u_0-\alpha_{23}u_1'+\alpha_{34}u_3'+\dots+\alpha_{3n+1}u_n'\\[4pt]
\vdots\\[4pt]
-2\alpha_{1n+1}u_0-\alpha_{2n+1}u_1'-\alpha_{3n+1}u_2'-\dots -\alpha_{nn+1}u_{n-1}'\\[4pt]u_1'y\\[4pt]u_2'y\\[4pt] \vdots\\[4pt]u_n'y
       \end{pmatrix}.$$ Since we have the condition $\alpha_{12}u_1'+\alpha_{13}u_2'+\alpha_{14}u_3'+\dots+\alpha_{1n+1}u_n'=(u_0\cdot y)$, by computations we get $q(w)=0$. We get \begin{align*}
\phi(v,w)&=2v_0\alpha_0u+v_1''\alpha_1u+\dots+v_n''\alpha_nu+v_1'u_1'y+\dots+v_n'u_n'y\\[6pt]&=\begin{pmatrix}
    v_0&(v'')^T
\end{pmatrix}(\alpha\cdot u)+\begin{pmatrix}
     v_0&(v')^T
\end{pmatrix}(u\cdot y)\\[6pt]&=-(\alpha\cdot\begin{pmatrix}
    v_0\\v''
\end{pmatrix})^T\cdot u+(\begin{pmatrix}
            v_0\\v'
        \end{pmatrix}\cdot y)^T\cdot u=0\end{align*}
       
   \item Since, we have the mapping $u\rightarrow {\rm E}_{v,w}^o(x)$ in Lemma~\ref{Pre2.3} (iii), which is a homomorphism of the additive group of $R^{n+1}$ into ${\rm O}_{2n+1}(R)$, we only need to consider the assertion for columns $u$ having only one non-zero entry. Then, the assertion is straightforward from Lemma~\ref{Aux4.6}.
 \end{enumerate}
\end{proof}
\begin{theorem}\label{Aux4.8}
Let $n\geq 3$, $v=\begin{pmatrix}
    v_0&v'&v''
\end{pmatrix}^T\in R^{2n+1}$, $w=\begin{pmatrix}
    w_0&w'&w''
\end{pmatrix}^T\in R^{2n+1}$ with $v_0^2=v_0\cdot w_0=0$, and $q(v)$, $\phi(v,w)=0$. If $x\in o(v)^3$, then the transvection, ${\rm E}_{v,w}^o(x)$ lies in ${\rm EO}_{2n+1}(R)$.
\end{theorem}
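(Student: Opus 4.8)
The plan is to reduce the general transvection ${\rm E}_{v,w}^o(x)$ with $x\in o(v)^3$ to the situation already treated in Lemma~\ref{Aux4.7}, whose hypotheses package a transvection as an ${\rm EO}_{2n+1}$-element provided $w$ is built from an alternating matrix $\alpha$ and a vector $u$ satisfying the compatibility relations $\bigl(\begin{smallmatrix}v_0\\v'\end{smallmatrix}\bigr)\cdot y=\alpha\cdot\bigl(\begin{smallmatrix}v_0\\v''\end{smallmatrix}\bigr)$ and $(u_0\cdot y)=\alpha_0\cdot u$. The first step is to write $x=x_1\cdot x_2$, where $x_1\in o(v)$ and $x_2\in o(v)^2$; using ${\rm E}_{v,w}^o(x)={\rm E}_{v\cdot x_1, w\cdot x_2}^o(1)={\rm E}_{v\cdot x_1, w}^o(x_2)$ (Lemma~\ref{Pre2.3}(ii)), I would replace $v$ by $v\cdot x_1$ so that it suffices to prove the statement for $x\in o(v)^2$ under the additional assumption that the entries of $v$ lie in $o(v)$ (so that, in particular, relations of the form ``$v_i\cdot y = (\text{linear in the }v_j)$'' can be solved). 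Note that scaling $v$ by $x_1$ preserves $q(v)=0$, $\phi(v,w)=0$, and $v_0^2=v_0w_0=0$.

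Next, I would use Lemma~\ref{Aux4.3} on the vectors $\bigl(\begin{smallmatrix}v_0\\v'\end{smallmatrix}\bigr)$ and $\bigl(\begin{smallmatrix}v_0\\v''\end{smallmatrix}\bigr)$ in $R^{n+1}$: the condition $q(v)=0$ with $v_0^2=0$ gives $(v')^T\cdot v''+ 2v_0\cdot(\text{something})=0$, i.e. an orthogonality relation making Lemma~\ref{Aux4.3} applicable, so that for each coordinate $y$ coming from $o(v)$ there is $\alpha\in{\rm Alt}_{n+1}(R)$ with $\bigl(\begin{smallmatrix}v_0\\v'\end{smallmatrix}\bigr)\cdot y=\alpha\cdot\bigl(\begin{smallmatrix}v_0\\v''\end{smallmatrix}\bigr)$. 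After this, decompose $w$ according to the splitting $R^{2n+1}=R\oplus R^n\oplus R^n$ and, using $\phi(v,w)=0$, express the three blocks of $w$ in the template form $w=\bigl(\begin{smallmatrix}\alpha\cdot u\\ u'\cdot y\end{smallmatrix}\bigr)$ demanded by Lemma~\ref{Aux4.7} — the extra factor of $o(v)$ (we have $x\in o(v)^3$, one factor absorbed into $v$, leaving $o(v)^2$) is exactly what is needed to solve for $u$ and $y$ inside the order ideal. Then Lemma~\ref{Aux4.7}(i) confirms $q(w)=\phi(v,w)=0$ and Lemma~\ref{Aux4.7}(ii) gives ${\rm E}_{v,w}^o(x)\in{\rm EO}_{2n+1}(R)$ directly.

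If a single application of Lemma~\ref{Aux4.7} does not cover an arbitrary $w$, the remedy is to split $w=w^{(1)}+w^{(2)}$ into pieces each of the required template form and use the additivity of $v\mapsto{\rm E}_{v,w}^o$ in the second slot (Lemma~\ref{Pre2.3}(iii), valid since $v$ is isotropic) together with the commutator identity of Lemma~\ref{Pre2.3}(iv) to absorb the correction term ${\rm E}_{u,v}^o(q(w))$; this correction again lies in ${\rm EO}_{2n+1}(R)$ by Lemma~\ref{Aux4.6} because its ``$q$-weight'' sits in $o(v)^2$. The main obstacle I anticipate is purely bookkeeping: tracking which powers of the order ideal $o(v)$ are consumed at each reduction (one to rectify $v$, one or two more to realize the template for $w$) and verifying that the vector $u$ and scalar $y$ produced by Lemma~\ref{Aux4.3} can be chosen with entries in $o(v)$ — this is where the hypothesis $x\in o(v)^3$, rather than $o(v)^2$, is genuinely used, and getting the exponent accounting exactly right (so that every auxiliary transvection invoked lands in the range of Lemma~\ref{Aux4.6} or Lemma~\ref{Aux4.7}) is the delicate point.
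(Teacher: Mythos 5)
Your toolkit is the right one --- Lemma~\ref{Aux4.3} to manufacture an alternating matrix from the isotropy of $v$, a splitting of the second vector into a piece of the Lemma~\ref{Aux4.7} template plus a remainder killed by Lemma~\ref{Aux4.6}, and additivity in the second slot --- and this is indeed how the paper argues. But your opening reduction sends the spare factor of $o(v)$ into the wrong slot, and the exponent accounting that you yourself flag as the delicate point does not close. If you write $x=x_1\cdot x_2$ with $x_1\in o(v)$, $x_2\in o(v)^2$ and replace $v$ by $v\cdot x_1$, the order ideal shrinks: $o(v\cdot x_1)=x_1\cdot o(v)$, so Lemmas~\ref{Aux4.6} and \ref{Aux4.7} --- whose hypotheses require the parameter to lie in the square of the order ideal of the \emph{first} argument of the transvection --- would now demand $x_2\in x_1^2\cdot o(v)^2$, which you do not have. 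Moreover, a general $w$ with $\phi(v,w)=0$ is simply not of the template form $\bigl(\alpha\cdot u,\ u'\cdot y\bigr)$, so ``expressing the three blocks of $w$'' that way cannot be done for $w$ itself; and if you instead extract the Lemma~\ref{Aux4.3} multiplier $y$ from the remaining $x_2\in o(v)^2$, the transvection parameter left over lies only in $o(v)$, again too weak for Lemmas~\ref{Aux4.6} and \ref{Aux4.7}. Either way one factor of $o(v)$ is unaccounted for.

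The correct bookkeeping keeps $v$ untouched and pushes the third factor into $w$: write $x=x_1\cdot y$ with $x_1\in o(v)^2$ and $y$ ranging over additive generators of $o(v)$, which (as $2$ is invertible) may be taken in $o\bigl(\tfrac12 v_0,v'\bigr)$ or in $o\bigl(\tfrac12 v_0,v''\bigr)$ separately --- note Lemma~\ref{Aux4.3} needs $y$ in the order ideal of the specific vector $\bigl(\tfrac12 v_0,v''\bigr)$, not merely in $o(v)$. Then ${\rm E}^o_{v,w}(x)={\rm E}^o_{v,w\cdot y}(x_1)$ by Lemma~\ref{Pre2.3}(ii). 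Since $q(v)=0$ and $v_0^2=0$ give $\bigl(\tfrac12 v_0,v'\bigr)^T\cdot\bigl(\tfrac12 v_0,v''\bigr)=0$, Lemma~\ref{Aux4.3} produces $\alpha\in{\rm Alt}_{n+1}(R)$ with $\bigl(\tfrac12 v_0,v'\bigr)\cdot y=\alpha\cdot\bigl(\tfrac12 v_0,v''\bigr)$, and it is $w\cdot y$ --- not $w$ --- that splits as $w_1+w_2$, with $w_1$ of the Lemma~\ref{Aux4.7} template built from $\alpha$ and $(w_0,w'')$, and $w_2$ vanishing in the last block so that Lemma~\ref{Aux4.6} applies. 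Both factors ${\rm E}^o_{v,w_1}(x_1)$ and ${\rm E}^o_{v,w_2}(x_1)$ then carry the full parameter $x_1\in o(v)^2$ relative to the original $v$, as required. So the ingredients you list are the right ones, but the reduction must transfer $y$ to $w$, not to $v$.
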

\begin{proof}
    Consider the mapping $x\mapsto {\rm E}_{v,w}^o(x)$, which is a homomorphism of the additive group of the ring $R$ into ${\rm O}_{2n+1}(R)$ (Lemma~\ref{Pre2.3}). It suffices to consider $x$, ranging over some system of generators of the additive group of $o(v)^3.$ Therefore, assume that $x=x_1\cdot y$, where $x_1\in o(v)^2$ and $y$ belongs\\[5pt] to either $o(\begin{pmatrix}
         \frac{1}{2}v_0\\v'
    \end{pmatrix})$ or $o(\begin{pmatrix}
         \frac{1}{2}v_0\\v''
    \end{pmatrix})$. Without loss of generality, assume that $y\in o(\begin{pmatrix}
        \frac{1}{2}v_0\\v''
    \end{pmatrix})$. Since $q(v)=0$, we get $\frac{1}{4}v_0^2+(v')^T\cdot v''=\begin{pmatrix}
        \frac{1}{2}v_0\\v'
    \end{pmatrix}^T\cdot \begin{pmatrix}
         \frac{1}{2}v_0\\v''
    \end{pmatrix}=0$. Using Lemma~\ref{Aux4.3}, there exists $\alpha\in {\rm Alt}_{n+1}(R)$ such that\\[5pt]$\begin{pmatrix}
         \frac{1}{2}v_0\\v'
    \end{pmatrix}\cdot y=\alpha\cdot\begin{pmatrix}
        \frac{1}{2}v_0\\
    v''\end{pmatrix} $. Set $w_1=\begin{pmatrix}
        \alpha\cdot \begin{pmatrix} w_0\\ w''\end{pmatrix}\\w''\cdot y
    \end{pmatrix}$ and  $w_2=\begin{pmatrix}
        \begin{pmatrix} w_0\\ w'\end{pmatrix}\cdot y-\alpha\cdot \begin{pmatrix} w_0\\ w''\end{pmatrix}\\0
    \end{pmatrix}$, with $(w_0\cdot y)=\alpha_0\cdot w$, clearly $w_1+w_2=w\cdot y$. Using Lemma~\ref{Pre2.3}, we have ${\rm E}_{v,w}^o(x)={\rm E}_{v,w\cdot y}(x_1)={\rm E}_{v,w_1}^o(x_1)\cdot {\rm E}_{v,w_2}^o(x_1)$. Here, assume $(w_0\cdot y)=\alpha_0\cdot \begin{pmatrix}
        w_0\\w''
    \end{pmatrix}$, where $\alpha_0$ is the $1^{\rm st}$ row of $\alpha$, then both the factors lies in ${\rm EO}_{2n+1}(R)$ by Lemma~\ref{Aux4.7} and Lemma~\ref{Aux4.6} respectively.
\end{proof}
\begin{corollary} \label{Aux4.9}
    Assume that $n\geq 3$, $v,w\in R^{2n+1}$ with $v_0^2=w_0^2=v_0\cdot w_0=0$, and $q(v)=q(w)=\phi(v,w)=0$. If $x=o(v)^3+o(w)^3$, then ${\rm E}_{v,w}^o(x)\in {\rm EO}_{2n+1}(R)$.
\end{corollary}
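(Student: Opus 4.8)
The plan is to deduce this from Theorem~\ref{Aux4.8} by combining the additivity of the transvection in its parameter with a symmetry argument that exchanges $v$ and $w$. As already observed in the proof of Theorem~\ref{Aux4.8}, since $v$ and $w$ are isotropic and $\phi(v,w)=0$, the assignment $x\mapsto {\rm E}_{v,w}^o(x)$ is a homomorphism from the additive group of $R$ into ${\rm O}_{2n+1}(R)$ (this follows from Lemma~\ref{Pre2.3} (ii)--(iii)). Any element of the ideal sum $o(v)^3+o(w)^3$ can be written as $x'+x''$ with $x'\in o(v)^3$ and $x''\in o(w)^3$, so ${\rm E}_{v,w}^o(x'+x'')={\rm E}_{v,w}^o(x')\cdot {\rm E}_{v,w}^o(x'')$, and it suffices to establish the claim separately in the two cases $x\in o(v)^3$ and $x\in o(w)^3$.

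The first case is precisely Theorem~\ref{Aux4.8}: the hypotheses $v_0^2=v_0\cdot w_0=0$ and $q(v)=\phi(v,w)=0$ are all available, so ${\rm E}_{v,w}^o(x)\in {\rm EO}_{2n+1}(R)$ whenever $x\in o(v)^3$. For the second case I would exploit the symmetry of the construction. Because $q(v)=q(w)=0$, the matrix formula for the transvection loses its quadratic correction term, giving ${\rm E}_{v,w}^o(x)=I_{2n+1}+x\cdot(w\cdot\tilde v-v\cdot\tilde w)$ and likewise ${\rm E}_{w,v}^o(x)=I_{2n+1}+x\cdot(v\cdot\tilde w-w\cdot\tilde v)$; since the bilinear form $\phi$ is symmetric, these identities yield ${\rm E}_{v,w}^o(x)={\rm E}_{w,v}^o(-x)$. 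Now apply Theorem~\ref{Aux4.8} with the roles of $v$ and $w$ interchanged: the required hypotheses become $w_0^2=w_0\cdot v_0=0$ and $q(w)=\phi(w,v)=0$, all of which hold, and $-x\in o(w)^3$; hence ${\rm E}_{w,v}^o(-x)\in {\rm EO}_{2n+1}(R)$, that is, ${\rm E}_{v,w}^o(x)\in {\rm EO}_{2n+1}(R)$.

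There is no serious obstacle here; the substance of the argument was already carried out in Theorem~\ref{Aux4.8}, and the corollary merely packages it symmetrically. The two points deserving care are the reduction to the two one-sided cases --- for which one must note that a general element of $o(v)^3+o(w)^3$ splits as a sum of one element from each ideal and then invoke the additivity of $x\mapsto {\rm E}_{v,w}^o(x)$ --- and the verification of the identity ${\rm E}_{v,w}^o(x)={\rm E}_{w,v}^o(-x)$ in the present setting, which relies crucially on the simultaneous vanishing $q(v)=q(w)=0$ to suppress the quadratic terms in the matrix formula, together with the symmetry of the bilinear form $\phi$ associated with $q$.
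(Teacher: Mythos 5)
Your argument is correct and is exactly the intended deduction from Theorem~\ref{Aux4.8} (the paper states the corollary without proof): write $x=x'+x''$ with $x'\in o(v)^3$, $x''\in o(w)^3$, use additivity of $x\mapsto{\rm E}_{v,w}^o(x)$, treat $x'$ by Theorem~\ref{Aux4.8} directly, and treat $x''$ via the identity ${\rm E}_{v,w}^o(x'')={\rm E}_{w,v}^o(-x'')$, which holds here because $q(v)=q(w)=\phi(v,w)=0$ (this is also Lemma~\ref{Pre2.3}(iv)). The corollary's extra hypotheses $w_0^2=w_0\cdot v_0=0$ and $q(w)=0$ are precisely what make the swapped application of Theorem~\ref{Aux4.8} legitimate, so the proof is complete as written.
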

The normality theorem for the DSER elementary orthogonal group ${\rm EO}_R(Q,{\mathbb H}(R)^m)$ proved by A.A. Ambily in \cite{AmbRao2020} leads us to the following result: 
 \begin{theorem}\label{Aux4.10} For $n\geq 3$, the odd elementary orthogonal group ${\rm EO}_{2n+1}(R)$ is a normal subgroup of ${\rm O}_{2n+1}(R)$.
\end{theorem}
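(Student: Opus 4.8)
The plan is to recognize the pair $\big({\rm O}_{2n+1}(R),{\rm EO}_{2n+1}(R)\big)$ as an instance of the isometry group / DSER elementary orthogonal group pair studied in \cite{AmbRao2020}, and then invoke the normality theorem proved there. First I would identify the quadratic space: since $2\in R^\times$, the rank-one form on $R$ with Gram matrix $\langle 2\rangle$ is non-degenerate; call this quadratic space $Q$ and identify it with $Re_1$. In the basis $e_1;\,e_2,\dots,e_{n+1};\,e_{n+2},\dots,e_{2n+1}$, the Gram matrix $\phi=2\perp\widetilde\psi_n$ exhibits $R^{2n+1}$ as $Q\perp\mathbb H(R^n)$, where $\{e_2,\dots,e_{n+1}\}$ spans a Lagrangian $R^n$ and $\{e_{n+2},\dots,e_{2n+1}\}$ the complementary dual Lagrangian. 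Consequently ${\rm O}_{2n+1}(R)={\rm O}_R\!\big(Q\perp\mathbb H(R^n)\big)$, and the scaled version $\tilde{\rm O}_{2n+1}(R)$ matches correspondingly.

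Next I would match generators. The DSER elementary group ${\rm EO}_R\!\big(Q,\mathbb H(R)^n\big)$ is generated by the Eichler--Siegel--Dickson transvections mixing $Q$ with the two Lagrangians, together with the transvections internal to $\mathbb H(R^n)$. By the remark following the definition of ${\rm EO}_{2n+1}(R)$, the generators $F^1_i(z)={\rm E}^o_{e_{i+1},e_1(-1)}(z)$ and $F^2_i(z)={\rm E}^o_{e_{n+i+1},e_1(-1)}(z)$ are exactly the $Q$-$\mathbb H$ mixed transvections, while $F^3_{ij}(z)=1\perp oe_{i+1,j+1}(z)$, $F^4_{ij}(z)=1\perp oe_{i+1,n+j+1}(z)$ and $F^5_{ij}(z)=1\perp oe_{n+i+1,j+1}(z)$ are the ordinary elementary orthogonal generators of ${\rm EO}_{2n}$ acting on $\mathbb H(R^n)$, which are again DSER generators. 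Since $Q$ is free of rank one its order ideal is all of $R$, so the single parameter $z\in R$ of each $F$ realises every admissible DSER parameter; a routine check of both inclusions on generators then gives ${\rm EO}_{2n+1}(R)={\rm EO}_R\!\big(Q,\mathbb H(R)^n\big)$.

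Finally, with $m=n\ge 3$ (comfortably above the hyperbolic-rank bound required in \cite{AmbRao2020}), the normality theorem of that paper gives ${\rm EO}_R\!\big(Q,\mathbb H(R)^m\big)\trianglelefteq {\rm O}_R\!\big(Q\perp\mathbb H(R)^m\big)$; transporting this back through the identifications above yields ${\rm EO}_{2n+1}(R)\trianglelefteq {\rm O}_{2n+1}(R)$. The main obstacle is the bookkeeping in the generator identification — verifying that the DSER transvection parameters, which a priori live in $Q$, in the Lagrangians, and in $R$, are reproduced precisely by the one-parameter families $F^1_i(z),F^2_i(z)$ and by $1\perp oe_{ij}(z)$ — together with confirming that the hypotheses of \cite{AmbRao2020} are in force here (they are, being subsumed by $2\in R^\times$ and $n\ge 3$). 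I would also remark why the naive alternative fails: conjugating $F^1_i(z)$ by a general $\alpha\in{\rm O}_{2n+1}(R)$ produces, via Lemma~\ref{Pre2.3}(v), the transvection ${\rm E}^o_{\alpha e_{i+1},\,\ast}(z)$ with arbitrary parameter $z\in R$, whose first coordinate need not be square-zero, so Theorem~\ref{Aux4.8} and Corollary~\ref{Aux4.9} do not apply — which is precisely why the reduction to the DSER normality theorem is the efficient route.
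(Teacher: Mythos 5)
Your proposal is correct and follows essentially the same route as the paper, which derives this theorem directly from the normality of the DSER elementary orthogonal group ${\rm EO}_R(Q,\mathbb H(R)^m)$ proved in \cite{AmbRao2020}. The paper gives no further argument beyond that citation, so your explicit identification of $R^{2n+1}$ with $Q\perp\mathbb H(R^n)$ and the matching of the generators $F^1_i$, $F^2_i$ with the DSER transvections is simply a more detailed write-up of the same reduction.
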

\begin{lemma}\label{Aux4.11}
Let $A\subset C\subset B$ be a tower of rings and  $A$ is a retract in  $B$. Then 
\begin{enumerate}[label=\emph{(\roman*)}]
    \item ${\rm O}_{2n+1}(A) \cap {\rm EO}_{2n+1}(B) = {\rm EO}_{2n+1}(A)$.
    \item $({\rm O}_{2n+1}(A) \cdot {\rm EO}_{2n+1}(C)) \cap {\rm EO}_{2n+1}(B) = {\rm EO}_{2n+1}(C).$
\end{enumerate}
\end{lemma}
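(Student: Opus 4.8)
The plan is to reduce both statements to functoriality of the groups ${\rm O}_{2n+1}(-)$ and ${\rm EO}_{2n+1}(-)$ under ring homomorphisms, together with the splitting homomorphism supplied by the retraction. First I would record the functoriality: since the defining form $\phi = 2\perp\widetilde\psi_n$ has entries in the prime ring and each generator $F_i^1(z)$, $F_i^2(z)$ is given by polynomial expressions in $z$ with integer coefficients, every ring homomorphism $f\colon S\to T$ induces (by applying $f$ entrywise) a group homomorphism $f_*\colon {\rm O}_{2n+1}(S)\to {\rm O}_{2n+1}(T)$ which satisfies $f_*(F_i^1(z))=F_i^1(f(z))$ and $f_*(F_i^2(z))=F_i^2(f(z))$, hence carries ${\rm EO}_{2n+1}(S)$ into ${\rm EO}_{2n+1}(T)$. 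Applying this to the inclusions in the tower $A\subseteq C\subseteq B$ gives ${\rm EO}_{2n+1}(A)\subseteq {\rm EO}_{2n+1}(C)\subseteq {\rm EO}_{2n+1}(B)$. The hypothesis that $A$ is a retract in $B$ means there is a ring homomorphism $r\colon B\to A$ with $r|_A=\mathrm{id}_A$; by the above, $r_*\colon {\rm O}_{2n+1}(B)\to {\rm O}_{2n+1}(A)$ is a group homomorphism with $r_*({\rm EO}_{2n+1}(B))\subseteq {\rm EO}_{2n+1}(A)$, and $r_*$ is the identity on ${\rm O}_{2n+1}(A)$ because $r$ fixes $A$ pointwise.

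For (i), the inclusion ${\rm EO}_{2n+1}(A)\subseteq {\rm O}_{2n+1}(A)\cap {\rm EO}_{2n+1}(B)$ is immediate. Conversely, given $\alpha\in {\rm O}_{2n+1}(A)\cap {\rm EO}_{2n+1}(B)$, I apply $r_*$: membership $\alpha\in {\rm EO}_{2n+1}(B)$ gives $r_*(\alpha)\in {\rm EO}_{2n+1}(A)$, while $\alpha$ having all entries in $A$ gives $r_*(\alpha)=\alpha$; hence $\alpha\in {\rm EO}_{2n+1}(A)$.

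For (ii), the inclusion $\supseteq$ is clear (take the ${\rm O}_{2n+1}(A)$-factor to be $I_{2n+1}$, and use ${\rm EO}_{2n+1}(C)\subseteq {\rm EO}_{2n+1}(B)$). For $\subseteq$, write an element of the left side as $\gamma=\alpha\cdot\epsilon$ with $\alpha\in {\rm O}_{2n+1}(A)$, $\epsilon\in {\rm EO}_{2n+1}(C)$, and $\gamma\in {\rm EO}_{2n+1}(B)$. Then $\alpha=\gamma\epsilon^{-1}$ lies in ${\rm EO}_{2n+1}(B)$ (since ${\rm EO}_{2n+1}(B)$ is a group containing both $\gamma$ and $\epsilon$) and also in ${\rm O}_{2n+1}(A)$, so part (i) yields $\alpha\in {\rm EO}_{2n+1}(A)\subseteq {\rm EO}_{2n+1}(C)$, whence $\gamma=\alpha\epsilon\in {\rm EO}_{2n+1}(C)$.

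The whole argument is a formal diagram chase, so I do not expect a serious obstacle; the one point that needs genuine care is the verification of the functoriality claims for the \emph{odd} orthogonal setup, namely that $r_*$ is well defined (preserves $\phi$), is a group homomorphism, sends the elementary generators $F_i^1,F_i^2$ to elementary generators, and restricts to the identity on ${\rm O}_{2n+1}(A)$. Once that is in place, (i) is an application of $r_*$ and (ii) reduces to (i). This mirrors the even orthogonal case treated in \cite{SusKop1977}.
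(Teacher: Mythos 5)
Your proposal is correct and follows essentially the same route as the paper: part (i) is proved by applying the retraction entrywise (the paper writes $\alpha=\psi(\alpha)$ and uses that the induced map carries ${\rm EO}_{2n+1}(B)$ into ${\rm EO}_{2n+1}(A)$), and part (ii) is reduced to part (i) by exactly the factorization $\alpha_1=\gamma\epsilon^{-1}$ you give. Your added care about functoriality of ${\rm O}_{2n+1}(-)$ and ${\rm EO}_{2n+1}(-)$ is a useful elaboration of what the paper leaves implicit, but it is not a different argument.
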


\begin{proof}
\begin{enumerate}[label=(\roman*)]
\item  Clearly ${\rm EO}_{2n+1}(A)\subset {\rm O}_{2n+1}(A) \cap {\rm EO}_{2n+1}(B).$  Let $\alpha\in {\rm O}_{2n+1}(A) \cap {\rm EO}_{2n+1}(B)$, then $\alpha=\psi(\alpha)$, where $\psi$ is the retraction. Thus, it belongs to ${\rm EO}_{2n+1}(A).$ 
\item It is obvious that, ${\rm EO}_{2n+1}(C) \subset ({\rm O}_{2n+1}(A) \cdot {\rm EO}_{2n+1}(C)) \cap {\rm EO}_{2n+1}(B)$. For the reverse inclusion, let $\alpha\in ({\rm O}_{2n+1}(A) \cdot {\rm EO}_{2n+1}(C)) \cap {\rm EO}_{2n+1}(B)$, then $\alpha=\alpha_1\cdot\alpha_2$, where $\alpha_1\in {\rm O}_{2n+1}(A)$ and $\alpha_2\in {\rm EO}_{2n+1}(C) $. Then $\alpha_1=\alpha\cdot{\alpha_2}^{-1}\in {\rm EO}_{2n+1}(B)\cap {\rm O}_{2n+1}(A)={\rm EO}_{2n+1}(A)$, so that $\alpha\in {\rm EO}_{2n+1}(C).$
  \end{enumerate}
\end{proof}

As a special case of \cite[Lemma~2.2]{Rao1984}, we obtain the following theorem. 
\begin{theorem}\label{Aux4.12}
  Let $C=k[X]$, $k[X^{-1}]$, or $k[X,X^{-1}]$, where $k$ is a field. For $n\geq2$, ${\rm O}_{2n+1}(C)={\rm O}_2(C)\cdot {\rm EO}_{2n+1}(C)$.
\end{theorem}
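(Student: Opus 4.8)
The plan is to reduce everything to a statement about the first row (or first column) of an orthogonal matrix, exactly as in the classical Suslin–Kopeiko argument, and then invoke that the relevant unimodular row over a Laurent polynomial ring in one variable over a field can be completed by elementary transformations. First I would observe that since $C$ is a principal ideal domain (indeed $k[X]$ and $k[X^{-1}]$ are PIDs, and $k[X,X^{-1}]$ is a localization of a PID, hence a PID), every finitely generated projective $C$-module is free, and more relevantly every unimodular row over $C$ of length $\geq 2$ is completable to an invertible matrix; in fact, by the structure of $\mathrm{SL}_m$ over a Euclidean-like domain one gets that such a row is elementarily equivalent to $(1,0,\dots,0)$ when $m$ is large. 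The point of citing \cite[Lemma~2.2]{Rao1984} is precisely that for an orthogonal matrix one can do this \emph{orthogonally}: the first column $v$ of $\alpha\in\mathrm{O}_{2n+1}(C)$ is an isotropic unimodular vector (isotropic because $q(v)=q(\alpha e_1)=q(e_1)$ and the quadratic form on the first basis vector is $\frac12 B_q(e_1,e_1)$; here one uses the specific form $\phi = \langle 2\rangle \perp \widetilde\psi_n$, so $e_1$ is \emph{anisotropic} with $q(e_1)=1$ — I would need to be careful here and instead track that $\alpha$ maps the hyperbolic part to an appropriate sublattice).

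The key steps, in order, are: (1) given $\alpha \in \mathrm{O}_{2n+1}(C)$, use elementary orthogonal transformations from $\mathrm{EO}_{2n+1}(C)$ to bring $\alpha$ into a form that stabilizes a large hyperbolic summand — concretely, find $\varepsilon \in \mathrm{EO}_{2n+1}(C)$ such that $\varepsilon\alpha$ fixes $e_2,\dots,e_{n}$ and $e_{n+2},\dots,e_{2n+1}$ (the last $2n-2$ hyperbolic basis vectors), which amounts to solving a unimodular-row/completion problem over the PID $C$; (2) conclude that $\varepsilon\alpha \in \mathrm{O}_{2n+1}(C)$ but supported on the first three coordinates $\{1,2,n+2\}$ together with trivial action elsewhere, i.e. $\varepsilon\alpha \in \mathrm{O}_3(C)$ embedded in the standard way (or rather $\mathrm{O}_2(C)$ after splitting off the anisotropic line spanned by $e_1$, since $e_1$ is fixed by anything in $\mathrm{O}(\langle 2\rangle)\times\{1\}$ — this is why the statement says $\mathrm{O}_2(C)$, the orthogonal group of the residual rank-$2$ hyperbolic plane after the anisotropic line and the big hyperbolic part are dealt with); (3) read off $\alpha = \varepsilon^{-1}\cdot(\varepsilon\alpha) \in \mathrm{EO}_{2n+1}(C)\cdot\mathrm{O}_2(C) = \mathrm{O}_2(C)\cdot\mathrm{EO}_{2n+1}(C)$ using the normality Theorem~\ref{Aux4.10}. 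Steps (1) and the bookkeeping in (2) are where \cite[Lemma~2.2]{Rao1984} does the real work; I would cite it essentially verbatim for the special ring $C$ and the split form, checking only that the hypotheses there (field coefficients, one variable, $n \geq 2$) are met and that the odd form $\langle 2\rangle \perp \widetilde\psi_n$ falls under its scope — if \cite{Rao1984} is stated for general quadratic spaces with a hyperbolic summand of rank $\geq$ something, this is immediate.

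The main obstacle I anticipate is the \emph{matching of conventions}: \cite[Lemma~2.2]{Rao1984} is presumably stated for a quadratic space of the form $(\text{anisotropic kernel})\perp \mathbb{H}(R^n)$ with a conclusion like $\mathrm{O}(Q\perp\mathbb{H}(R^n)) = \mathrm{O}(Q\perp\mathbb{H}(R))\cdot\mathrm{EO}$, and I must verify that in our normalization the anisotropic kernel is exactly the rank-one space $\langle 2\rangle$ and that $\mathrm{O}(\langle 2\rangle \perp \mathbb{H}(R)) $ is what the paper calls $\mathrm{O}_2(C)$ — note the subscript $2$, not $3$, which only makes sense if the anisotropic line (which is forced to act by $\pm 1$, and $-1 \in \mathrm{EO}$ up to sign issues... actually $-1$ on a line is \emph{not} in the elementary group, so one must argue the sign is $+1$, i.e. that over $C$ with $2$ invertible the reflection in $e_1$ can be absorbed) is being suppressed. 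Resolving this sign/rank discrepancy cleanly — showing that the component of any $\alpha\in\mathrm{O}_{2n+1}(C)$ on the anisotropic line can be normalized away into $\mathrm{EO}_{2n+1}(C)$, so that the genuinely free part is the rank-$2n$ hyperbolic space and the reduction lands in $\mathrm{O}_2(C)$ — is the one step I would not want to hand-wave, and I would either pin down the exact statement of \cite[Lemma~2.2]{Rao1984} or give a short direct argument using that $\mathrm{SL}_{2}(C)=\mathrm{E}_2(C)$ fails in general but the relevant completion over a PID still goes through for rows of length $\geq 3$, which is why $2n+1\geq 7$, i.e. $n\geq 3$, would be the safe hypothesis — though the statement claims $n\geq 2$, so presumably \cite{Rao1984} handles the length-$5$ case by a special argument as well.
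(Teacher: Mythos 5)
Your proposal takes essentially the same route as the paper: the paper's entire justification for this theorem is the single sentence ``As a special case of \cite[Lemma~2.2]{Rao1984}, we obtain the following theorem,'' with no proof environment at all, so the citation of Rao's lemma is doing exactly the work you assign to it. The substantive issues you flag --- matching Rao's conventions to the form $\langle 2\rangle\perp\widetilde{\psi}_n$, absorbing the action on the anisotropic line into $\mathrm{O}_2(C)\cdot\mathrm{EO}_{2n+1}(C)$, and whether the stated bound $n\geq 2$ (rather than $n\geq 3$) is actually covered --- are left unaddressed by the paper as well, so your more cautious version is, if anything, the more complete account.
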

\begin{corollary}\label{Aux4.13}
    For $n\geq2$, the following equality holds, ${\rm O}_{2n+1}(C)\cap {\rm EO}(C)= {\rm EO}_{2n+1}(C).$
\end{corollary}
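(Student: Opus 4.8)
The inclusion ${\rm EO}_{2n+1}(C)\subseteq{\rm O}_{2n+1}(C)\cap{\rm EO}(C)$ is immediate from the definitions, so the content lies in the reverse inclusion. Given $\alpha\in{\rm O}_{2n+1}(C)\cap{\rm EO}(C)$, the plan is to peel off an elementary factor using Theorem~\ref{Aux4.12}. Since $n\geq2$, write $\alpha=\beta\cdot\gamma$ with $\gamma\in{\rm EO}_{2n+1}(C)$ and $\beta\in{\rm O}_2(C)$, the latter embedded via $1\perp\beta\in{\rm O}_3(C)\subseteq{\rm O}_{2n+1}(C)$ as an isometry supported on the first hyperbolic pair. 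Then $\beta=\alpha\gamma^{-1}\in{\rm EO}(C)$, so it suffices to prove
$$(1\perp{\rm O}_2(C))\cap{\rm EO}(C)\ \subseteq\ {\rm EO}_3(C);$$
the canonical embeddings ${\rm EO}_3(C)\subseteq{\rm EO}_5(C)\subseteq\cdots\subseteq{\rm EO}_{2n+1}(C)$ then give $\beta\in{\rm EO}_{2n+1}(C)$, hence $\alpha\in{\rm EO}_{2n+1}(C)$.

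To analyse $(1\perp{\rm O}_2(C))\cap{\rm EO}(C)$, recall that ${\rm O}_2(C)$ with respect to $\widetilde\psi_1$ is the union of the rotations ${\rm diag}(u,u^{-1})$, $u\in C^*$, and the flips $\left(\begin{smallmatrix}0&u\\u^{-1}&0\end{smallmatrix}\right)$. Each generator of ${\rm EO}_{2n+1}(C)$ is an Eichler--Siegel transvection ${\rm E}^o_{u,w}(z)$ with $u$ isotropic, $\phi(u,w)=0$, and $w=\pm e_1$ so that $q(w)=1$; the identity ${\rm E}^o_{u,w}(z)=\tau_{zu+w}\,\tau_{w}$ (where $\tau_a$ denotes the reflection in a vector $a$ with $q(a)\in C^*$) then exhibits it as a product of two reflections in vectors of $q$-value $1$. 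Consequently every element of ${\rm EO}(C)$ is such a product; in particular $\det=1$ on ${\rm EO}(C)$, which excludes the flips, so $\beta={\rm diag}(1,u,u^{-1})$ for some $u\in C^*$, and the spinor norm $\theta$ (the homomorphism to $C^*/(C^*)^2$ with $\theta(\tau_a)=[q(a)]$) is trivial on ${\rm EO}(C)$. Since ${\rm diag}(1,u,u^{-1})=\tau_{e_2-e_3}\,\tau_{e_2-u e_3}$ with $q(e_2-e_3)=-1$ and $q(e_2-u e_3)=-u$, we obtain $\theta(\beta)=[u]$; hence $[u]=1$, i.e.\ $u=c^2$ for some $c\in C^*$.

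Finally I would show ${\rm diag}(1,c^2,c^{-2})\in{\rm EO}_3(C)$ using the ${\rm Sym}^2$-representation $\pi\colon{\rm SL}_2(C)\to{\rm O}_3(C)$. After rescaling a basis of ${\rm Sym}^2(C^2)$ so that the discriminant form is $2\perp\widetilde\psi_1$, a direct computation gives $\pi\bigl(\begin{smallmatrix}1&t\\0&1\end{smallmatrix}\bigr)=F_1^1(-t/2)$ and $\pi\bigl(\begin{smallmatrix}1&0\\t&1\end{smallmatrix}\bigr)=F_1^2(2t)$, so $\pi$ carries ${\rm E}_2(C)$ onto ${\rm EO}_3(C)$, while $\pi\bigl({\rm diag}(c,c^{-1})\bigr)={\rm diag}(1,c^2,c^{-2})$. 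Now ${\rm diag}(c,c^{-1})\in{\rm E}_2(C)$ for every $c\in C^*$: $k[X]$ and $k[X^{-1}]$ are Euclidean, and for $C=k[X,X^{-1}]$ one combines ${\rm diag}(a,a^{-1})\in{\rm E}_2(k)$ for $a\in k^*$ with the standard expression of ${\rm diag}(X,X^{-1})$ as a product of elementary $2\times2$ matrices. Therefore $\beta={\rm diag}(1,c^2,c^{-2})\in{\rm EO}_3(C)$, which finishes the proof.

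The step that needs genuine care is the spinor-norm argument: one must check that $\theta$ is a well-defined homomorphism on the subgroup generated by reflections in anisotropic vectors (using $2\in C^*$), that ${\rm EO}(C)$ actually lies in that subgroup with trivial $\theta$ via the decomposition above for the precise generators $F_i^1,F_i^2$, and that the normalizations in the ${\rm Sym}^2$ picture are chosen so that the root subgroups land exactly on the $F_1^1,F_1^2$ and the torus on ${\rm diag}(1,c^2,c^{-2})$. The remaining ingredients --- the reduction through Theorem~\ref{Aux4.12}, the determinant obstruction, and the stabilization ${\rm EO}_3\hookrightarrow{\rm EO}_{2n+1}$ --- are formal.
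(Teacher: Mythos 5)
Your proof is correct, and while it begins exactly as the paper does (reduce via Theorem~\ref{Aux4.12} to showing ${\rm O}_2(C)\cap{\rm EO}(C)$ is elementary, then kill the flip by the determinant and reduce the rotation ${\rm diag}(u,u^{-1})$ to the case $u=c^2$ by the spinor norm), it finishes by a genuinely different route. The paper simply cites the computation ${\rm KO}_1(F)=\mathbb{Z}/2\mathbb{Z}\times\dot F/\dot F^2$ from Hahn--O'Meara for these two obstructions, whereas you rederive them from the factorization of an Eichler transvection as a product of two reflections in vectors of equal $q$-value; this is exactly the content of the cited result, so your version is more self-contained at the price of the normalization checks you flag. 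For the endgame, the paper disposes of ${\rm diag}(1,b^2,b^{-2})$ inside ${\rm O}_5(C)$ by the explicit identity displayed in its proof: it writes the embedded matrix as $\delta\cdot\sigma_\pi\cdot\delta\cdot\sigma_\pi^{-1}$ with $\pi=(2,4)$ and $\delta$ the image of ${\rm diag}(b,b^{-1})\in{\rm E}_2(C)$ under Theorem~\ref{Aux4.1}, using that permutation matrices normalize ${\rm EO}_5(C)$. You instead push ${\rm diag}(c,c^{-1})\in{\rm E}_2(C)$ through the ${\rm Sym}^2$ representation ${\rm SL}_2(C)\to{\rm O}_3(C)$ to land on ${\rm diag}(1,c^2,c^{-2})\in{\rm EO}_3(C)$ and then stabilize. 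Both endgames ultimately rest on the same fact, namely ${\rm diag}(c,c^{-1})\in{\rm E}_2(C)$ --- which, by the $2\times2$ Whitehead identity, holds over every commutative ring, so your case analysis on $C$ at that point is unnecessary. One small caveat common to both arguments: triviality of the spinor norm only gives $u\in\dot F^2$, and one must still observe that a unit of $C$ which is a square in the fraction field is a square in $C$; this is true for $k[X]$, $k[X^{-1}]$ and $k[X,X^{-1}]$, but deserves a line in either write-up.
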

\begin{proof}
    By Theorem~\ref{Aux4.12}, it is sufficient to show that ${\rm O}_2(C)\cap {\rm EO}(C)\subset {\rm EO_5}(C).$
    Let $\alpha\in{\rm O}_2(C)$, then we have $$\alpha=\begin{pmatrix}
        c&0\\0&c^{-1}
    \end{pmatrix} ~~{~\rm or} ~~~~\alpha=\begin{pmatrix}
        0&c\\c^{-1}&0
    \end{pmatrix},~{\rm where}~ c\in {\rm GL}_1(C).$$
    Also, since $\alpha\in {\rm EO}(C) $, the image of $\alpha$ in ${\rm KO_1}(F)=\mathbb{Z}/2\mathbb{Z}\times \dot F/\dot F^2$ should be zero, where $F$ is the field of quotients of $C$ and $\dot F$ is it's multiplicative group \cite[Section 5.3.5]{HahMer1989}. Thus, $\alpha$ must be of the form, $$\alpha=\begin{pmatrix}
        c&0\\0&c^{-1}
    \end{pmatrix}, ~~ {\rm where}~ c\in \dot F^2 ~{\rm and}~ c=b^2~ {\rm for ~some}~ b\in {\rm GL}_1(C).$$
  Let $\sigma_\pi$  be the permutation matrix corresponding to the permutation $\pi=(2,4)$, then we notice the following equality,
    $$\begin{pmatrix}
        1&0&0&0&0\\
        0&b^2&0&0&0\\
        0&0&1&0&0\\
        0&0&0&(b^{-1})^2&0\\
        0&0&0&0&1
    \end{pmatrix}=\begin{pmatrix}
        1&0&0&0&0\\
        0&b&0&0&0\\
        0&0&b^{-1}&0&0\\
        0&0&0&b^{-1}&0\\
        0&0&0&0&b
    \end{pmatrix}\cdot \sigma_\pi\cdot\begin{pmatrix}
        1&0&0&0&0\\
        0&b&0&0&0\\
        0&0&b^{-1}&0&0\\
        0&0&0&b^{-1}&0\\
        0&0&0&0&b
    \end{pmatrix}\cdot (\sigma_\pi)^{-1}.$$
    Hence, our assertion follows from Theorem~\ref{Aux4.1} and the fact that $diag(b,b^{-1})\in {\rm E}_2(C)$.
   
\end{proof}

The following result can be obtained by using Local-Global Principle \cite[Theorem~5.1]{AmbRao2014}.
\begin{theorem}\label{Aux4.14}
    Let $\beta \in {\rm O}_{2n+1}(R[X])$, where $n\geq3$. If  $\beta_{\mathfrak{m}} \in {\rm O}_{2n+1}(R_{\mathfrak{m}})\cdot {\rm EO}_{2n+1}(R_{\mathfrak{m}}[x])$, for each ${\mathfrak{m}}\in {\rm Max}(R)$, then $\beta \in {\rm O}_{2n+1}(R)\cdot {\rm EO}_{2n+1}(R[X])$.
    \end{theorem}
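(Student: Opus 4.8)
The plan is to reduce the statement to a relative form and then invoke the Local--Global Principle of \cite[Theorem~5.1]{AmbRao2014}. The first step is the standard Quillen substitution: set $\gamma := \beta(X)\,\beta(0)^{-1}\in {\rm O}_{2n+1}(R[X])$, so that $\gamma(0)=I_{2n+1}$, and observe that, because $\beta(0)\in {\rm O}_{2n+1}(R)$ and ${\rm EO}_{2n+1}(R[X])$ is normal in ${\rm O}_{2n+1}(R[X])$ by Theorem~\ref{Aux4.10}, proving $\beta\in {\rm O}_{2n+1}(R)\cdot {\rm EO}_{2n+1}(R[X])$ is equivalent to proving $\gamma\in {\rm EO}_{2n+1}(R[X])$.

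Next I would transfer the local hypothesis from $\beta$ to $\gamma$. Fixing $\mathfrak{m}\in {\rm Max}(R)$ and writing $\beta_{\mathfrak m}=\alpha\cdot\varepsilon$ with $\alpha\in {\rm O}_{2n+1}(R_{\mathfrak m})$ and $\varepsilon\in {\rm EO}_{2n+1}(R_{\mathfrak m}[X])$, evaluation at $X=0$ gives $\beta_{\mathfrak m}(0)=\alpha\,\varepsilon(0)$, whence $\gamma_{\mathfrak m}=\alpha\bigl(\varepsilon(X)\varepsilon(0)^{-1}\bigr)\alpha^{-1}$. Since $\varepsilon(X)\varepsilon(0)^{-1}\in {\rm EO}_{2n+1}(R_{\mathfrak m}[X])$ and ${\rm EO}_{2n+1}$ is normal in ${\rm O}_{2n+1}$ (Theorem~\ref{Aux4.10} again), this shows $\gamma_{\mathfrak m}\in {\rm EO}_{2n+1}(R_{\mathfrak m}[X])$ for every maximal ideal $\mathfrak m$.

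At this point $\gamma$ is an odd orthogonal matrix over $R[X]$ with $\gamma(0)=I_{2n+1}$ that is locally elementary, which is exactly the input required by the Local--Global Principle \cite[Theorem~5.1]{AmbRao2014}; applying it yields $\gamma\in {\rm EO}_{2n+1}(R[X])$, and hence $\beta=\gamma\,\beta(0)$ lies in ${\rm EO}_{2n+1}(R[X])\cdot {\rm O}_{2n+1}(R)={\rm O}_{2n+1}(R)\cdot {\rm EO}_{2n+1}(R[X])$, using normality once more to swap the factors.

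I do not anticipate a real obstacle here: the two substantive facts---normality of ${\rm EO}_{2n+1}$ and the Local--Global Principle for it---are already available (Theorem~\ref{Aux4.10} and \cite{AmbRao2014}), and everything else is the routine passage to $\gamma=\beta(X)\beta(0)^{-1}$ together with bookkeeping on where the factors live. If anything needs care, it is only checking that the hypotheses of \cite[Theorem~5.1]{AmbRao2014} match the relative $\gamma(0)=I_{2n+1}$ form we have arranged.
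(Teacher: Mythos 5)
Your argument is correct and is essentially the paper's own (the paper simply cites the Local--Global Principle of \cite[Theorem~5.1]{AmbRao2014} without writing out the details): the Quillen substitution $\gamma=\beta(X)\beta(0)^{-1}$, the transfer of the local hypothesis via normality, and the final application of the Local--Global Principle are exactly the intended reduction. The normality facts you use over $R[X]$ and $R_{\mathfrak m}[X]$ are supplied by Theorem~\ref{Aux4.10}, so nothing is missing.
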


    \begin{theorem}\label{Aux4.15}
        Let $I$ be an ideal of the ring $R$. Let  $A$ denote the quotient ring $R/I$ and $\phi : R\rightarrow A$ be the canonical homomorphism. Then,
   \begin{enumerate}[label=\emph{(\roman*)}]
            \item For any $\alpha \in {\rm TO}_{2n+1}(A)$ $($or $\rm {\Pi O}_{2n+1}(A)$ $)$, there exists $\beta \in {\rm TO}_{2n+1}(R)$ $($or ${\rm {\Pi O}_{2n+1}}(R)$$)$ such that $\phi(\beta)=\alpha.$
            \item If the induced homomorphism ${\rm GL}_1(R)\rightarrow {\rm GL}_1(A)$ is surjective, then for any $\alpha \in {\rm DO}_{2n+1}(A)$ $($or ${\rm {MO}_{2n+1}}(R)$$)$, there exists $\beta \in {\rm DO}_{2n+1}(R)$ $($or ${\rm {MO}_{2n+1}}(R)$$)$ such that $\phi(\beta)=\alpha.$
            \item If $\beta \in {\rm TO}_{2n+1}(R)$ and 
            $\phi(\beta)=I_{2n+1}$, then $\beta \in {\rm EO}_{2n+1}(R,I).$ 
        \end{enumerate}
    \end{theorem}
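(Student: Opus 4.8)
The plan is to treat the three assertions separately, since they concern different subgroups with different defining features. For part (i), recall that ${\rm TO}_{2n+1}(R)$ is generated by the elementary generators $F_i^1(z)$ and $F_{i}^2(1/2)$, so given $\alpha\in {\rm TO}_{2n+1}(A)$ written as a word $\prod F_{i}^1(\bar z_k)\cdot F_j^2(1/2)^{\pm}$ in these generators, I would simply lift each scalar $\bar z_k\in A$ to some $z_k\in R$ (using surjectivity of $\phi$ on $R$ itself) and form the corresponding word $\beta=\prod F_i^1(z_k)\cdot F_j^2(1/2)^{\pm}\in {\rm TO}_{2n+1}(R)$; since $\phi$ is a ring homomorphism it carries each generator to the corresponding generator and hence $\phi(\beta)=\alpha$. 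For $\Pi{\rm O}_{2n+1}(A)$ the argument is even easier: a permutation matrix $\sigma_\pi$ with $\pi\delta=\delta\pi$ has all entries in $\{0,1\}$, so its entrywise lift is already a permutation matrix in $\Pi{\rm O}_{2n+1}(R)$ mapping onto it. Part (ii) is the same idea with one extra ingredient: a diagonal orthogonal matrix $\mathrm{diag}(\bar d_0,\bar d_1,\dots,\bar d_{2n})$ has $\bar d_0=\pm 1$ and $\bar d_{\delta(i)}=\bar d_i^{-1}$, so I lift each $\bar d_i$ ($1\le i\le n$) to a unit $d_i\in {\rm GL}_1(R)$ using the assumed surjectivity ${\rm GL}_1(R)\twoheadrightarrow{\rm GL}_1(A)$, lift $\bar d_0$ to $1$ or $-1$ as appropriate, and set $d_{\delta(i)}=d_i^{-1}$; the resulting diagonal matrix is orthogonal by the criterion in the preliminaries and maps to $\alpha$. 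A monomial matrix is a product of a permutation matrix and a diagonal matrix, so combining the two lifts handles ${\rm MO}_{2n+1}$.

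The substance is in part (iii). Here $\beta\in {\rm TO}_{2n+1}(R)$ reduces to the identity mod $I$, and I must show $\beta\in {\rm EO}_{2n+1}(R,I)$, the relative elementary subgroup. The key structural fact I would exploit is the description following the definition of ${\rm TO}_{2n+1}$: every element of ${\rm TO}_{2n+1}(R)$ has the block form $1\perp\begin{pmatrix}\gamma&\delta\\ O&(\gamma^T)^{-1}\end{pmatrix}$ with $\gamma$ upper-triangular unipotent and $\gamma^{-1}\delta\in{\rm Alt}_n(R)$, and it factors as $\bigl(1\perp\begin{pmatrix}\gamma&O\\ O&(\gamma^T)^{-1}\end{pmatrix}\bigr)\cdot\bigl(1\perp\begin{pmatrix}I_n&\gamma^{-1}\delta\\ O&I_n\end{pmatrix}\bigr)$. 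The hypothesis $\phi(\beta)=I_{2n+1}$ forces $\gamma\equiv I_n$ and $\delta\equiv O$ modulo $I$, hence $\gamma\in{\rm GL}_n(R,I)$ is a unipotent upper-triangular matrix over $I$ (i.e. $\gamma-I_n$ has entries in $I$) and $\gamma^{-1}\delta\in {\rm Alt}_n(I)$. Now the second factor lies in ${\rm EO}_{2n+1}(I)\subseteq {\rm EO}_{2n+1}(R,I)$ by Theorem~\ref{Aux4.2}. For the first factor, a unipotent upper-triangular $\gamma$ with off-diagonal entries in $I$ lies in the relative elementary subgroup ${\rm E}_n(R,I)$ — one peels off the superdiagonal entries one ``level'' at a time by elementary generators $e_{ij}(a)$ with $a\in I$, exactly the standard argument that triangular unipotent matrices over $I$ are relatively elementary (this needs $n\ge 3$, which we have) — and then Theorem~\ref{Aux4.1}(ii) puts $1\perp\begin{pmatrix}\gamma&O\\ O&(\gamma^T)^{-1}\end{pmatrix}$ into ${\rm EO}_{2n+1}(R,I)$. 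Multiplying the two factors gives $\beta\in {\rm EO}_{2n+1}(R,I)$.

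The main obstacle I anticipate is purely in part (iii): making precise and correct the claim that a unipotent upper-triangular matrix $\gamma$ whose entries above the diagonal lie in $I$ belongs to ${\rm E}_n(R,I)$ rather than merely to ${\rm E}_n(R)$ — one must be careful that the peeling-off process uses only elementary generators with parameters in $I$, and that conjugations arising along the way stay inside the relative subgroup (which is automatic since ${\rm E}_n(R,I)$ is by definition the normal closure of the $I$-elementary generators in ${\rm E}_n(R)$, and $\gamma$ itself, being $I$-triangular, already normalizes appropriately). A secondary subtlety is bookkeeping the extra leading $1$ and the block positions when invoking Theorems~\ref{Aux4.1} and~\ref{Aux4.2}, but that is routine given the canonical embedding conventions fixed earlier. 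Everything else reduces to lifting generators word-by-word through the ring homomorphism $\phi$, which is formal.
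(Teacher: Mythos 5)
Your treatment of parts (i) and (ii) is correct and is essentially the paper's argument: lift word-by-word through the surjection for ${\rm TO}_{2n+1}$ and ${\rm \Pi O}_{2n+1}$, use surjectivity of ${\rm GL}_1(R)\rightarrow{\rm GL}_1(A)$ for ${\rm DO}_{2n+1}$, and combine via ${\rm MO}_{2n+1}={\rm \Pi O}_{2n+1}\cdot{\rm DO}_{2n+1}$.

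Part (iii), however, rests on a structural claim that is not available and is in fact false as you state it. You assert that \emph{every} element of ${\rm TO}_{2n+1}(R)$ has the block form $1\perp\begin{pmatrix}\gamma&\delta\\ O&(\gamma^T)^{-1}\end{pmatrix}$ with $\gamma$ upper-triangular unipotent. But ${\rm TO}_{2n+1}(R)$ is \emph{defined} as the subgroup generated by the matrices $F_i^1(z)$ ($1\le i\le n$, $z\in R$) and $F_j^2(\tfrac12)$, and these generators themselves are not of that block form: $F_i^1(z)=I_{2n+1}+e_{1,n+i+1}(z)-e_{i+1,1}(2z)-e_{i+1,n+i+1}(z^2)$ has nonzero off-diagonal entries in the first row and first column, and likewise $F_j^2(\tfrac12)$. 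The Remark following the definition only asserts the one-directional inclusion that such block matrices \emph{lie in} ${\rm TO}_{2n+1}(R)$; the group generated is strictly larger (this is precisely one of the places where the odd case differs from the even case, where ${\rm TO}_{2n}(R)$ really is the set of such block matrices). Consequently your factorization of $\beta$, and hence the appeal to Theorems~\ref{Aux4.1} and~\ref{Aux4.2}, only covers the elements of ${\rm TO}_{2n+1}(R)$ that happen to have trivial first row and column; the general $\beta$ with $\phi(\beta)=I_{2n+1}$ is not reached. (Your reduction of an $I$-unipotent triangular $\gamma$ to ${\rm E}_n(R,I)$ is fine; that is not where the problem lies.) The paper instead argues directly on the generators: $F_i^1(z)$ reduces to the identity mod $I$ only when $z\in I$, in which case it already lies in ${\rm EO}_{2n+1}(I)\subset{\rm EO}_{2n+1}(R,I)$, while $F_j^2(\tfrac12)$ never reduces to the identity. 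To repair your argument you would need either a correct normal form for arbitrary elements of ${\rm TO}_{2n+1}(R)$ (accounting for the first row and column) or an argument reducing the general case to the generator case.
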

    \begin{proof}
        Assertion (i) follows, since the map $\phi$ is surjective.  If the mapping ${\rm{GL}_1}(R)\rightarrow {\rm{GL}_1}(A)$ is surjective and $\alpha = diag(d_0,d_1,\dots,d_n,d_1^{-1},\dots,d_n^{-1})$, where $d_i\in {\rm{GL}_1}(A)$, for $i=0,1,\dots,n$, we can find $b_i\in {\rm{GL}_1}(R)$ such that $\phi(b_i)=d_i$ and $\phi(b_0)=d_0$, and put $\beta=diag(b_0,b_1,\dots,b_n,b_1^{-1},\dots,b_n^{-1}).$
        Thus, the Part  of (ii) pertaining to the group ${\rm DO}_{2n+1}(A)$ is proved. To prove the result for the subgroup ${\rm MO}_{2n+1}(A)$, we can use the relation, ${\rm {MO}}_{{2n+1}}(R)={\rm \Pi O}_{{2n+1}}(R)\cdot {\rm {DO}}_{{2n+1}}(R)={\rm {DO}}_{{2n+1}}(R) \cdot {\rm {\Pi O}}_{{2n+1}}(R)$ and Part (i).

We can prove (iii) for the generators of ${\rm  TO}_{2n+1}(R)$. Let $\beta=F_i^1(z)$. Then we have $\phi(\beta)=I_{2n+1}$. That is, $\phi(I_{2n+1}+e_{1,n+i+1}(z)-e_{i+1,1}(2z)-e_{i+1,n+i+1}(z^2))=I_{2n+1}$. Hence, $\phi(e_{1,n+i+1}(z))=\phi(e_{i+1,1}(2z))=\phi(e_{i+1,n+i+1}(z^2))=0$, which implies that $\beta\in {\rm EO}_{2n+1}(R,I)$. Let $\beta=F_{i}^2(\frac{1}{2})=I_{2n+1}+e_{1,i+1}(\frac{1}{2})-e_{n+i+1,1}(1)-e_{n+i+1,i+1}(\frac{1}{4})$, then clearly $\beta$ will not map to the identity.
Hence, we have the result.    
\end{proof}
   
   Using Theorem~\ref{Aux4.12}, we can decompose ${\rm O}_7(k)$ as ${\rm O}_2(k)\cdot {\rm EO}_7(k)$, for a field $k$. Thus, we can identify the generators of ${\rm O}_7(k)$ by $G_i={\rm O}_2(k)\cdot F_i^1(z)$, for $1\leq i\leq 3$ and $G_j={\rm O}_2(k)\cdot F_{j-3}^2(z)$, for $4\leq j \leq 6$.

\begin{lemma}\label{Aux4.16}
For a field $k$, $ {\rm O}_7(k)={\rm TO}_7(k)\cdot {\rm MO}_7(k)\cdot {\rm TO}_7(k)$.
\end{lemma}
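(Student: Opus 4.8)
The plan is to prove the Bruhat-type decomposition $\mathrm{O}_7(k) = \mathrm{TO}_7(k)\cdot\mathrm{MO}_7(k)\cdot\mathrm{TO}_7(k)$ by reducing to a statement about the elementary subgroup and then pushing matrices into the required normal form by Gaussian-style elimination. First I would use Theorem~\ref{Aux4.12} to write any $\alpha\in\mathrm{O}_7(k)$ as $\alpha = \mu\cdot\varepsilon$ with $\mu\in\mathrm{O}_2(k)$ (embedded via $1\perp(\cdot)\perp\cdots$, i.e. in $\mathrm{MO}_7(k)$ after possibly multiplying by a diagonal orthogonal factor) and $\varepsilon\in\mathrm{EO}_7(k)$; since $\mathrm{O}_2(k)\subset\mathrm{MO}_7(k)$ under the relevant embedding, and $\mathrm{MO}_7(k)=\mathrm{\Pi O}_7(k)\cdot\mathrm{DO}_7(k)$ normalizes things nicely, it suffices to prove $\mathrm{EO}_7(k)=\mathrm{TO}_7(k)\cdot\mathrm{MO}_7(k)\cdot\mathrm{TO}_7(k)$, or even the weaker $\mathrm{EO}_7(k)\subseteq\mathrm{TO}_7(k)\cdot\mathrm{MO}_7(k)\cdot\mathrm{TO}_7(k)$. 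The reverse containment is immediate because $\mathrm{TO}_7(k)$ and $\mathrm{MO}_7(k)$ both sit inside $\mathrm{O}_7(k)$.

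Next I would run the elimination. Recall from Section~3 that $\mathrm{TO}_7(k)$ contains all matrices $\begin{pmatrix}1&O&O\\O&\gamma&\delta\\O&O&(\gamma^T)^{-1}\end{pmatrix}$ with $\gamma$ upper triangular unipotent and $\gamma^{-1}\delta\in\mathrm{Alt}_n(k)$, and $\mathrm{TO}'_7(k)$ contains the lower-triangular analogues; over a field these play the role of the two opposite Borel-unipotent radicals. The strategy is the classical one for Chevalley groups of type $B_n$: given $\varepsilon\in\mathrm{EO}_7(k)$, I would clear out the first column below the pivot using left multiplication by elements of $\mathrm{TO}'_7(k)$ (or by $\mathrm{TO}_7(k)$ together with a permutation from $\mathrm{\Pi O}_7(k)$ to move a nonzero entry into pivot position), then clear the first row to the right of the pivot using right multiplication by $\mathrm{TO}_7(k)$, and iterate down the isotropic flag; the orthogonality constraint forces the complementary (lower-right) block to be handled simultaneously and consistently, which is exactly what the $\mathrm{Alt}_n$ condition in the definition of $\mathrm{TO}_7$ encodes. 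What remains after full reduction is a monomial orthogonal matrix, which lies in $\mathrm{MO}_7(k)$, giving the decomposition with the monomial factor sandwiched between two $\mathrm{TO}_7$ factors (using that conjugating $\mathrm{TO}'_7$ by a suitable permutation, or inverting, lands back in $\mathrm{TO}_7$, and $\mathrm{MO}_7$ normalizes the diagonal part).

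The main obstacle I anticipate is bookkeeping the interaction between the three-part structure $k^7 = k\oplus k^3\oplus k^3$ with the distinguished anisotropic line (the "$2\perp\widetilde\psi_r$" form) and the two maximal isotropic $3$-spaces: unlike the even case treated in \cite{SusKop1977}, the anisotropic coordinate $v_0$ cannot be eliminated by transvections and must be absorbed into the monomial/diagonal middle factor, so I need to check that after clearing the isotropic blocks the surviving entry in the $(0,0)$ slot (and its compatibility with $d_0^{-1}=d_0$, i.e. $d_0=\pm1$) genuinely lands in $\mathrm{MO}_7(k)$ rather than leaving an exotic unipotent remainder. Concretely, I would argue that the leftover matrix fixes the flag of isotropic subspaces used in the elimination, is orthogonal, and has at most one nonzero entry in each row and column by construction, so it is monomial; the anisotropic block contributes only a $\pm1$, which is a permitted monomial entry. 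I expect this verification, plus the careful choice of which permutations from $\mathrm{\Pi O}_7(k)$ are needed to bring nonzero entries into pivot positions without destroying earlier-cleared rows, to be the delicate part, but it is a finite, explicit check in dimension $7$ that GAP can corroborate, exactly as the authors indicate for the surrounding results.
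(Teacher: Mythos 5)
Your opening move (splitting $\alpha\in{\rm O}_7(k)$ as ${\rm O}_2(k)\cdot{\rm EO}_7(k)$ via Theorem~\ref{Aux4.12}) is the same as the paper's, but from there you diverge: the paper does not run a Bruhat-style elimination at all. It lists the resulting generators $G_i={\rm O}_2(k)\cdot F_i^1(z)$, $G_j={\rm O}_2(k)\cdot F_{j-3}^2(z)$ and, for each one, exhibits explicit $\tau_{1},\tau_{2}\in{\rm TO}_7(k)$ with $\tau_1 G\tau_2\in{\rm MO}_7(k)$; the only nontrivial work is a set of GAP-verified commutator identities showing that $F_i^2(z)$ lies in the group generated by the $H_i$ for every $z$, so that $\tau_{2j}=F_{j-3}^2(-z)$ is an admissible choice. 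Your proposal replaces this with a Gaussian-elimination argument, and that is where it breaks.

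The concrete gap is the anisotropic coordinate. The elements of ${\rm TO}_7(k)$ and ${\rm TO}'_7(k)$ you propose to eliminate with are the block matrices $\left(\begin{smallmatrix}1&O&O\\O&\gamma&\delta\\O&O&(\gamma^T)^{-1}\end{smallmatrix}\right)$ and their lower analogues; every such matrix fixes $e_1$ and has first row $(1,O,O)$, so $(\tau_1\alpha\tau_2)_{11}=\alpha_{11}$ for all your moves. But a monomial matrix in ${\rm O}_7(k)$ must have $(1,1)$ entry $\pm1$ (the permutation has to fix the unique anisotropic index and $d_0=d_0^{-1}$), whereas a general $\alpha\in{\rm O}_7(k)$ does not: the reflection in $e_1+ae_2+e_5$ has $\alpha_{11}=\frac{a-1}{a+1}$, which can be any value other than $1$. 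So no amount of elimination with those elements reaches ${\rm MO}_7(k)$, and your anticipated resolution --- that ``the anisotropic block contributes only a $\pm1$'' --- is precisely the assertion that fails. The fix is to use the generators $F_i^1(z)$, $F_i^2(z)$ of ${\rm TO}_7(k)$, which have nonzero entries in the first row and column and genuinely move $e_1$; proving that enough of these (in particular $F_i^2(z)$ for arbitrary $z$) lie in ${\rm TO}_7(k)$ is the actual content of the paper's proof and is absent from yours. Two further unaddressed points: your column-clearing produces a left factor in ${\rm TO}'_7(k)$, and conjugating it into ${\rm TO}_7(k)$ by the block-swap permutation strands that permutation on the outer left, outside the asserted ${\rm TO}\cdot{\rm MO}\cdot{\rm TO}$ shape (getting the same unipotent on both sides is the nontrivial part of Bruhat); and the absorption of the ${\rm O}_2(k)$ factor into the middle requires ${\rm MO}_7\cdot{\rm TO}_7\subseteq{\rm TO}_7\cdot{\rm MO}_7\cdot{\rm TO}_7$, which does not follow from Lemma~\ref{Ele5.6} alone since ${\rm \Pi O}_7(k)$ does not normalize ${\rm TO}_7(k)$.
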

\begin{proof}
It is sufficient to prove that for each $G_i$, we can get some $\tau_{1i}, \tau_{2i}\in {\rm TO}_7(k)$ and $\mu_i\in {\rm MO}_7(k)$ such that $\tau_{1i}\cdot G_i\cdot \tau_{2i}=\mu_i$. For $G_i$ ($1\leq i\leq 3$), choose $\tau_{1i}=I_7$ and $\tau_{2i}=F_i^1(-z)$. Then, we get $\mu_i$ as $[c]\perp[c^{-1}]$ or $[c]\top[c^{-1}]$ for a non-zero $c \in k$. We need to obtain the combinations, $\tau_{14}\cdot G_4 \cdot \tau_{24}\in {\rm MO}_7(k)$, 
         $\tau_{15}\cdot G_5\cdot \tau_{25}\in {\rm MO}_7(k)$, and
         $\tau_{16}\cdot G_6 \cdot \tau_{26}\in {\rm MO}_7(k).$
   \vspace{2mm}

    If $z=0$, it is obvious. Let $z\neq 0$, then we obtain the following relations,\begin{align*}  
     F_1^2(z)&=([H_3,[H_4,H_6]]\cdot[F_3^1(-\frac{z^2}{2}),H_4]^{-1})^2\\[4pt]&=([F_3^1(z),[F_1^2(-\frac{1}{2}),F_3^2(-\frac{1}{2})]]\cdot[F_3^1(-\frac{z^2}{2}),F_1^2(-\frac{1}{2})])^2,\\[4pt]
    F_2^2(z)&=([H_3,[H_5,H_6]]\cdot[F_3^1(-\frac{z^2}{2}),H_5]^{-1})^2\\[4pt]&=([F_3^1(z),[F_2^2(-\frac{1}{2}),F_3^2(-\frac{1}{2})]]\cdot[F_3^1(-\frac{z^2}{2}),F_2^2(-\frac{1}{2})])^2,~~{\rm and}\\[4pt]   
    F_3^2(z)&=([F_2^1(\frac{z^2}{2}),H_6]\cdot[H_2,[H_5,H_6]]^{-1})^2\\[4pt]&=([F_2^1(\frac{z^2}{2}),[F_3^2(-\frac{1}{2})])^2 \cdot[F_2^1(z),F_2^2(-\frac{1}{2}),F_3^2(-\frac{1}{2})]).
    \end{align*} 
By taking $\tau_{1j}=I$ and $\tau_{2j}=F_i^2(-z)$, for $4\leq j\leq 6, $ we get the required matrix as $[c]\perp[c^{-1}]$ or $[c]\top[c^{-1}]$, which is in ${\rm MO}_7(k)$.
\end{proof}

\begin{theorem}\label{Aux4.17}
For a field $k$, we have ${\rm O}_{2n+1}(k)={\rm TO}_{2n+1}(k)\cdot {\rm MO}_{2n+1}(k)\cdot {\rm TO}_{2n+1}(k)$.
\end{theorem}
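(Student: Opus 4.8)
The goal is to bootstrap from the rank-$7$ case (Lemma~\ref{Aux4.16}) to arbitrary odd rank $2n+1$ with $n \geq 3$. The plan is to proceed by induction on $n$, the base case $n = 3$ being exactly Lemma~\ref{Aux4.16}. For the inductive step, I would combine two ingredients: first, the stabilization result Theorem~\ref{Aux4.12}, which writes ${\rm O}_{2n+1}(k) = {\rm O}_2(k)\cdot{\rm EO}_{2n+1}(k)$, reducing the problem to factoring an arbitrary \emph{elementary} orthogonal matrix together with a small ${\rm O}_2$-block (which can be absorbed into ${\rm MO}_{2n+1}(k)$ as in the proof of Lemma~\ref{Aux4.16}); and second, a Gaussian-elimination argument directly on ${\rm EO}_{2n+1}(k)$ that uses the generators $F_i^1(z)$, $F_i^2(z)$ to clear the first row and column of an arbitrary $\alpha \in {\rm EO}_{2n+1}(k)$ down to a monomial matrix.

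Concretely, the induction proceeds as follows. Given $\alpha \in {\rm O}_{2n+1}(k)$ with $n > 3$, use Theorem~\ref{Aux4.12} to write $\alpha = \mu_0 \cdot \epsilon$ with $\mu_0 \in {\rm O}_2(k) \hookrightarrow {\rm MO}_{2n+1}(k)$ and $\epsilon \in {\rm EO}_{2n+1}(k)$; so it suffices to treat $\epsilon \in {\rm EO}_{2n+1}(k)$. Working over the field $k$, the first column of $\epsilon$ is a unimodular isotropic vector (isotropic because $q$ is preserved and $q$ of a standard basis column vanishes in the relevant coordinates). Using left and right multiplication by generators $F_{i,j}^3, F_{i,j}^4, F_{i,j}^5$ of ${\rm TO}_{2n+1}(k)$ — which by the remarks after the definition of ${\rm TO}_{2n+1}$ realize all the unipotent row/column operations within a fixed triangular block — I would clear all but one entry in the first row and first column, and similarly (by symmetry via $\delta$) in the rows and columns indexed $n+2, \dots, 2n$ that pair with them, reducing $\epsilon$ to a matrix of the form $\mu \cdot \alpha'$ where $\mu$ is monomial on the outer coordinates and $\alpha'$ sits in a copy of ${\rm O}_{2n-1}(k)$ embedded in the middle (via the canonical embedding of the preliminaries, read backwards). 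Then apply the inductive hypothesis to $\alpha' \in {\rm O}_{2n-1}(k)$ to get $\alpha' = \tau_1' \mu' \tau_2'$, and observe that each of the four subgroups ${\rm TO}, {\rm MO}$ is stable under the canonical embedding (the ${\rm TO}$ case being the remark after the definition of ${\rm TO}_{2n+1}$, the ${\rm MO}$ and ${\rm \Pi O}$ cases being immediate), so the factorization lifts. Finally collect the monomial factors — using ${\rm MO}_{2n+1} = {\rm \Pi O}_{2n+1}\cdot{\rm DO}_{2n+1} = {\rm DO}_{2n+1}\cdot{\rm \Pi O}_{2n+1}$ and the fact that ${\rm TO}_{2n+1}$ is normalized, up to absorbing diagonal/permutation conjugates back into ${\rm TO}_{2n+1}$ and ${\rm MO}_{2n+1}$, by monomial matrices — to obtain the desired three-term product $\tau_1 \mu \tau_2$.

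The main obstacle I anticipate is the bookkeeping in the Gaussian-elimination step: one must check that the row and column operations needed to clear the first row/column of $\epsilon$ really do live in ${\rm TO}_{2n+1}(k)$ (i.e. involve only the admissible triangular-block generators and not a full elementary matrix of the wrong shape), and that after clearing, the residual block genuinely lands in the embedded ${\rm O}_{2n-1}(k)$ rather than merely in ${\rm GL}$. Over a field this is manageable — invertibility of the pivot lets one normalize — but the orthogonality constraint couples the $(v', v'')$ halves of each column through the form $\phi$, so one has to clear the two halves compatibly, exactly the kind of even/odd subtlety the introduction warns about. A secondary nuisance is commuting the various monomial factors past ${\rm TO}_{2n+1}$-factors: conjugation of a ${\rm TO}_{2n+1}$-generator by a permutation or diagonal matrix need not stay inside ${\rm TO}_{2n+1}$, so one may need to enlarge the middle monomial factor and re-absorb, or alternatively prove the cleaner normal-form statement ${\rm O}_{2n+1}(k) = {\rm TO}_{2n+1}(k)\cdot{\rm MO}_{2n+1}(k)\cdot{\rm TO}'_{2n+1}(k)$ first and convert. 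I would handle this by carrying the induction with enough flexibility in which of ${\rm TO}_{2n+1}, {\rm TO}'_{2n+1}$ appears on each side, mirroring how the rank-$7$ computation in Lemma~\ref{Aux4.16} was organized.
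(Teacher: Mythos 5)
Your overall skeleton --- induction on $n$ with Lemma~\ref{Aux4.16} as base case, a triangular ``Gaussian elimination'' by ${\rm TO}$-type matrices to cut the rank by two, then lifting the inductive factorization through the canonical embedding --- is exactly the paper's strategy, and the preliminary reduction through Theorem~\ref{Aux4.12} is harmless (though unnecessary: the elimination works directly on any $\alpha\in{\rm O}_{2n+1}(k)$). But the elimination step as you describe it is aimed at the wrong coordinate, and this is a genuine gap rather than bookkeeping. The Gram matrix is $2\perp\widetilde{\psi}_n$, so $q(e_1)=1$ and the first column $\epsilon\cdot e_1$ of an orthogonal matrix is \emph{anisotropic}, contrary to your parenthetical claim that it is isotropic; moreover the generators $F^3_{i,j},F^4_{i,j},F^5_{i,j}$ you propose to use are all of the form $1\perp oe_{\cdot\cdot}$, hence fix $e_1$ and leave the first row (under left multiplication) and first column (under right multiplication) untouched where it matters. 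Even if you could normalize the first column to $\pm e_1$, you would have split off the rank-one anisotropic summand and landed in ${\rm O}_{2n}(k)$, which breaks the induction on odd groups. The rank reduction must strip a \emph{hyperbolic pair}, not the distinguished odd coordinate.

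The paper's proof does precisely this: it takes $v=(a,v',v'')^T$ to be the \emph{second} column (which is isotropic, being the image of $e_2$), uses a triangular $\gamma$ together with Lemma~\ref{Aux4.3} to produce $\delta\in{\rm Alt}_n(k)$ with $v'=\delta\cdot v''$, so that the block matrix
$\left(\begin{smallmatrix}1&O&O\\ O&(\gamma^T)^{-1}&O\\ O&O&\gamma\end{smallmatrix}\right)\cdot\left(\begin{smallmatrix}1&O&O\\ O&I_n&-\delta\\ O&O&I_n\end{smallmatrix}\right)\in{\rm TO}_{2n+1}(k)$
compresses $v$ to a single nonzero entry below the first position; it then does the same to the matching row on the right, observes that the $(n+2)^{\rm nd}$ column and $\delta(i+1)^{\rm th}$ row are forced by orthogonality, and deletes the resulting hyperbolic pair to obtain $\alpha_1\in{\rm O}_{2n-1}(k)$. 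This is exactly the mechanism that resolves the coupling between $v'$ and $v''$ that you flag as your main obstacle: the alternating matrix $\delta$ from Lemma~\ref{Aux4.3} is what clears the two halves compatibly. Your closing worry about commuting monomial factors past ${\rm TO}$-factors also does not arise in the paper's argument, since the monomial factor produced by induction stays in the middle of the three-term product.
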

\begin{proof}
    We can prove the result by induction on $n$. For $n=3$, by Lemma~\ref{Aux4.16}, the result holds. Suppose $n>3$, and $\alpha\in \mathrm{O}_{2n+1}(k)$. 
     Let $v$ be the $2^{\rm nd}$ column of $\alpha$, which is given by  $v=\begin{pmatrix}
a&v^\prime&v^{\prime\prime}
    \end{pmatrix}^T$,  where $v^\prime=\begin{pmatrix}
        v_1^\prime& \dots&v_n^\prime
    \end{pmatrix}^T$ and $v^{\prime\prime}=\begin{pmatrix}
        v_1^{\prime\prime}&\dots&v_n^{\prime\prime}\end{pmatrix}^T$.
    Assuming $v^{\prime\prime}\neq 0$, there exists a lower triangular matrix $\gamma$ such that $\gamma \cdot v''$ has only one non-zero entry other than the $1^{\rm st}$ position.
     Since $(v')^T\cdot v''=q(v)=0$ and $o(v'')=k$, by Lemma~\ref{Aux4.3}, there exists $\delta \in {{\rm Alt}}_n(k)$ such that $v'=\delta\cdot v''$. The matrix
$$\sigma=\begin{pmatrix}
   1&O&O\\O& (\gamma^T)^{-1} & O \\
   O& O & \gamma
\end{pmatrix}\cdot \begin{pmatrix}
    1&O&O\\O&I_n & -\delta \\
   O& O & I_n
\end{pmatrix}
\text {lies in}\ \mathrm{TO}_{2n+1}(k).$$ 
Also, $\sigma\cdot v$ has only one non-zero entry other than the $1^{\rm st}$ position, 
$$\sigma\cdot v
 = \begin{pmatrix}
a\\(\gamma^T)^{-1}\cdot v'-(\gamma^T)^{-1}\cdot\delta \cdot v'' \\
\gamma\cdot v''
\end{pmatrix} 
= \begin{pmatrix}
a\\O \\
\gamma\cdot v''
\end{pmatrix}.$$

If $v''=0$, then $v'\neq0$, and there exists an upper triangular matrix $\gamma$ such that $\gamma\cdot v'$ has only one non-zero entry other than $a$. In this case, consider $$\sigma=\begin{pmatrix}
    1&O&O\\O&\gamma&O\\O&O&(\gamma^T)^{-1}
\end{pmatrix},$$ which lies in ${\rm TO}_{2n}(k)$ and $\sigma\cdot v$ has only one non-zero entry. Thus,
$$\sigma\cdot v 
= \begin{pmatrix}
a\\\gamma \cdot v' \\
(\gamma^T)^{-1} \cdot v''
\end{pmatrix} = \begin{pmatrix}
a\\\gamma \cdot v' \\O
\end{pmatrix}.$$

It suffices to show that $\sigma\cdot \alpha\in {\rm TO}_{2n+1}(k)\cdot \rm {\rm MO}_{2n+1}(k)\cdot {\rm TO}_{2n+1}(k)$ (Since ${\rm TO}_{2n+1}(k)$ is a subgroup). Assume that the $2^{\rm nd}$ column of $\alpha$ has only one non-zero entry $s$, other than the $1^{\rm st}$ position, say $(i+1)^{\rm th}$. 

\vspace{1mm}

 Let $u=(t,u',u'')$ denote the $(i+1)^{\rm th}$ row of $\alpha$. Clearly, $u_1'=s\neq0$. Hence, there exists an upper triangular matrix $\gamma$ with $u'\cdot \gamma=s\cdot e_1^T$ and an alternating matrix $\delta$ such that $u''=u'\cdot \delta$. Replacing $\alpha$ by, $$\alpha\cdot \begin{pmatrix}
    1&O&O\\O&I_n&-\delta\\
    O&O&I_n
\end{pmatrix}\cdot 
\begin{pmatrix}
    1&O&O\\O&\gamma&O\\ 
    O&O&(\gamma^T)^{-1}
\end{pmatrix},$$ 
we may also assume that all the entries of the $(i+1)^{\rm th}$ row of $\alpha$ except the $2^{\rm nd}$ entry are zero (except for $t$). Here, it is easy to see that $(n+2)^{\rm th}$ column of $\alpha$ is equal to $s^{-1}\cdot e_{\delta(i+1)}$, and the $(\delta(i+1))^{\rm th}$ row is equal to $s^{-1}\cdot e_{n+2}^T$. 
 Let $\alpha_1$ denote the matrix obtained from $\alpha$ by deleting the $2^{\rm nd}$ and $(n+2)^{\rm th}$ columns and $i^{\rm th}$ and $(\delta(i))^{\rm th}$ rows (that is, delete the terms involving $a$ and $a^{-1}$ in the construction). Clearly, $\alpha_1\in {\rm O}_{2n-1}(k)$, hence by induction hypothesis there exist $\tau_1,\tau_2\in {\rm TO}_{2n-1}(k)\ \text{such that} \ \tau_1\cdot\alpha_1\cdot\tau_2\in \rm MO_{2n-1}(k).$
 If we denote $\lambda_1$, the matrix all of whose elements in the $(i+1)^{\rm th}$ and $(\delta(i+1))^{\rm th}$ columns and also in the $(i+1)^{\rm th}$ and $(\delta(i+1))^{\rm th}$ rows are zero, except for the elements in positions $(i+1,i+1)$ and $(\delta(i+1),\delta(i+1))$, which are equal to the unity element, and after deleting from which the $(i+1)^{\rm th}$ and $(\delta(i+1))^{\rm th}$ rows and $(i+1)^{\rm th}$ and $(\delta(i+1))^{\rm th}$ columns, we obtain the matrix $\tau_1.$
\vspace{1mm}

 Similarly, define $\lambda_2$, as the canonical image of $\tau_2$ in ${\rm O}_{2n+1}(k)$, then it is clear that $\lambda_1,\lambda_2\in {\rm TO}_{2n+1}(k)$ and $\lambda_1\cdot\alpha\cdot\lambda_2\in{\rm MO}_{2n+1}(k).$
    \end{proof}
 
   \section{Elementary orthogonal group over a Laurent Ring}
   Consider the orthogonal group over a Laurent ring, $R[X, X^{-1}]$. Our aim is to decompose a matrix\\[5pt] in ${\rm EO}_{2n+1}(R[X, X^{-1}])\cap {\rm O}_{2n+1}(R[X,X^{-1}],{\mathfrak m}[X,X^{-1}])$ as a product of matrices from ${\rm EO}_{2n+1}(R[X],{\mathfrak m}[X])$\\[5pt] and ${\rm EO}_{2n+1}(R[X,X^{-1}],{\mathfrak m}[X,X^{-1}])$. \emph{This is a crucial step for proving Horrocks' theorem}. 
   \vspace{3mm}

        Let $\theta_n$ denote the matrix $diag(X,\dots X,1,\dots,1)$, where $X$ appears in the first $n$ diagonal positions.
\begin{lemma}\label{Ele5.1}
    For $n\geq3$, if $f\in R[X]$ and $\alpha\in {\rm O}_{2n+1}(R)$, then the matrices $\theta_{n+1}\cdot(\alpha\cdot F_i^j(X\cdot f)\cdot \alpha^{-1})\cdot \theta_{n+1}^{-1}$\\[5pt] and $\theta_{n+1}^{-1}\cdot(\alpha\cdot F_i^j(X\cdot f)\cdot \alpha^{-1})\cdot \theta_{n+1}$, lies in ${\rm EO}_{2n+1}(R[X])$,
    for $1\leq j\leq 2$ and $1 \leq i \leq n$.
\end{lemma}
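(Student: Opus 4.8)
The statement to prove, Lemma~\ref{Ele5.1}, concerns conjugates of the generators $F_i^j(X \cdot f)$ of ${\rm EO}_{2n+1}$ by a constant orthogonal matrix $\alpha \in {\rm O}_{2n+1}(R)$, and asks that after a further conjugation by the diagonal matrix $\theta_{n+1} = \mathrm{diag}(X,\dots,X,1,\dots,1)$ (or its inverse) the result still lands in ${\rm EO}_{2n+1}(R[X])$. The natural approach is to pass to the transvection description. By the Remark following Definition~2.10, the generators $F_i^1(z)$ and $F_i^2(z)$ are exactly the transvections ${\rm E}^o_{e_{i+1},e_1(-1)}(z)$ and ${\rm E}^o_{e_{n+i+1},e_1(-1)}(z)$, and the conjugation formula Lemma~\ref{Pre2.3}(v) (applied with the honest isometry $\alpha$, so $a=1$) gives $\alpha \cdot F_i^j(z) \cdot \alpha^{-1} = {\rm E}^o_{\alpha v,\alpha w}(z)$ for the appropriate isotropic vectors $v,w$. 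Thus I would first rewrite $\alpha \cdot F_i^j(X f) \cdot \alpha^{-1}$ as a transvection ${\rm E}^o_{v_1,w_1}(X f)$ where $v_1 = \alpha v$, $w_1 = \alpha w$, and then conjugate once more by $\theta_{n+1}^{\pm 1}$.

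The key point is that $\theta_{n+1}$ is \emph{not} an isometry of $q$ but an element of $\tilde{\rm O}_{2n+1}$ scaling the form (it sends $q$ to $X^{-1}\cdot q$ on the appropriate summand), so Lemma~\ref{Pre2.3}(v) still applies and gives $\theta_{n+1}\cdot {\rm E}^o_{v_1,w_1}(X f)\cdot\theta_{n+1}^{-1} = {\rm E}^o_{\theta_{n+1}v_1,\,\theta_{n+1}w_1}(X\cdot f)$ (with the scalar $a^{-1}b$ absorbing powers of $X$ — one must track this carefully, but it remains a transvection with parameter a polynomial in $X$ times a unit). Now the crucial observation is a divisibility statement: because the transvection parameter carries a factor of $X$, and $\theta_{n+1}^{\pm 1}$ only multiplies coordinates by $X^{\pm 1}$, every entry of the resulting matrix $\theta_{n+1}\cdot(\alpha F_i^j(Xf)\alpha^{-1})\cdot\theta_{n+1}^{-1}$ still lies in $R[X]$ (no negative powers survive). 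So the matrix is a genuine element of ${\rm O}_{2n+1}(R[X])$; the content is to show it is \emph{elementary} orthogonal.

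For the elementarity, I would invoke the transvection-membership criteria proved just above: Theorem~\ref{Aux4.8} and its Corollary~\ref{Aux4.9}, which say that ${\rm E}^o_{v,w}(x) \in {\rm EO}_{2n+1}(R[X])$ whenever $x \in o(v)^3 + o(w)^3$ (together with the isotropy and orthogonality hypotheses $v_0^2 = v_0 w_0 = 0$, $q(v)=\phi(v,w)=0$, which here hold because $v,w$ come from scaling/conjugating the standard isotropic basis vectors $e_{i+1}, e_1(-1)$ and constant orthogonal maps and $\theta$ preserve isotropy and orthogonality up to the scaling already accounted for). The transvection parameter is $X\cdot f$ (up to a unit in $R$), and since $e_{i+1}$ or $e_{n+i+1}$ is a standard basis vector, after applying $\alpha$ and then $\theta_{n+1}^{\pm1}$ the order ideal $o(\theta_{n+1}^{\pm1}\alpha e_{i+1})$ contains $X^{\pm 1}$ times a unit — in particular it contains $X$ in the relevant case, so $X \cdot f \in o(v)^3$ (or one arranges $X\cdot f = X \cdot f$ with $X \in o(w)$ using the $e_1(-1)$ vector, whichever has the $X$-scaled coordinate). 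Hence the criterion applies and the matrix lies in ${\rm EO}_{2n+1}(R[X])$.

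The main obstacle is the bookkeeping of the scaling constant in Lemma~\ref{Pre2.3}(v) when conjugating by $\theta_{n+1}$: one must verify that $\theta_{n+1}$ (restricted to the subspace where it acts nontrivially) really does send $q$ to $a\cdot q$ for a \emph{unit} $a = X^{\pm1} \in R[X,X^{-1}]$, check which of the two vectors $\theta_{n+1}^{\pm1}\alpha v$, $\theta_{n+1}^{\pm1}\alpha w$ acquires the $X$-divisible order ideal, and confirm the resulting parameter $a^{-1}\cdot(X f)$ is still a polynomial (not a Laurent polynomial) in $X$ so that the matrix genuinely has entries in $R[X]$ — this forces one to treat the cases $j=1$ versus $j=2$ and the two signs on $\theta_{n+1}$ separately, since the vector $e_1(-1)$ sits in the first coordinate (scaled by $X$ under $\theta_{n+1}$) while $e_{i+1}$, $e_{n+i+1}$ sit in the middle and last blocks respectively. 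Once the case analysis is set up, each case is a direct application of Theorem~\ref{Aux4.8}/Corollary~\ref{Aux4.9}.
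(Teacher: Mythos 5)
Your first step matches the paper: rewrite $\alpha\cdot F_i^j(Xf)\cdot\alpha^{-1}$ as a transvection ${\rm E}^o_{v,w}(Xf)$ with $v,w$ columns of $\alpha$, and use the conjugation formula of Lemma~\ref{Pre2.3}(v) so that conjugation by $\theta_{n+1}^{\pm1}$ produces ${\rm E}^o_{\theta_{n+1}^{\pm1}v,\,\theta_{n+1}^{\pm1}w}(\cdot)$ with the factor of $X$ in the parameter absorbed by the scaling (the parameter becomes $f$, not ``$Xf$ up to a unit'' as you write at one point). The gap is in the second half. Your membership argument rests on the claim that $o(\theta_{n+1}^{\pm1}\alpha e_{i+1})$ contains $X^{\pm1}$ times a unit, so that the parameter lies in $o(v)^3+o(w)^3$ and Theorem~\ref{Aux4.8}/Corollary~\ref{Aux4.9} apply. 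This claim is false in general. Over an arbitrary $R$ no entry of the unimodular column $\alpha e_{i+1}$ need be a unit at all; and even over a local ring, writing the column as $(v_0,v',v'')^T$, the unit entry may sit in the block $(v_0,v')$ that $\theta_{n+1}$ multiplies by $X$, in which case $o(\theta_{n+1}v)\subseteq XR+o(v'')$ with $o(v'')\subseteq\mathfrak m$, so $o(\theta_{n+1}v)^3$ does not contain $f$ and the criterion does not apply directly. This is precisely the hard case, and it cannot be waved away by ``whichever vector has the $X$-scaled coordinate.''

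The paper closes this gap with two ingredients you omit. First, it reduces to a local ring via the Local--Global Principle (Theorem~\ref{Aux4.14}); since that theorem only yields membership in ${\rm O}_{2n+1}(R)\cdot{\rm EO}_{2n+1}(R[X])$, one must also check that the constant term $\sigma(0)$ of the conjugated transvection lies in ${\rm EO}_{2n+1}(R)$ (done via Theorem~\ref{Aux4.2}), a step absent from your plan. Second, in the local case where the relevant order ideal $o\bigl(\begin{smallmatrix}w_0\\ w''\end{smallmatrix}\bigr)$ is not all of $R$, the paper chooses another column $u$ of $\alpha$ (not the $1^{\rm st}$ or $(n+i+1)^{\rm th}$) with $o(u'')=R$ and uses additivity of $w\mapsto{\rm E}^o_{v,w}$ to split the transvection into ${\rm E}^o_{v,\theta u}(f)\cdot{\rm E}^o_{v,\theta(w-u)}(f)$, each factor then falling under Corollary~\ref{Aux4.9}. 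Without the localization and without this splitting device your argument does not go through; you would need to add both to complete the proof.
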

\begin{proof}
    Let $v=\begin{pmatrix}
        v_0&v'&v''
    \end{pmatrix}^T$ and $w=\begin{pmatrix}
       w_0&w'&w'' 
    \end{pmatrix}^T$ be the $(i+1)^{\rm th}$ and $1^{\rm st}$ column of $\alpha$ respectively. Then we have, 
    \begin{align*}
\theta_{n+1}\cdot(\alpha\cdot F_i^1(f\cdot X)\cdot \alpha^{-1})\cdot \theta_{n+1}^{-1}&=\theta_{n+1}\cdot(\alpha\cdot{\rm E^o}_{e_{i+1},e_1(-1)}(f\cdot X) \cdot \alpha^{-1})\cdot \theta_{n+1}^{-1}\\[5pt]&=\theta_{n+1}\cdot({\rm E^o}_{v,w}(f\cdot X))\cdot \theta_{n+1}^{-1}\\[5pt]&= {\rm E^o}_{\theta_{n+1} \cdot v,\theta_{n+1}\cdot w}(f)\\[5pt]&={\rm E^o}_{\tiny\begin{pmatrix}
       v_0\cdot X\\ v'\cdot X\\v''\end{pmatrix},\begin{pmatrix}
          w_0\cdot X\\  w'\cdot X\\w''
        \end{pmatrix}}  (f)=\sigma(X).
        \end{align*}
       
        Let $v^\prime=\begin{pmatrix}
        v_1^\prime& \dots&v_n^\prime
    \end{pmatrix}^T$, $v''=\begin{pmatrix}
        v_1''& \dots&v_n''
    \end{pmatrix}^T$, $w'=\begin{pmatrix}
        w_1'& \dots&w_n'
    \end{pmatrix}^T$ and $w''=\begin{pmatrix}
        w_1''& \dots&w_n''
    \end{pmatrix}^T$, then we have
    $$\sigma(0)={\rm E^o}_{\tiny\begin{pmatrix}
            0\\O\\v''
        \end{pmatrix},\begin{pmatrix}
            0\\O\\w''
        \end{pmatrix}}(f(0))= \begin{pmatrix}
            1&O&O\\O&I_n&O\\O&f(0)\cdot (v''\cdot (w'')^T-w''\cdot(v'')^T)&I_n
        \end{pmatrix}.$$ By Theorem~\ref{Aux4.2}, it is clear that $\sigma(0)\in {\rm EO}_{2n+1}(R)$. Using Theorem~\ref{Aux4.14}, it is sufficient to consider the case when $R$ is a local ring. If $o(\begin{pmatrix}
            w_0\\w''
        \end{pmatrix})=R$, our assertion follows directly from  Theorem~\ref{Aux4.8}. If $o(w'')\neq R$, then there is a column $u=\begin{pmatrix}
            u_0&u'&u''
        \end{pmatrix}^T$ of the matrix $\alpha$ which is not the $1^{\rm st}$ or $(n+i+1)^{\rm th}$ column such that $o(u'')=R.$ For $a,b\in R$, by Lemma~\ref{Pre2.3}, we have$$\sigma={\rm E^o}_{\tiny\begin{pmatrix}
       a\cdot X\\ v'\cdot X\\v''\end{pmatrix},\begin{pmatrix}
          b\cdot X\\  w'\cdot X\\w''
        \end{pmatrix}}  (f)={\rm E^o}_{\tiny\begin{pmatrix}
       a\cdot X\\ v'\cdot X\\v''\end{pmatrix},\begin{pmatrix}
          c\cdot X\\  u'\cdot X\\u''
        \end{pmatrix}}  (f)\cdot {\rm E^o}_{\tiny\begin{pmatrix}
       a\cdot X\\ v'\cdot X\\v''\end{pmatrix},\begin{pmatrix}
          (b-c)\cdot X\\  (w'-u')\cdot X\\(w''-u'')
        \end{pmatrix}}  (f).$$ By Corollary~\ref{Aux4.9}, both the factors lies in ${\rm EO}_{2n+1}(R).$ Using the following derivation, $$\theta_{n+1}^{-1}\cdot (\alpha\cdot F_i^1(f\cdot X)\cdot\alpha^{-1})\cdot \theta_{n+1}=\sigma_\delta^{-1}\cdot(\theta_{n+1}^{-1}\cdot(\sigma_\delta\cdot\alpha)\cdot F_i^1(f\cdot X)\cdot (\sigma_\delta\cdot \alpha)^{-1})\cdot\theta_{n+1})\cdot\sigma_\delta,$$ we can obtain the second assertion. By performing similar calculations, we can prove the result for the generators $F_i^2(X\cdot f)$.      
\end{proof}
\noindent Next, we observe the following elementary fact in
group theory.
\begin{remark}\label{Ele5.2}
    Let $G$ be a group and $a_i,b_i \in G$ for $i = 1,...,n.$ Then
$$\displaystyle\prod _{i=1}^n a_i~b_i=\displaystyle\prod_{i=1}^n r_i~b_i~r_i^{-1}\cdot \displaystyle\prod_{i=1}^n a_i,~~
{\rm where}~~ r_i = \displaystyle\prod_{j=1}^i a_j.$$
\end{remark}
\begin{lemma}\label{Ele5.3}
   For $n\geq 3$, if $\ \beta\in {\rm{EO}_{2n+1}}(R[X])\bigcap{\rm{O}_{2n+1}}(R[X],X\cdot R[X])$, then the matrices $\theta_{n+1}\cdot\beta\cdot \theta_{n+1}^{-1}$\\[5pt] and $\theta_{n+1}^{-1}\cdot\beta\cdot\theta_{n+1}$ lies in ${\rm{EO}_{2n+1}}(R[X])$.
    \end{lemma}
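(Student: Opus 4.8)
The plan is to reduce the statement to a termwise application of Lemma~\ref{Ele5.1}. Since ${\rm EO}_{2n+1}(R[X])$ is generated by the matrices $F_i^1(z)$ and $F_i^2(z)$ with $z\in R[X]$, I would first write $\beta=\prod_{k=1}^{N}\gamma_k$ with $\gamma_k=F_{i_k}^{j_k}(f_k)$, where $j_k\in\{1,2\}$ and $f_k\in R[X]$. Each one-parameter family $z\mapsto F_i^j(z)$ is additive, $F_i^j(a)F_i^j(b)=F_i^j(a+b)$ (immediate from the explicit matrix formulas, where $F_i^j(z)$ is the exponential of a nilpotent matrix depending linearly on $z$). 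Hence, setting $\delta_k:=F_{i_k}^{j_k}(f_k(0))$, which is the value of $\gamma_k$ at $X=0$, and $g_k:=(f_k-f_k(0))/X\in R[X]$, we obtain the factorization $\gamma_k=\delta_k\cdot F_{i_k}^{j_k}(Xg_k)$, in which the second factor is again an elementary generator whose argument lies in $XR[X]$.

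Next I would use the hypothesis $\beta\in{\rm O}_{2n+1}(R[X],XR[X])$, which says precisely that $\beta|_{X=0}=I_{2n+1}$; evaluating $\beta=\prod_k\gamma_k$ at $X=0$ then gives $\prod_{k=1}^{N}\delta_k=I_{2n+1}$. Now apply the group identity of Remark~\ref{Ele5.2} with $a_k=\delta_k$ and $b_k=F_{i_k}^{j_k}(Xg_k)$:
\[
\beta=\prod_{k=1}^{N}\delta_k\,F_{i_k}^{j_k}(Xg_k)=\Big(\prod_{k=1}^{N}s_k\,F_{i_k}^{j_k}(Xg_k)\,s_k^{-1}\Big)\cdot\prod_{k=1}^{N}\delta_k=\prod_{k=1}^{N}s_k\,F_{i_k}^{j_k}(Xg_k)\,s_k^{-1},
\]
where $s_k:=\prod_{j=1}^{k}\delta_j$ and the last equality uses $\prod_k\delta_k=I_{2n+1}$. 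Each $s_k$ is a product of constant elementary generators, so $s_k\in{\rm EO}_{2n+1}(R)\subseteq{\rm O}_{2n+1}(R)$.

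It now suffices to conjugate termwise. For each $k$, Lemma~\ref{Ele5.1} applied with $\alpha=s_k\in{\rm O}_{2n+1}(R)$ and $f=g_k\in R[X]$ gives $\theta_{n+1}\big(s_k\,F_{i_k}^{j_k}(Xg_k)\,s_k^{-1}\big)\theta_{n+1}^{-1}\in{\rm EO}_{2n+1}(R[X])$, whence
\[
\theta_{n+1}\cdot\beta\cdot\theta_{n+1}^{-1}=\prod_{k=1}^{N}\theta_{n+1}\big(s_k\,F_{i_k}^{j_k}(Xg_k)\,s_k^{-1}\big)\theta_{n+1}^{-1}\in{\rm EO}_{2n+1}(R[X]),
\]
and the claim for $\theta_{n+1}^{-1}\cdot\beta\cdot\theta_{n+1}$ follows identically using the second half of Lemma~\ref{Ele5.1}. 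The hypothesis $n\geq 3$ enters only through that lemma.

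The computation is essentially formal once Lemma~\ref{Ele5.1} is in hand; the points requiring care — and what I'd regard as the heart of the argument — are the additivity $F_i^j(a)F_i^j(b)=F_i^j(a+b)$, which is what legitimizes the splitting $\gamma_k=\delta_k\cdot F_{i_k}^{j_k}(Xg_k)$ while keeping the second factor of the prescribed shape, and arranging the factors in Remark~\ref{Ele5.2} in the correct order so that the telescoping product $\prod_k\delta_k=I_{2n+1}$ absorbs the trailing term and leaves only a product of conjugates of $X$-divisible generators by \emph{constant} orthogonal matrices, which is exactly the input Lemma~\ref{Ele5.1} can handle.
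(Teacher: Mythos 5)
Your proposal is correct and follows essentially the same route as the paper: decompose $\beta$ into elementary generators, use additivity of $z\mapsto F_i^j(z)$ to split off the constant terms, observe that the product of constant terms is the identity because $\beta\equiv I_{2n+1}\pmod{XR[X]}$, apply the commutator identity of Remark~\ref{Ele5.2}, and finish termwise with Lemma~\ref{Ele5.1}. Your write-up is in fact a bit more careful than the paper's (explicitly verifying the additivity and the role of evaluation at $X=0$), but the argument is the same.
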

    \begin{proof}
        We consider $\theta_{n+1}\cdot\beta\cdot \theta_{n+1}^{-1}$. Without loss of generality, let $\beta=\displaystyle\prod_{k=1}^{n}F_{i_k}^1(a_k+X\cdot f_k)\cdot F_{i_k}^2(a_k+X\cdot f_k)$, where $a_k\in R$ and $f_k\in R[X]$. Put $\alpha_p=\displaystyle\prod_{k=1}^{p}F_{i_k}^1(a_k)\cdot F_{i_k}^2(a_k).$ Then, clearly $\alpha_n=I_n$ and Lemma~\ref{Ele5.2} gives $\beta=\displaystyle\prod_{k=1}^{n}\alpha_p\cdot F_{i_k}^1(f_k\cdot X)\cdot F_{i_k}^2(f_k\cdot X)\cdot \alpha_p^{-1}$. Hence, it follows from  Lemma~\ref{Ele5.1}. Similarly, we can prove it for $\theta_{n+1}^{-1}\cdot\beta\cdot\theta_{n+1}$.
    \end{proof}
    \begin{lemma}\label{Ele5.4}
    Suppose $n\geq 3$, and $\beta\in {\rm{EO}_{2n+1}}(R[X])$. Then we have the following. 
    \begin{enumerate}[label=\emph{(\roman*)}]
        \item If $\theta_{n+1}\cdot\beta\cdot \theta_{n+1}^{-1}\in {\rm O}_{2n+1}(R[X])$, then $\theta_{n+1}\cdot\beta\cdot \theta_{n+1}^{-1}\in {\rm EO}_{2n+1}(R[X])$
        \item If $\theta_{n+1}^{-1}\cdot\beta\cdot \theta_{n+1}\in {\rm O}_{2n+1}(R[X])$, then $\theta_{n+1}^{-1}\cdot\beta\cdot \theta_{n+1}\in {\rm EO}_{2n+1}(R[X])$.
    \end{enumerate}
    \end{lemma}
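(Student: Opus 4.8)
The plan is to reduce Lemma~\ref{Ele5.4} to the preceding Lemma~\ref{Ele5.3}, which handles the case of matrices that are trivial modulo $X$. The key observation is that the obstruction to applying Lemma~\ref{Ele5.3} directly is the constant term: a general $\beta\in {\rm EO}_{2n+1}(R[X])$ need not lie in ${\rm O}_{2n+1}(R[X],X\cdot R[X])$, but its specialization $\beta(0)$ is an element of ${\rm EO}_{2n+1}(R)$ on which conjugation by $\theta_{n+1}$ acts in a controlled way. So first I would write $\beta=\gamma\cdot\beta(0)$ where $\gamma:=\beta\cdot\beta(0)^{-1}$. Then $\gamma\in {\rm EO}_{2n+1}(R[X])$ (as a product of two elementary matrices) and moreover $\gamma\equiv I_{2n+1}\pmod{X\cdot R[X]}$, so $\gamma\in {\rm EO}_{2n+1}(R[X])\cap {\rm O}_{2n+1}(R[X],X\cdot R[X])$ and Lemma~\ref{Ele5.3} applies to give $\theta_{n+1}\cdot\gamma\cdot\theta_{n+1}^{-1}\in {\rm EO}_{2n+1}(R[X])$.

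It remains to deal with the constant part: we have $\theta_{n+1}\cdot\beta\cdot\theta_{n+1}^{-1}=(\theta_{n+1}\cdot\gamma\cdot\theta_{n+1}^{-1})\cdot(\theta_{n+1}\cdot\beta(0)\cdot\theta_{n+1}^{-1})$, and since the first factor is in ${\rm EO}_{2n+1}(R[X])$, it suffices to show the second factor, $\theta_{n+1}\cdot\beta(0)\cdot\theta_{n+1}^{-1}$, is in ${\rm EO}_{2n+1}(R[X])$. Now invoke the hypothesis: $\theta_{n+1}\cdot\beta\cdot\theta_{n+1}^{-1}\in {\rm O}_{2n+1}(R[X])$, so its specialization at $X=0$ lies in ${\rm O}_{2n+1}(R)$; but specializing the product above at $X=0$ kills $\theta_{n+1}$ down to a matrix that fixes the last $n+1$ coordinates, and one checks that $\theta_{n+1}\cdot\beta(0)\cdot\theta_{n+1}^{-1}$ has entries in $R[X]$ precisely when the ``upper-right $X^{-1}$ block'' of the conjugation vanishes. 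The cleanest route is: $\beta(0)\in {\rm EO}_{2n+1}(R)$, write it as a product of generators $F_{i}^j(z)$ with $z\in R$, and conjugate each generator by $\theta_{n+1}$ using the transvection identity already exploited in Lemma~\ref{Ele5.1}, namely $\theta_{n+1}\cdot {\rm E}^o_{v,w}(z)\cdot\theta_{n+1}^{-1}={\rm E}^o_{\theta_{n+1}v,\,\theta_{n+1}w}(z\cdot X^{-1})$ — but here the constraint that the result lie in ${\rm O}_{2n+1}(R[X])$ (no negative powers of $X$) forces a cancellation which, combined with Theorem~\ref{Aux4.8} and Corollary~\ref{Aux4.9}, places each conjugated generator (or suitable products thereof) in ${\rm EO}_{2n+1}(R[X])$.

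For part (ii), the argument is identical with $\theta_{n+1}$ replaced by $\theta_{n+1}^{-1}$ throughout, using the conjugation $\sigma_\delta$ trick from the end of the proof of Lemma~\ref{Ele5.1} to swap the two halves, or simply re-running the same decomposition $\beta=\gamma\cdot\beta(0)$ and Lemma~\ref{Ele5.3}.

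I expect the main obstacle to be the constant-part step: showing $\theta_{n+1}\cdot\beta(0)\cdot\theta_{n+1}^{-1}\in {\rm EO}_{2n+1}(R[X])$ given only that it lies in ${\rm O}_{2n+1}(R[X])$. Conjugating by $\theta_{n+1}$ a priori introduces $X^{-1}$ in the rows indexed $n+2,\dots,2n+1$; the orthogonality-over-$R[X]$ hypothesis is exactly what guarantees these terms cancel, but turning that cancellation into membership in the elementary subgroup requires either a careful generator-by-generator analysis as in Lemma~\ref{Ele5.1} (where the hypothesis $f\in R[X]$ played the role now played by the orthogonality constraint) or an application of Theorem~\ref{Aux4.14} to localize at maximal ideals and then Theorem~\ref{Aux4.8}/Corollary~\ref{Aux4.9}. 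I would try the localization route first, as it avoids the most painful bookkeeping.
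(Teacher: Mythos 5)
Your reduction to the constant term is exactly the paper's first move: write $\beta$ as (constant part)$\cdot$($X$-divisible part), apply Lemma~\ref{Ele5.3} to the factor congruent to $I_{2n+1}$ modulo $X$, and observe that the hypothesis then forces $\theta_{n+1}\cdot\beta(0)\cdot\theta_{n+1}^{-1}\in {\rm O}_{2n+1}(R[X])$. That part is correct. The gap is in your treatment of the remaining factor $\theta_{n+1}\cdot\beta(0)\cdot\theta_{n+1}^{-1}$, which is where the actual content of the lemma sits. The generator-by-generator route cannot work: if you factor $\beta(0)=\prod F_{i}^{j}(z_k)$ with $z_k\in R$, each conjugate $\theta_{n+1}\cdot F_i^j(z_k)\cdot\theta_{n+1}^{-1}={\rm E}^o_{\theta_{n+1}e,\theta_{n+1}e'}(z_k\cdot X^{-1})$ genuinely contains $X^{-1}$ (there is no factor of $X$ in the argument to absorb it, unlike in Lemma~\ref{Ele5.1} where the argument was $X\cdot f$), and the polynomiality constraint holds only for the product, not for the individual factors. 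Your fallback — localize via Theorem~\ref{Aux4.14} and then apply Theorem~\ref{Aux4.8}/Corollary~\ref{Aux4.9} — does not close the gap either: those results concern a single transvection ${\rm E}^o_{v,w}(x)$ under order-ideal hypotheses on $x$, and they say nothing about an arbitrary element of ${\rm EO}_{2n+1}(R)$ conjugated by $\theta_{n+1}$; moreover Theorem~\ref{Aux4.14} would only return membership in ${\rm O}_{2n+1}(R)\cdot{\rm EO}_{2n+1}(R[X])$, not in ${\rm EO}_{2n+1}(R[X])$, so an extra argument would still be needed.

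What actually works (and is what the paper does) is a direct block computation on $\beta_0=\beta(0)$. Writing $\beta_0$ in $(1,n,n)$-block form, the requirement that $\theta_{n+1}\cdot\beta_0\cdot\theta_{n+1}^{-1}$ have entries in $R[X]$ forces the two lower-left blocks to vanish; orthogonality then gives $\alpha_{12}=0$ and $\alpha_{33}=(\alpha_{22}^T)^{-1}$, and one computes
$\theta_{n+1}\cdot\beta_0\cdot\theta_{n+1}^{-1}=\beta_0\cdot\bigl(I_{2n+1}+(X-1)\,(\text{strictly upper block})\bigr)$,
where orthogonality of that unipotent factor forces $\alpha_{13}=0$ and $\alpha_{23}^T\cdot\alpha_{33}\in{\rm Alt}_n(R)$, so the factor lies in ${\rm EO}_{2n+1}(R[X])$ by Theorem~\ref{Aux4.2}, and $\beta_0\in{\rm EO}_{2n+1}(R)$ already. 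You would need to supply this (or an equivalent) argument; as written, your proof of the crucial step is not viable.
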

    \begin{proof}
        It suffices to show the result for $\theta_{n+1}\cdot\beta\cdot \theta_{n+1}^{-1}$. Let $\beta=\beta_0\cdot\beta_1$, where $\beta_1\in {\rm EO}_{2n+1}(R)$ is the free\\[5pt] term of $\beta$. By Lemma~\ref{Ele5.3}, $\theta_{n+1}\cdot\beta_1\cdot \theta_{n+1}^{-1}\in {\rm EO}_{2n+1}(R[X])$. Since $\theta_{n+1}\cdot\beta\cdot \theta_{n+1}^{-1}\in {\rm O}_{2n+1}(R[X])$, $\beta_0$ has the form $$\beta_0=\begin{pmatrix}
            \alpha_{11}&\alpha_{12}&\alpha_{13}\\
            \alpha_{21}&\alpha_{22}&\alpha_{23}\\
            O&O&\alpha_{33}\end{pmatrix},$$
             where $\alpha_{11}\in {\rm M}_1(R)$, $\alpha_{12}$ and $\alpha_{13} \in {\rm M}_{1,n}(R)$, $\alpha_{21}\in {\rm M}_{n,1}(R)$, $\alpha_{22},\alpha_{23}$ and $\alpha_{33}\in {\rm M}_{n}(R).$ By the orthogonality, we have the following observations: $\alpha_{11}^2=1, \alpha_{12}=0$ and $(\alpha_{22}^T)^{-1}=\alpha_{33}.$
                
          \vspace{2mm}
          
            Hence we can write $\beta_0$ as follows, $$\beta_0=\begin{pmatrix}
            \alpha_{11}&O&\alpha_{13}\\
            \alpha_{21}&\alpha_{22}&\alpha_{23}\\
            O&O&(\alpha_{22}^T)^{-1}\end{pmatrix}.$$
            
            We can compute  $\theta_{n+1}\cdot\beta_0\cdot \theta_{n+1}^{-1}$ as follows,
            \begin{align*}
           \theta_{n+1}\cdot\beta_0\cdot \theta_{n+1}^{-1}&=\begin{pmatrix}
            \alpha_{11}&O&X\cdot\alpha_{13}\\
            \alpha_{21}&\alpha_{22}&X\cdot\alpha_{23}\\
            O&O&(\alpha_{22}^T)^{-1}\end{pmatrix}
            =\beta_0\cdot\begin{pmatrix}
                1&O&(X-1)\alpha_{11}\cdot\alpha_{13}\\
                O&I_n&(1-X)\alpha_{23}^T\cdot\alpha_{33}\\
                O&O&I_n
            \end{pmatrix}.
            \end{align*}
            The matrix $\begin{pmatrix}
                1&O&(X-1)\alpha_{11}\cdot\alpha_{13}\\
                O&I_n&(1-X)\alpha_{23}^T\cdot\alpha_{33}\\
                O&O&I_n
            \end{pmatrix}$ lies in ${\rm O}_{2n+1}(R[X])$, which implies that $\alpha_{13}=0$. Therefore, we have the relation $\alpha_{23}^T\cdot\alpha_{33}+\alpha_{33}^T\cdot\alpha_{23}=0$. That is, $\alpha_{23}^T\cdot\alpha_{33}\in {\rm Alt}_n(R)$. Hence, we get   $$\theta_{n+1}\cdot\beta_0\cdot \theta_{n+1}^{-1}=\beta_0\cdot\begin{pmatrix}
                1&O&O\\
                O&I_n&(1-X)\alpha_{23}^T\cdot\alpha_{33}\\
                O&O&I_n
            \end{pmatrix}\in {\rm EO}_{2n+1}(R[X]).$$
    \end{proof}
  \emph  {For the rest of this section, assume that $R$ is a local ring with
maximal ideal $\mathfrak{m}$. Let $k$ be the residue field and $\phi$ be the canonical projection,
$R\rightarrow k$}. 
    \begin{theorem}\label{Ele5.5}
        For $n\geq 3$, the following set inclusion holds, ${\rm O}_{2n+1}(\mathfrak m)\subset {\rm DO}_{2n+1}(\mathfrak m)\cdot {\rm EO}_{2n+1}(R,\mathfrak m).$
    \end{theorem}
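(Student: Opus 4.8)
The plan is to imitate the even-orthogonal reduction of Suslin--Kopeiko, the one genuinely new ingredient being the anisotropic coordinate. Given $\alpha\in{\rm O}_{2n+1}(\mathfrak m)$, look at its first column $v=\begin{pmatrix}v_0&v'&v''\end{pmatrix}^{T}$. Since $\alpha\equiv I_{2n+1}\pmod{\mathfrak m}$ we have $v_0\in 1+\mathfrak m$ and $v',v''\in{\rm M}_{n,1}(\mathfrak m)$, and orthogonality with respect to $\phi=2\perp\widetilde\psi_n$ forces $q(v)=1$, i.e.
$v_0^{2}+(v')^{T}v''=1$, equivalently $(v')^{T}v''=(1-v_0)(1+v_0)$ with $1+v_0$ a unit. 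The heart of the proof is to produce $\epsilon\in{\rm EO}_{2n+1}(R,\mathfrak m)$ with $\epsilon v=e_1$; note this is automatic once $\epsilon v\in Re_1$, for then its nonzero coordinate squares to $q(\epsilon v)=1$ and is $\equiv 1\pmod{\mathfrak m}$, hence equals $1$ as $2\in R^{\times}$.

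The main obstacle is exactly this column-cleaning, and it is precisely where the odd case departs from the even one. In ${\rm O}_{2n}(\mathfrak m)$ the analogous column is isotropic and has a \emph{unit} among its hyperbolic coordinates, which serves as a free pivot; here every hyperbolic coordinate of $v$ lies in $\mathfrak m$, the only unit being $v_0$, and the transvections $F^{1}_i,F^{2}_i$ that act through coordinate $1$ carry a quadratic term. Over an arbitrary (non-Henselian) local ring one cannot solve the resulting quadratic, so one must instead use the relation $q(v)=1$ itself to absorb these quadratic terms. Concretely I would: (a) use the embedded copy of ${\rm E}_n(R)$ afforded by $\gamma\mapsto 1\perp\gamma\perp(\gamma^{T})^{-1}$ (Theorem~\ref{Aux4.1}(ii)), the alternating-block matrices of Theorem~\ref{Aux4.2}, and the generators $F^{3},F^{4},F^{5}$ — all of which fix coordinate $1$ — together with the relation above, to reduce to the situation in which $v$ is supported on a single hyperbolic pair together with $e_1$, i.e.\ to the ${\rm O}_3$ of the form $x_0^{2}+x_1x_2$; and (b) in that ${\rm O}_3$, use the exceptional isogeny ${\rm SL}_2\to{\rm SO}_3$, under which ${\rm E}_2(R,\mathfrak m)$ maps into ${\rm EO}_3(R,\mathfrak m)$ and whose elementary generators are exactly the $\operatorname{Sym}^{2}$-shaped matrices $F^{1}_1,F^{2}_1$: the norm-$1$ vector $(v_0,v'_j,v''_j)$ corresponds to a binary quadratic form of discriminant $1$, which factors into two linear forms whose coefficient matrix has determinant $1$ automatically — forced by $q(v)=1$ — so the required element is obtained with no quadratic equation solved in $R$. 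These manipulations, unwieldy by hand, are where GAP and the auxiliary subgroups ${\rm TO}_{2n+1}(R),{\rm TO}'_{2n+1}(R)$ enter.

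Once $\epsilon v=e_1$, orthogonality with respect to $\phi$ forces the first row of $\epsilon\alpha$ to be $e_1^{T}$ as well: from $\phi_{1j}=(\epsilon\alpha\,e_1)^{T}\phi(\epsilon\alpha\,e_j)=2(\epsilon\alpha)_{1j}$ (using $\epsilon\alpha\,e_1=e_1$) one gets $(\epsilon\alpha)_{11}=1$ and $(\epsilon\alpha)_{1j}=0$ for $j>1$. Hence $\epsilon\alpha=1\perp\alpha'$ with $\alpha'\in{\rm O}_{2n}(\mathfrak m)$ (orthogonal for $\widetilde\psi_n$). Now invoke the even analogue of the theorem, ${\rm O}_{2n}(\mathfrak m)\subseteq{\rm DO}_{2n}(\mathfrak m)\cdot{\rm EO}_{2n}(R,\mathfrak m)$ for $n\geq 3$ (Suslin--Kopeiko \cite{SusKop1977}), write $\alpha'=d'\eta'$, and push forward along $\beta\mapsto 1\perp\beta$, which carries ${\rm DO}_{2n}(\mathfrak m)$ into ${\rm DO}_{2n+1}(\mathfrak m)$ and ${\rm EO}_{2n}(R,\mathfrak m)$ into ${\rm EO}_{2n+1}(R,\mathfrak m)$. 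Thus $\alpha=\epsilon^{-1}(1\perp d')(1\perp\eta')$; since ${\rm DO}_{2n+1}(\mathfrak m)$ normalizes ${\rm EO}_{2n+1}(R,\mathfrak m)$ (it rescales each generator, keeping the parameter in $\mathfrak m$, by Lemma~\ref{Pre2.3}; compare Theorem~\ref{Aux4.10}), rewrite $\epsilon^{-1}(1\perp d')=(1\perp d')\big((1\perp d')^{-1}\epsilon^{-1}(1\perp d')\big)$ to land in ${\rm DO}_{2n+1}(\mathfrak m)\cdot{\rm EO}_{2n+1}(R,\mathfrak m)$, as desired.

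In short, the last two steps are formal; essentially all the work is the column-cleaning of the first step, and within it the crux is the absence of a unit pivot in the hyperbolic block, which forces the use of the quadratic transvections and an essential appeal to the relation $q(v)=1$ — exactly the difficulty the introduction flags as requiring the GAP-assisted analysis.
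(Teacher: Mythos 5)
Your overall architecture (clean the first column to $e_1$ by an element of ${\rm EO}_{2n+1}(R,\mathfrak m)$, drop to ${\rm O}_{2n}(\mathfrak m)$, quote Suslin--Kopeiko, and push forward along $\beta\mapsto 1\perp\beta$, using that ${\rm DO}_{2n+1}(\mathfrak m)$ normalizes ${\rm EO}_{2n+1}(R,\mathfrak m)$) is sound in its last two stages, and you correctly locate the crux in the column-cleaning. But that crux is not actually carried out, and step (a) of your reduction fails with the tools you name. You propose to use the embedded ${\rm E}_n$, the alternating-block matrices, and $F^3,F^4,F^5$ to reduce $v=(v_0,v',v'')$ to a vector supported on $e_1$ and a single hyperbolic pair. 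To stay inside the \emph{relative} group ${\rm EO}_{2n+1}(R,\mathfrak m)$ you may only use $\gamma\in{\rm E}_n(R,\mathfrak m)$ (Theorem~\ref{Aux4.1}(ii)) and $\delta\in{\rm Alt}_n(\mathfrak m)$; these act on $v'$ by $v'\mapsto\gamma v'$ and $v'\mapsto v'+\delta v''$. Since every entry of $v'$ and $v''$ already lies in $\mathfrak m$, such operations only add $\mathfrak m$-multiples of the coordinates to one another and cannot in general concentrate $v'$ into a single slot (take $R=k[[s,t]]_{(s,t)}$ and $v'=(s,t,0,\dots)$: no combination $t+ms$ with $m\in\mathfrak m$ vanishes). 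Using $\gamma\in{\rm E}_n(R)$ instead would land you outside the relative group. So the reduction to the ${\rm O}_3$ situation, where your ${\rm SL}_2\to{\rm SO}_3$ factorization argument (which is essentially correct, via $q=(1-v_0)/a=-b/(1+v_0)$) would apply, is not established; and without it you are back to the quadratic $v_i'-2zv_0-z^2v_i''=0$ whose discriminant $4(1-\sum_{j\neq i}v_j'v_j'')$ is a unit congruent to $4$ but not a square over a non-Henselian local ring --- exactly the obstruction you flag. Declaring that ``GAP and the auxiliary subgroups enter'' here is not a proof of the one step on which everything rests.

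For comparison, the paper avoids column-cleaning entirely: since $R$ is local, Theorem~\ref{Aux4.12} (Rao's splitting for quadratic spaces of sufficient Witt index) gives ${\rm O}_{2n+1}(R)={\rm O}_2(R)\cdot{\rm EO}_{2n+1}(R)$, and one intersects this decomposition with ${\rm GL}_{2n+1}(\mathfrak m)$: reducing mod $\mathfrak m$ the two factors become mutually inverse, so each can be adjusted into the corresponding congruence subgroup, yielding ${\rm O}_{2n+1}(\mathfrak m)={\rm O}_2(\mathfrak m)\cdot{\rm EO}_{2n+1}(\mathfrak m)\subset{\rm DO}_{2n+1}(\mathfrak m)\cdot{\rm EO}_{2n+1}(R,\mathfrak m)$. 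If you want to salvage your route, the missing transitivity statement should be derived from that global splitting (or from the transvection results, Theorem~\ref{Aux4.8} and Corollary~\ref{Aux4.9}, which let you use Eichler transvections ${\rm E}^o_{u,w}(z)$ with $z\in\mathfrak m$ and $u,w$ not confined to coordinate vectors), rather than from coordinate-by-coordinate elimination inside $\mathfrak m$.
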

    \begin{proof}
        We have the decomposition ${\rm O}_{2n+1}(R)={\rm O}_{2}(R)\cdot {\rm EO}_{2n+1}(R)$. Then we get, 
    ${\rm O}_{2n+1}(\mathfrak{m}) = {\rm O}_{2n+1}(R) \cap {\rm GL}_{2n+1}(\mathfrak{m}) = ({\rm O}_2(R) \cdot {\rm EO}_{2n+1}(R)) \cap {\rm GL}_{2n+1}(\mathfrak{m}) = {\rm O}_2(\mathfrak{m}) \cdot {\rm EO}_{2n+1}(\mathfrak{m})\subset {\rm DO}_{2n+1}(\mathfrak{m}) \cdot {\rm EO}_{2n+1}(R,\mathfrak{m})$.
    \end{proof}

 \begin{lemma}\label{Ele5.6}
 
     If $\alpha\in {\rm DO}_{2n+1}(R)$, then $\alpha \cdot {\rm TO}_{2n+1}(R)\cdot \alpha^{-1}={\rm TO}_{2n+1}(R).$
 \end{lemma}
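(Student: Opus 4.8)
The plan is to reduce the statement to the generators of ${\rm TO}_{2n+1}(R)$ and then show that conjugation by a diagonal orthogonal matrix sends each generator back into ${\rm TO}_{2n+1}(R)$. Since ${\rm TO}_{2n+1}(R)$ is generated by the matrices $H_i = F_i^1(z)$ for $1 \leq i \leq n$ and $H_j = F_{j-n}^2(\tfrac12)$ for $n+1 \leq j \leq 2n$ (and more flexibly, by the description in the Remark, by all matrices $\begin{pmatrix} 1 & O & O \\ O & \gamma & \delta \\ O & O & (\gamma^T)^{-1}\end{pmatrix}$ with $\gamma$ upper triangular unipotent and $\gamma^{-1}\delta \in {\rm Alt}_n(R)$), it suffices to check the conjugation action on a generating set. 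Conjugation is an automorphism of ${\rm O}_{2n+1}(R)$, so $\alpha \cdot {\rm TO}_{2n+1}(R) \cdot \alpha^{-1}$ is a subgroup; establishing one inclusion $\alpha \cdot {\rm TO}_{2n+1}(R)\cdot \alpha^{-1} \subseteq {\rm TO}_{2n+1}(R)$ for every $\alpha \in {\rm DO}_{2n+1}(R)$ automatically gives the reverse inclusion by applying the same statement to $\alpha^{-1} \in {\rm DO}_{2n+1}(R)$.

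First I would write $\alpha = \mathrm{diag}(d_0, d_1, \dots, d_n, d_1^{-1}, \dots, d_n^{-1})$ with $d_0^2 = 1$, as permitted by the description of ${\rm DO}_{2n+1}(R)$ in the Preliminaries. Then I would compute $\alpha \cdot F_i^1(z) \cdot \alpha^{-1}$ and $\alpha \cdot F_i^2(z) \cdot \alpha^{-1}$ directly from the explicit formulas $F^1_i(z)= I_{2n+1}+e_{1,n+i+1}(z)-e_{i+1,1}(2z)-e_{i+1,n+i+1}(z^2)$ and $F^2_i(z)= I_{2n+1}+e_{1,i+1}(z)-e_{n+i+1,1}(2z)-e_{n+i+1,i+1}(z^2)$. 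Since conjugating the elementary matrix $e_{ab}(z)$ by $\mathrm{diag}(\lambda_1, \dots)$ yields $e_{ab}(\lambda_a \lambda_b^{-1} z)$, one finds $\alpha \cdot F_i^1(z) \cdot \alpha^{-1}$ is again of the form $I_{2n+1} + e_{1,n+i+1}(z') - e_{i+1,1}(2z'') - e_{i+1,n+i+1}((z')(z''))$ with matching exponents forced by the orthogonality relations $d_{\delta(i)} = d_i^{-1}$; in fact it equals $F_i^1(d_0^{-1} d_i^{-1} z)$ after checking the scalars are compatible (using $d_0^{-1} = d_0$). An entirely analogous computation handles $F_i^2(z)$. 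The upshot is that $\alpha$ conjugates each generator of ${\rm TO}_{2n+1}(R)$ to a scalar reparametrization of a generator of the same type, hence back into ${\rm TO}_{2n+1}(R)$.

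Alternatively, and perhaps more cleanly, I would use the block description: any $\tau \in {\rm TO}_{2n+1}(R)$ has the form $\begin{pmatrix} 1 & O & O \\ O & \gamma & \delta \\ O & O & (\gamma^T)^{-1}\end{pmatrix}$ with $\gamma$ upper triangular unipotent and $\gamma^{-1}\delta \in {\rm Alt}_n(R)$. Writing $\alpha = [d_0] \perp [D] \perp [(D^T)^{-1}]$ with $D = \mathrm{diag}(d_1, \dots, d_n)$, one computes $\alpha \tau \alpha^{-1} = \begin{pmatrix} 1 & O & O \\ O & D\gamma D^{-1} & D\delta D^{T} \\ O & O & ((D\gamma D^{-1})^T)^{-1}\end{pmatrix}$. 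Now $D\gamma D^{-1}$ is still upper triangular with $1$'s on the diagonal (conjugating by a diagonal matrix preserves the set of upper triangular unipotent matrices), and $(D\gamma D^{-1})^{-1}(D\delta D^T) = D(\gamma^{-1}\delta)D^T$; since $\gamma^{-1}\delta$ is alternating and $D(\cdot)D^T$ preserves ${\rm Alt}_n(R)$ (it sends a skew-symmetric matrix to a skew-symmetric matrix, with zero diagonal because the original has zero diagonal), the conjugate lies in ${\rm TO}_{2n+1}(R)$.

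The main obstacle is purely bookkeeping: one must verify that the diagonal conjugation indeed preserves the constraint $\gamma^{-1}\delta \in {\rm Alt}_n(R)$ (equivalently, in the generator approach, that the coefficients $z$ and $z^2$ in $F_i^1$ and $F_i^2$ scale consistently so that the image is again a legitimate generator rather than some element outside ${\rm TO}_{2n+1}(R)$), and that the orthogonality relations $d_{\delta(i)} = d_i^{-1}$, $d_0 = d_0^{-1}$ are exactly what makes everything land back in the orthogonal group and, more specifically, in ${\rm TO}_{2n+1}(R)$. Once this compatibility of exponents is confirmed — which is where I would lean on GAP-style verification for small $n$ as a sanity check before writing the general argument — the proof is essentially immediate, with the final symmetry remark (apply the inclusion to $\alpha^{-1}$) closing the equality.
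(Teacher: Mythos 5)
Your first argument --- conjugating the generators $F_i^1(z)$ and $F_i^2(\tfrac12)$ by $\alpha=\mathrm{diag}(d_0,d_1,\dots,d_n,d_1^{-1},\dots,d_n^{-1})$ via the rule $D\,e_{ab}(z)\,D^{-1}=e_{ab}(\lambda_a\lambda_b^{-1}z)$ and checking that the linear and quadratic coefficients scale compatibly --- is exactly the paper's proof, which records $\alpha\cdot F_i^1(z)\cdot\alpha^{-1}=F_i^1(z\cdot d_i)$ and $\alpha\cdot F_i^2(\tfrac12)\cdot\alpha^{-1}=F_i^2(\tfrac12\cdot d_i^{-1})$ and concludes both inclusions. (Your stated scalar $d_0^{-1}d_i^{-1}z$ has the exponent of $d_i$ reversed --- position $n+i+1$ carries $d_i^{-1}$, so the factor is $d_0d_i$ --- but this does not affect the conclusion. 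Note also that, like the paper, you should say a word about why $F_i^2(w)$ for a general unit multiple $w$ of $\tfrac12$ lies in ${\rm TO}_{2n+1}(R)$, since only the specific value $\tfrac12$ appears in the generating set; this follows from the commutator identities used in Lemma~\ref{Aux4.16}.)

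Your ``cleaner'' alternative, however, rests on a false premise: it is not true that every $\tau\in{\rm TO}_{2n+1}(R)$ has the block form $\left(\begin{smallmatrix}1&O&O\\O&\gamma&\delta\\O&O&(\gamma^T)^{-1}\end{smallmatrix}\right)$. The Remark in Section~3 asserts only that matrices of that form lie in ${\rm TO}_{2n+1}(R)$, not the converse; the generators $F_i^1(z)$ and $F_i^2(\tfrac12)$ themselves have nonzero entries in the first row and first column, so they are not of this shape. The block computation therefore only handles a proper subset of ${\rm TO}_{2n+1}(R)$ and cannot replace the generator-by-generator argument. Stick with your first route.
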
 
\begin{proof}
Let $\alpha\in {\rm DO}_{2n+1}(R)$, then $\alpha ={diag}(d_0,d_1,\dots ,d_n,d_1^{-1},\dots,d_{n}^{-1}).$  Clearly, for  $\alpha\in {\rm DO}_{2n+1}(R)$, ${\rm TO}_{2n+1}(R)\subset \alpha \cdot {\rm TO}_{2n+1}(R)\cdot \alpha^{-1}$.  
    By computations, we get the following relations for the generators of ${\rm TO}_{2n+1}(R)$; 
    $\alpha\cdot F_i^1(z)\cdot\alpha^{-1}=F_i^1(z\cdot d_i)$ and  $\alpha\cdot F_i^2(\frac{1}{2})\cdot\alpha^{-1}=F_i^2(\frac{1}{2}\cdot d_i^{-1})$, for $1\leq i\leq n$.
     Thus, we obtain $\alpha \cdot {\rm TO}_{2n+1}(R)\cdot \alpha^{-1}\subset {\rm TO}_{2n+1}(R)$.  
   
\end{proof}

    \begin{theorem}\label{Ele5.7}
        The orthogonal group ${\rm O}_{2n+1}(R)$ has the following decomposition, ${\rm O}_{2n+1}(R)={\rm TO}_{2n+1}(R)\cdot{\rm MO}_{2n+1}(R)\cdot{\rm TO}_{2n+1}(R)\cdot{\rm EO}_{2n+1}(R,\mathfrak m).$
    \end{theorem}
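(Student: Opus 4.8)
The plan is to lift the field-level factorization provided by Theorem~\ref{Aux4.17} to $R$ and then absorb the residue-zero error term using Theorem~\ref{Ele5.5}. Concretely, take $\alpha\in{\rm O}_{2n+1}(R)$ and reduce modulo $\mathfrak m$, so that $\phi(\alpha)\in{\rm O}_{2n+1}(k)$. By Theorem~\ref{Aux4.17} I would write $\phi(\alpha)=\bar\tau_1\cdot\bar\mu\cdot\bar\tau_2$ with $\bar\tau_1,\bar\tau_2\in{\rm TO}_{2n+1}(k)$ and $\bar\mu\in{\rm MO}_{2n+1}(k)$. Then I lift each factor: Theorem~\ref{Aux4.15}(i) supplies $\tau_1,\tau_2\in{\rm TO}_{2n+1}(R)$ with $\phi(\tau_i)=\bar\tau_i$, and, since $R$ is local, any representative of a nonzero residue class is a unit, so the induced map ${\rm GL}_1(R)\to{\rm GL}_1(k)$ is surjective; hence Theorem~\ref{Aux4.15}(ii) yields $\mu\in{\rm MO}_{2n+1}(R)$ with $\phi(\mu)=\bar\mu$.

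Next I form the correction term $\gamma:=\tau_2^{-1}\mu^{-1}\tau_1^{-1}\alpha\in{\rm O}_{2n+1}(R)$. By construction $\phi(\gamma)=I_{2n+1}$, so $\gamma$ lies in $\ker\phi={\rm O}_{2n+1}(\mathfrak m)$. Applying Theorem~\ref{Ele5.5} I obtain $\gamma=d\cdot\epsilon$ for some $d\in{\rm DO}_{2n+1}(\mathfrak m)$ and $\epsilon\in{\rm EO}_{2n+1}(R,\mathfrak m)$, whence $\alpha=\tau_1\cdot\mu\cdot\tau_2\cdot d\cdot\epsilon$.

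It then remains to re-order the middle factors into the prescribed pattern. Since $d\in{\rm DO}_{2n+1}(R)$, Lemma~\ref{Ele5.6} shows $\tau_2':=d^{-1}\tau_2 d\in{\rm TO}_{2n+1}(R)$, so $\tau_2 d=d\tau_2'$ and therefore $\alpha=\tau_1\cdot(\mu d)\cdot\tau_2'\cdot\epsilon$. Because $d\in{\rm DO}_{2n+1}(R)\subseteq{\rm MO}_{2n+1}(R)$ and the monomial orthogonal matrices form a subgroup of ${\rm O}_{2n+1}(R)$, we get $\mu d\in{\rm MO}_{2n+1}(R)$, which gives the desired membership $\alpha\in{\rm TO}_{2n+1}(R)\cdot{\rm MO}_{2n+1}(R)\cdot{\rm TO}_{2n+1}(R)\cdot{\rm EO}_{2n+1}(R,\mathfrak m)$.

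The main obstacle here is not any single computation but the bookkeeping in the last step: Theorem~\ref{Ele5.5} delivers the diagonal correction $d$ positioned between the second ${\rm TO}$-factor and the relative elementary factor, so one must commute it leftward past $\tau_2$ — which is exactly what the conjugation identity of Lemma~\ref{Ele5.6} permits — and then merge it into the monomial factor, all while leaving $\epsilon$ undisturbed at the far right. Checking the surjectivity hypothesis needed to invoke Theorem~\ref{Aux4.15}(ii), which is the one place where locality of $R$ is genuinely used, is the only other non-formal point.
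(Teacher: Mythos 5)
Your proposal is correct and follows essentially the same route as the paper: reduce modulo $\mathfrak m$, factor over $k$ via Theorem~\ref{Aux4.17}, lift the three factors via Theorem~\ref{Aux4.15}, split the residual element of ${\rm O}_{2n+1}(\mathfrak m)$ via Theorem~\ref{Ele5.5}, and commute the diagonal correction past the second ${\rm TO}$-factor using Lemma~\ref{Ele5.6} before absorbing it into the monomial factor. Your explicit check that locality of $R$ gives the surjectivity of ${\rm GL}_1(R)\to{\rm GL}_1(k)$ needed for Theorem~\ref{Aux4.15}(ii) is a detail the paper leaves implicit.
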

    \begin{proof}
        Let $\alpha \in {\rm O}_{2n+1}(R)$, then by Theorems ~\ref{Aux4.15} and \ref{Aux4.17}, there exist $\beta_1, \beta_3\in {\rm TO}_{2n+1}(k)$, $\beta_2\in {\rm MO}_{2n+1}(k)$, and $\alpha_1,\alpha_3\in {\rm TO}_{2n+1}(R)$, $\alpha_2\in {\rm MO}_{2n+1}(R)$ such that $\phi(\alpha)=\beta_1\cdot\beta_2\cdot\beta_3$ and $\phi(\alpha_i)=\beta_i$. Thus, $\alpha=\alpha_1\cdot\alpha_2\cdot\alpha_3\cdot\alpha_4$, where $\alpha_4\in {\rm O}_{2n+1}(R) $. Writing $\alpha_4$ in the form $\alpha_5\cdot\alpha_6$, where $\alpha_5\in {\rm DO}_{2n+1}(R)$ and $\alpha_6 \in {\rm EO}_{2n+1}(R,I)$, we obtain $ \alpha =\alpha_1\cdot(\alpha_2\cdot\alpha_5)\cdot(\alpha_5^{-1}\cdot\alpha_3\cdot\alpha_5)\cdot\alpha_6$, where $\alpha_1\in{\rm TO}_{2n+1}(R)$ and $\alpha_6\in{\rm EO}_{2n+1}(R,\mathfrak m)$. Moreover, one can easily verified that $\alpha_2\cdot\alpha_5$ is in ${\rm MO}_{2n+1}(R).$ Using Lemma~\ref{Ele5.6}, $\alpha_5^{-1}\cdot\alpha_3\cdot\alpha_5$ lies in ${\rm TO}_{2n+1}(R)$. 
    \end{proof}
    Define a subset $V$ of ${\rm O}_{2n+1}(R[X,X^{-1}]$ as $V=V_1\cdot V_2\cdot V_3\cdot V_4$, where \begin{multicols}{2}
    \begin{itemize}
        \item 
    $V_1:={\rm O}_{2n+1}(R)\cdot {\rm EO}_{2n+1}(R[X])$,
 \item $V_2:=\{diag (1,X^{K_1},\dots,X^{K_n},X^{-K_1},\dots,X^{-K_n})\}$,
 \item $V_3:={\rm TO}_{2n+1}(R[X,X^{-1}])$,\item $V_4:={\rm EO}_{2n+1}(R[X,X^{-1}],\mathfrak {m}[X,X^{-1}])$
 \end{itemize}
 \end{multicols}

\begin{lemma}\label{Ele5.8}
    We have the relations $\theta_{n+1}\cdot V \cdot\theta_{n+1}^{-1} \subset V$ and $\theta_{n+1}^{-1}\cdot V\cdot\theta_{n+1} \subset V$.
\end{lemma}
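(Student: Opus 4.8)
The plan is to verify the inclusion $\theta_{n+1}\cdot V\cdot\theta_{n+1}^{-1}\subset V$ factor by factor, using the decomposition $V=V_1\cdot V_2\cdot V_3\cdot V_4$; the reverse inclusion with $\theta_{n+1}^{-1}$ is entirely analogous (conjugate by $\sigma_\delta$, or simply replace $X$ by $X^{-1}$ throughout). It suffices to show that conjugation by $\theta_{n+1}$ sends each generator type of $V_1,V_2,V_3,V_4$ into $V$, and then to reassemble, since $\theta_{n+1}$-conjugation is a group automorphism of ${\rm O}_{2n+1}(R[X,X^{-1}])$. First I would handle $V_2$: since $\theta_{n+1}$ and any $diag(1,X^{K_1},\dots,X^{K_n},X^{-K_1},\dots,X^{-K_n})$ are both diagonal, they commute, so $\theta_{n+1}\cdot V_2\cdot\theta_{n+1}^{-1}=V_2$. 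Next, $V_3={\rm TO}_{2n+1}(R[X,X^{-1}])$: an element here has the block form appearing in Section~3, namely $1\perp\begin{pmatrix}\gamma&\delta\\O&(\gamma^T)^{-1}\end{pmatrix}$ with $\gamma$ upper triangular unipotent and $\gamma^{-1}\delta\in{\rm Alt}_n$; conjugating the $(n+1)\times(n+1)$ diagonal matrix $\theta_{n+1}$-part through this block shows $\gamma\mapsto\gamma$ (diagonal conjugation preserves upper triangularity and unipotence) and $\delta\mapsto$ a matrix still satisfying the alternating condition, so the result stays in ${\rm TO}_{2n+1}(R[X,X^{-1}])\subset V_3$. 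More carefully, one reduces to generators $F_i^1(z), F_i^2(z)$ and uses the relations of Lemma~\ref{Ele5.6}-type computations: $\theta_{n+1}\cdot F_i^1(z)\cdot\theta_{n+1}^{-1}$ and $\theta_{n+1}\cdot F_i^2(z)\cdot\theta_{n+1}^{-1}$ are again of the form $F_i^1(\cdot)$, $F_i^2(\cdot)$ with the argument multiplied by a unit $X^{\pm1}$ of $R[X,X^{-1}]$, hence still lie in ${\rm TO}_{2n+1}(R[X,X^{-1}])$.

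The factor $V_4={\rm EO}_{2n+1}(R[X,X^{-1}],\mathfrak m[X,X^{-1}])$ is a normal subgroup of ${\rm O}_{2n+1}(R[X,X^{-1}])$ by the relative analogue of Theorem~\ref{Aux4.10}, and it is stable under the ring automorphism $X\mapsto X^{-1}$; conjugation by $\theta_{n+1}$ preserves the ideal $\mathfrak m[X,X^{-1}]$ (it acts by an automorphism of $R[X,X^{-1}]$ on entries that does not touch $\mathfrak m$), so $\theta_{n+1}\cdot V_4\cdot\theta_{n+1}^{-1}=V_4$. The only genuinely substantial factor is $V_1={\rm O}_{2n+1}(R)\cdot{\rm EO}_{2n+1}(R[X])$. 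For $\alpha\in{\rm O}_{2n+1}(R)$, note that $\theta_{n+1}\cdot\alpha\cdot\theta_{n+1}^{-1}$ need not lie in ${\rm O}_{2n+1}(R)$ — its entries involve $X$ and $X^{-1}$ — so this is where the preceding machinery is needed. I would write $\theta_{n+1}\cdot\alpha\cdot\theta_{n+1}^{-1}=\alpha\cdot(\alpha^{-1}\cdot\theta_{n+1}\cdot\alpha\cdot\theta_{n+1}^{-1})$ and analyze the correction term $\alpha^{-1}\cdot\theta_{n+1}\cdot\alpha\cdot\theta_{n+1}^{-1}$; the point is that this commutator, while a priori over $R[X,X^{-1}]$, can be pushed into $V$ using the transvection identities of Lemmas~\ref{Ele5.1} and \ref{Ele5.3}, which were proved precisely to control how $\theta_{n+1}$-conjugation interacts with elementary generators over $R[X]$. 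Concretely, for $\beta\in{\rm EO}_{2n+1}(R[X])$, Lemma~\ref{Ele5.3} already gives $\theta_{n+1}\cdot\beta\cdot\theta_{n+1}^{-1}\in{\rm EO}_{2n+1}(R[X])$ whenever $\beta$ lies in the congruence subgroup $\cap\,{\rm O}_{2n+1}(R[X],X R[X])$; in general one splits off the constant term $\beta_1\in{\rm EO}_{2n+1}(R)$ and treats the two pieces separately, absorbing the constant-term contribution into the ${\rm O}_{2n+1}(R)$-slot of $V_1$ (after re-expressing via Theorem~\ref{Ele5.7} if necessary) and the $X$-divisible part via Lemma~\ref{Ele5.3}.

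The main obstacle I anticipate is the $V_1$ step, specifically showing $\theta_{n+1}\cdot(\alpha\cdot\beta)\cdot\theta_{n+1}^{-1}\in V$ for $\alpha\in{\rm O}_{2n+1}(R)$ and $\beta\in{\rm EO}_{2n+1}(R[X])$ — the difficulty being that conjugation exports entries into the Laurent ring, and to reabsorb the $X^{-1}$-terms one must commute $\theta_{n+1}$ past the ${\rm O}_{2n+1}(R)$-factor, which in general only succeeds after moving through the later factors $V_2,V_3$. I would organize this by first using Theorem~\ref{Ele5.7} to write $\alpha=\tau_1\mu\tau_2\epsilon$ with $\tau_1,\tau_2\in{\rm TO}_{2n+1}(R)$, $\mu\in{\rm MO}_{2n+1}(R)$, $\epsilon\in{\rm EO}_{2n+1}(R,\mathfrak m)$, conjugating each factor: $\theta_{n+1}$-conjugates of ${\rm TO}$- and ${\rm MO}$-elements are handled as above (for a monomial matrix $\mu=\sigma_\pi d$, conjugation by $\theta_{n+1}$ produces another monomial matrix times a diagonal matrix of $X^{\pm1}$'s, which is absorbed into $V_2\cdot V_3$, using $\pi\delta=\delta\pi$ to keep the $X$-exponent pattern compatible with $V_2$'s shape $diag(1,X^{K_1},\dots)$); the $\epsilon$-factor goes into $V_4$. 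Then I would commute all the resulting $V_2,V_3,V_4$ pieces past the leftover ${\rm EO}_{2n+1}(R[X])$-part of the conjugated $\beta$ using normality of ${\rm EO}_{2n+1}$ (Theorem~\ref{Aux4.10}) and Lemma~\ref{Ele5.6}, collecting everything back into the ordered product $V_1 V_2 V_3 V_4$. Once $\theta_{n+1}\cdot V\cdot\theta_{n+1}^{-1}\subset V$ is established, the inclusion $\theta_{n+1}^{-1}\cdot V\cdot\theta_{n+1}\subset V$ follows symmetrically — apply the same argument with the ring automorphism $X\leftrightarrow X^{-1}$, noting $V$ is visibly stable under this involution — which completes the proof.
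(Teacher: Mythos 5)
Your argument for $\theta_{n+1}\cdot V\cdot\theta_{n+1}^{-1}\subset V$ is essentially the paper's proof: reduce to $V_1$ using normality of $V_4$ and the fact that $\theta_{n+1}$ normalizes $V_2\cdot V_3$; split an element of $V_1$ into its ${\rm O}_{2n+1}(R)$-part and a part congruent to the identity mod $X$, the latter handled by Lemma~\ref{Ele5.3}; and decompose the ${\rm O}_{2n+1}(R)$-part via Theorem~\ref{Ele5.7} as ${\rm TO}\cdot{\rm DO}\cdot{\rm \Pi O}\cdot{\rm TO}\cdot{\rm EO}_{2n+1}(R,\mathfrak m)$, conjugating factor by factor (the first ${\rm TO}$-conjugate lands in ${\rm EO}_{2n+1}(R[X])\subset V_1$ via Lemma~\ref{Ele5.4}, the diagonal factor commutes with $\theta_{n+1}$, the permutation commutator lands in $V_2$, the second ${\rm TO}$-conjugate in $V_3$, and the relative elementary factor in $V_4$). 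This matches the paper step for step.

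The one step that fails as stated is your derivation of the second inclusion $\theta_{n+1}^{-1}\cdot V\cdot\theta_{n+1}\subset V$ from the first via the involution $X\leftrightarrow X^{-1}$: $V$ is \emph{not} stable under this involution, because $V_1={\rm O}_{2n+1}(R)\cdot{\rm EO}_{2n+1}(R[X])$ is built from $R[X]$ and not $R[X^{-1}]$, and this asymmetry is essential (in the proof of Theorem~\ref{Hor6.6} the set $V$ is used precisely to extract the $R[X]$-half of the splitting). The correct route is to rerun the same factor-by-factor argument with $\theta_{n+1}^{-1}$ in place of $\theta_{n+1}$, which is exactly why Lemmas~\ref{Ele5.1}, \ref{Ele5.3} and \ref{Ele5.4} each state both the $\theta_{n+1}(\cdot)\theta_{n+1}^{-1}$ and the $\theta_{n+1}^{-1}(\cdot)\theta_{n+1}$ versions with the target still ${\rm EO}_{2n+1}(R[X])$. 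With that substitution your proof goes through.
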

\begin{proof}
    $V_4$ is a normal subgroup of ${\rm O}_{2n+1}(R[X,X^{-1}])$ and $\theta_{n+1}$ normalizes the subgroup $V_2\cdot V_3$. It is enough to show that, for $\alpha\in V_1,\ \theta_{n+1}\cdot \alpha \cdot {\theta^{-1}_{n+1}}\in V.$ 
    \vspace{1mm}
    
    We have  $\alpha=\alpha_o\cdot \alpha_1$, where $\alpha_o\in {\rm O}_{2n+1}(R)$ and $\alpha_1\in {\rm EO}_{2n+1}(R[X])\cap {\rm O}_{2n+1}(R[X],X\cdot R[X])$. Hence, using Lemma~\ref{Ele5.3}, it suffices to prove that $\theta_{n+1}\cdot \alpha \cdot {\theta^{-1}_{n+1}}\subset V$, for $\alpha\in {\rm O}_{2n+1}(R)$. By Theorem~\ref{Ele5.7}, the matrix $\alpha$ can be written in the form $\alpha=\alpha_1\cdot\alpha_2\cdot\alpha_3\cdot\alpha_4\cdot\alpha_5$, where $\alpha_1,\alpha_4\in {\rm TO}_{2n+1}(R)$, $\alpha_2\in {\rm DO}_{2n+1}(R)$, $\alpha_3\in {\rm \Pi O}_{2n+1}(R)$, and $\alpha_5\in {\rm EO}_{2n+1}(R,\mathfrak{m})$.
     Now, we observe the following, \begin{enumerate}[label=(\roman*)]
        \item $\theta_{n+1}\cdot \alpha_1 \cdot {\theta^{-1}_{n+1}} \in {\rm O}_{2n+1}(R[X]$, hence in ${\rm EO}_{2n+1}(R[X])$,
        \item $[\alpha_3,\theta_{n+1}] \in V_2$,
        \item $\theta_{n+1}\cdot \alpha_4 \cdot {\theta^{-1}_{n+1}}\in {\rm TO}_{2n+1}(R[X,X^{-1}])$, and
        \item $\theta_{n+1}\cdot \alpha_5 \cdot {\theta^{-1}_{n+1}} \in {\rm EO}_{2n+1}(R[X,X^{-1}],\mathfrak {m}[X,X^{-1}])$.
    \end{enumerate}
     Thus, $\theta_{n+1}\cdot \alpha \cdot {\theta_{n+1}}=(\theta^{-1}_{n+1}\cdot \alpha_1 \cdot {\theta^{-1}_{n+1}}\cdot \alpha_2\cdot\alpha_3)\cdot [\alpha_3^{-1},\theta_{n+1}]\cdot (\theta_{n+1}\cdot\alpha_4\cdot\theta_{n+1}^{-1})\cdot(\theta_{n+1}\cdot\alpha_5\cdot\theta_{n+1}^{-1})\in V $.
\end{proof} 
\begin{corollary}\label{Ele5.9}
    For any $i\neq j$, and $a\in R$, $F_i^1(a\cdot X)\cdot V\subset V$,  $F_i^2(a\cdot X)\cdot V\subset V$, $F_i^1(a\cdot X^{-1})\cdot V\subset V$, and $F_i^2(a\cdot X^{-1})\cdot V\subset V$.
\end{corollary}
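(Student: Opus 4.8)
The plan is to deduce this directly from Lemma~\ref{Ele5.8} together with the definition of $V$, using the diagonal conjugation matrices $\theta_{n+1}$ and $\theta_{n+1}^{-1}$ to absorb the variable $X$ (respectively $X^{-1}$) appearing in the arguments of the generators. First I would observe that it suffices to treat $F_i^1(a\cdot X)$ and $F_i^2(a\cdot X)$; the cases with $X^{-1}$ follow by the identical argument with $\theta_{n+1}$ replaced by $\theta_{n+1}^{-1}$ and by invoking the second inclusion $\theta_{n+1}^{-1}\cdot V\cdot\theta_{n+1}\subset V$ of Lemma~\ref{Ele5.8}.

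Next I would write $F_i^1(a\cdot X)=\theta_{n+1}\cdot F_i^1(a)\cdot\theta_{n+1}^{-1}$, which one checks from the matrix form $F_i^1(z)=I_{2n+1}+e_{1,n+i+1}(z)-e_{i+1,1}(2z)-e_{i+1,n+i+1}(z^2)$: conjugating by $\theta_{n+1}=\mathrm{diag}(X,\dots,X,1,\dots,1)$ (with $X$ in the first $n+1$ positions) scales the $(1,n+i+1)$ entry by $X$, the $(i+1,1)$ entry by $X$, and the $(i+1,n+i+1)$ entry by $X^{2}$, exactly matching the substitution $z\mapsto zX$. The same computation, using $F_i^2(z)=I_{2n+1}+e_{1,i+1}(z)-e_{n+i+1,1}(2z)-e_{n+i+1,i+1}(z^2)$, gives $F_i^2(a\cdot X)=\theta_{n+1}\cdot F_i^2(a)\cdot\theta_{n+1}^{-1}$. (Here the roles of which entries lie in the first $n+1$ block versus the last $n$ block need to be tracked carefully against the indexing conventions of $\theta_{n+1}$ and of the generators, but this is the only computational point.)

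Now the chain of inclusions is immediate: $F_i^1(a)$ and $F_i^2(a)$ lie in ${\rm EO}_{2n+1}(R)\subset {\rm O}_{2n+1}(R)\cdot{\rm EO}_{2n+1}(R[X])=V_1\subset V$, so for any $g\in V$ we have
\[
F_i^1(a\cdot X)\cdot g=\theta_{n+1}\cdot\bigl(F_i^1(a)\cdot(\theta_{n+1}^{-1}\cdot g\cdot\theta_{n+1})\bigr)\cdot\theta_{n+1}^{-1}\in\theta_{n+1}\cdot\bigl(V\cdot V\bigr)\cdot\theta_{n+1}^{-1}\subset\theta_{n+1}\cdot V\cdot\theta_{n+1}^{-1}\subset V,
\]
where we used that $V$ is closed under multiplication (each $V_k$ is a subgroup and the product set $V_1V_2V_3V_4$ is stable under left multiplication by any of its factors, since $V_2,V_3$ normalize and $V_4$ is normal — these closure facts are already established in the surrounding text) and then Lemma~\ref{Ele5.8} twice. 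The same line with $F_i^2$ handles the second inclusion, and conjugating throughout by $\theta_{n+1}^{-1}$ instead handles the $X^{-1}$ cases.

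The only genuine obstacle is the bookkeeping in the identity $F_i^j(a\cdot X)=\theta_{n+1}\cdot F_i^j(a)\cdot\theta_{n+1}^{-1}$: one must confirm that every nonzero off-diagonal entry of $F_i^j$ connects an index in the "$X$-block" of $\theta_{n+1}$ to one outside it (so that conjugation multiplies by $X$), and that the quadratic term picks up $X^2$ — consistent with replacing $a$ by $aX$. Everything else is formal manipulation of the already-proved closure and conjugation-invariance properties of $V$.
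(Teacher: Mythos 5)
Your argument hinges on the identity $F_i^j(a\cdot X)=\theta_{n+1}\cdot F_i^j(a)\cdot\theta_{n+1}^{-1}$, and that identity is false. Writing $\theta_{n+1}=\mathrm{diag}(d_1,\dots,d_{2n+1})$ with $d_1=\dots=d_{n+1}=X$ and $d_{n+2}=\dots=d_{2n+1}=1$, conjugation scales the $(k,l)$ entry by $d_k\,d_l^{-1}$. For $F_i^1(z)=I_{2n+1}+e_{1,n+i+1}(z)-e_{i+1,1}(2z)-e_{i+1,n+i+1}(z^2)$ the three off-diagonal entries sit at $(1,n+i+1)$, $(i+1,1)$ and $(i+1,n+i+1)$; since both $1$ and $i+1$ lie in the $X$-block while $n+i+1$ does not, the scale factors are $X$, $1$ and $X$ respectively --- not $X$, $X$ and $X^2$ as you assert. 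Hence
\[
\theta_{n+1}\cdot F_i^1(z)\cdot\theta_{n+1}^{-1}=I_{2n+1}+e_{1,n+i+1}(zX)-e_{i+1,1}(2z)-e_{i+1,n+i+1}(z^2X),
\]
which is not $F_i^1(zX)$; indeed it is not of the form $F_i^1(w)$ for any $w$, and it is not even orthogonal, because $\theta_{n+1}$ does not rescale the form $2\perp\widetilde{\psi}_n$ by a single unit. The analogous computation fails for $F_i^2$ as well. This is exactly the bookkeeping point you flagged as ``the only computational point,'' and it does not check out: the conjugation identity of this type is only valid for generators whose every nonzero off-diagonal entry joins an index inside the $X$-block to one outside it (this is why the paper applies it to the hyperbolic generators ${\rm E}^o_{i,j}$ with $1\le i\le n<j\le 2n$, after a permutation), and $F_i^1,F_i^2$ involve the distinguished index $1$ and do not have this property.

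Two further structural points. For the $a\cdot X$ cases no conjugation is needed at all: $F_i^1(aX),F_i^2(aX)$ already lie in ${\rm EO}_{2n+1}(R[X])$, and left multiplication by such an element preserves $V_1={\rm O}_{2n+1}(R)\cdot{\rm EO}_{2n+1}(R[X])$ (commute it past the ${\rm O}_{2n+1}(R)$ factor using normality, Theorem~\ref{Aux4.10}), hence preserves $V$; this is the paper's ``clearly.'' It is only the $a\cdot X^{-1}$ cases that require a conjugation device, and yours breaks on the computation above. Separately, your appeal to $V\cdot V\subset V$ is not established in the paper and is not obviously true (there is no visible way to absorb $V_3={\rm TO}_{2n+1}(R[X,X^{-1}])$ past a subsequent $V_1$ factor); you do not actually need it, since $F_i^j(a)\cdot V\subset V$ already follows from $F_i^j(a)\in{\rm EO}_{2n+1}(R)$ and absorption into $V_1$, but as written that step is also unjustified.
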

\begin{proof}
    Clearly, $F_i^1(a\cdot X)\cdot V\subset V$ and $F_i^2(a\cdot X)\cdot V\subset V$. For the reverse inclusion, we have the existence of a permutation $\pi$ such that $\pi(i)$ and $\pi\delta(j)$ lie in the set $\{1,\dots,n\}$. Since $\sigma_\pi\cdot V\subset V$ and $\sigma_\pi\cdot {\rm E}^o_{i,j}(a\cdot X^{-1})\cdot \sigma_\pi^{-1}= {\rm E}^o_{\pi(i),\pi(j)}(a\cdot X^{-1})$, for $1\leq i\leq n$ and $n+1\leq j\leq 2n$, we have $ {\rm E}^o_{i,j}(a\cdot X^{-1})=\theta_{n+1}^{-1}\cdot  {\rm E}^o_{i,j}(a)\cdot \theta_{n+1}$ and $ {\rm E}^o_{i,j}(a\cdot X^{-1})\cdot V=\theta_{n+1}^{-1}\cdot ({\rm E}^o_{i,j}(a)\cdot (\theta_{n+1}\cdot V\cdot \theta_{n+1}^{-1}))\cdot \theta_{n+1}\subset V$.
\end{proof}
    \begin{corollary}\label{Ele5.10}
     The elementary orthogonal ${\rm EO}_{2n+1}(R[X,X^{-1}])$ is contained in the set $V$.
    \end{corollary}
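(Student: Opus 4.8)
The plan is to show that $V$ is stable under left multiplication by a generating set of ${\rm EO}_{2n+1}(R[X,X^{-1}])$ together with the inverses of those generators. Since the identity matrix lies in each of $V_1,V_2,V_3,V_4$, it lies in $V$, so once such stability is established every $\gamma\in{\rm EO}_{2n+1}(R[X,X^{-1}])$ satisfies $\gamma=\gamma\cdot I_{2n+1}\in\gamma\cdot V\subseteq V$. Because ${\rm EO}_{2n+1}$ is generated by the matrices $F_i^1(z)$ and $F_i^2(z)$ and $F_i^j(z)^{-1}=F_i^j(-z)$, it suffices to prove $F_i^j(f)\cdot V\subseteq V$ for all $f\in R[X,X^{-1}]$, $1\le i\le n$ and $j\in\{1,2\}$.

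First I would record that each $F_i^j$ is additive in its parameter, i.e. $F_i^j(z)\cdot F_i^j(z')=F_i^j(z+z')$; this is an immediate computation with the defining matrix units, whose only nonzero product is of the shape $e_{i+1,1}\cdot e_{1,n+i+1}=e_{i+1,n+i+1}$. Writing $f=\sum_k c_kX^k$ as a finite sum with $c_k\in R$ gives $F_i^j(f)=\prod_k F_i^j(c_kX^k)$, so it is enough to treat the monomial case $F_i^j(cX^k)$ with $c\in R$ and $k\in\mathbb Z$. If $k\ge 0$ then $cX^k\in R[X]$, so $F_i^j(cX^k)\in{\rm EO}_{2n+1}(R[X])\subseteq V_1$; since ${\rm EO}_{2n+1}(R[X])$ is normal in ${\rm O}_{2n+1}(R[X])$ by Theorem~\ref{Aux4.10} and ${\rm O}_{2n+1}(R)$ is a subgroup of ${\rm O}_{2n+1}(R[X])$, the set $V_1={\rm O}_{2n+1}(R)\cdot{\rm EO}_{2n+1}(R[X])$ is a subgroup of ${\rm O}_{2n+1}(R[X,X^{-1}])$, whence $V_1\cdot V=V_1\cdot V_1\cdot V_2\cdot V_3\cdot V_4=V$ and the claim follows.

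For $k=-m$ with $m\ge 1$ I would mimic the proof of Corollary~\ref{Ele5.9}. There the case $m=1$ is handled by first conjugating by a permutation matrix $\sigma_\pi\in{\rm \Pi O}_{2n+1}(R)$ (which satisfies $\sigma_\pi^{\pm1}\cdot V\subseteq V$, the inverse being again such a permutation matrix) so that the relevant elementary generator has its indices in the range where conjugation by $\theta_{n+1}^{-1}$ multiplies the displayed parameter by $X^{-1}$; this yields an identity expressing $F_i^j(cX^{-1})$ as $\sigma_\pi^{-1}\theta_{n+1}^{-1}\bigl(\sigma_\pi F_i^j(c)\sigma_\pi^{-1}\bigr)\theta_{n+1}\sigma_\pi$, and then $F_i^j(cX^{-1})\cdot V\subseteq V$ follows from $\sigma_\pi^{\pm1}V\subseteq V$, from $\sigma_\pi F_i^j(c)\sigma_\pi^{-1}\in{\rm EO}_{2n+1}(R)\subseteq V_1$ (so it absorbs $V$ on the left), and from Lemma~\ref{Ele5.8}. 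Iterating the $\theta_{n+1}^{-1}$–conjugation $m$ times produces the analogous identity for $F_i^j(cX^{-m})$, and using $\theta_{n+1}^{\pm m}V\theta_{n+1}^{\mp m}\subseteq V$ (obtained by applying Lemma~\ref{Ele5.8} $m$ times) in place of the single application we get $F_i^j(cX^{-m})\cdot V\subseteq V$.

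Combining the three cases, $F_i^j(f)\cdot V\subseteq V$ for every $f\in R[X,X^{-1}]$, hence ${\rm EO}_{2n+1}(R[X,X^{-1}])\cdot V\subseteq V$ and in particular ${\rm EO}_{2n+1}(R[X,X^{-1}])\subseteq V$. I expect the only genuinely delicate point to be the negative–power step: one must verify that, after the permutation normalization, conjugation by $\theta_{n+1}^{-1}$ really does rescale the parameter of the elementary generator by exactly $X^{-1}$ (this is where the interaction between $\theta_{n+1}$ and the odd–dimensional form enters, and is what forces the use of $\sigma_\pi$), and then keep track of the iterated inclusions $V_1V=V$, $\sigma_\pi^{\pm1}V\subseteq V$ and $\theta_{n+1}^{\pm1}V\theta_{n+1}^{\mp1}\subseteq V$; everything else is formal bookkeeping.
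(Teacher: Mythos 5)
Your overall strategy --- reduce to a generating set, show $V$ is stable under left multiplication by each generator, and conclude from $I_{2n+1}\in V$ --- is the same as the paper's; the additivity of $F_i^j$ in its parameter, the reduction to monomials, and the absorption $V_1\cdot V=V$ for nonnegative powers of $X$ (using that $V_1$ is a subgroup by normality of ${\rm EO}_{2n+1}(R[X])$) are all correct. The paper itself simply asserts that ${\rm EO}_{2n+1}(R[X,X^{-1}])$ is generated by the four families $F_i^1(aX)$, $F_i^2(aX)$, $F_i^1(aX^{-1})$, $F_i^2(aX^{-1})$ and quotes Corollary~\ref{Ele5.9}.

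The gap is exactly at the point you flagged, and it is fatal as written: the identity $F_i^j(cX^{-1})=\sigma_\pi^{-1}\theta_{n+1}^{-1}\bigl(\sigma_\pi F_i^j(c)\sigma_\pi^{-1}\bigr)\theta_{n+1}\sigma_\pi$ is false for the odd-type generators $j\in\{1,2\}$, and so is its $m$-fold iterate. Conjugation by the diagonal matrix $\theta_{n+1}^{-1}$ multiplies the $(k,l)$ entry by $d_k d_l^{-1}$; for $F_i^1(c)=I_{2n+1}+c\,e_{1,n+i+1}-2c\,e_{i+1,1}-c^2 e_{i+1,n+i+1}$ the three off-diagonal entries acquire the factors $X^{-1}$, $1$, $X^{-1}$ respectively, whereas $F_i^1(cX^{-1})$ requires $X^{-1}$, $X^{-1}$, $X^{-2}$. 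No permutation normalization can cure this: every $\sigma_\pi\in{\rm \Pi O}_{2n+1}(R)$ necessarily fixes the distinguished index $1$ (the only basis vector with nonzero $q$-value), so after conjugation the generator still occupies the first row and column, where $\theta_{n+1}$ acts inhomogeneously. The permutation-plus-$\theta$ trick in Corollary~\ref{Ele5.9} is really tailored to the even-type generators $1\perp oe_{ij}$, which avoid the first coordinate. Consequently your treatment of the monomials $cX^{-m}$, including the base case $m=1$, does not go through. To repair the argument you should either take Corollary~\ref{Ele5.9} as a black box for the $X^{-1}$ case and supply the commutator identities showing that the four listed families indeed generate ${\rm EO}_{2n+1}(R[X,X^{-1}])$ (so that $m\geq 2$ never arises), or handle $F_i^j(g)$ for $g\in X^{-1}R[X^{-1}]$ by the transvection computations in the spirit of Lemma~\ref{Ele5.1} and Theorem~\ref{Aux4.8} rather than by a naive rescaling identity.
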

    \begin{proof}
    The group ${\rm EO}_{2n+1}(R[X,X^{-1}])$ is generated by matrices of the form $F_i^1(a\cdot X)$, $F_i^2(a\cdot X)$, $F_i^1(a\cdot X^{-1})$, and $F_i^2(a\cdot X^{-1})$. Hence, the result follows directly from Corollary~\ref{Ele5.9}.
\end{proof}

    \begin{theorem}\label{Ele5.11}
        If $\alpha \in {\rm EO}_{2n+1}(R[X,X^{-1}])\cap {\rm O}_{2n+1}(R[X,X^{-1}],{\mathfrak{m}}[X,X^{-1}])$, then $\alpha = \alpha_1\cdot\alpha_2$ for certain elements $\alpha_1\in {\rm EO}_{2n+1}(R[X],{\mathfrak{m}}[X])$ and $\alpha_2\in {\rm EO}_{2n+1}(R[X,X^{-1}],{\mathfrak{m}}[X,X^{-1}])$.
    \end{theorem}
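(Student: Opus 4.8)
The plan is to run $\alpha$ through the inclusion ${\rm EO}_{2n+1}(R[X,X^{-1}])\subset V=V_1V_2V_3V_4$ of Corollary~\ref{Ele5.10} and then to peel off the factors coming from $V_1,V_2,V_3$ using the hypothesis $\alpha\equiv I_{2n+1}\bmod\mathfrak m[X,X^{-1}]$ together with the lifting statements of Theorem~\ref{Aux4.15}. So first I would write $\alpha=\beta_1\beta_2\beta_3\beta_4$ with $\beta_1\in{\rm O}_{2n+1}(R)\cdot{\rm EO}_{2n+1}(R[X])$, $\beta_2={\rm diag}(1,X^{K_1},\dots,X^{K_n},X^{-K_1},\dots,X^{-K_n})$, $\beta_3\in{\rm TO}_{2n+1}(R[X,X^{-1}])$ and $\beta_4\in{\rm EO}_{2n+1}(R[X,X^{-1}],\mathfrak m[X,X^{-1}])$.

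Next I would reduce modulo $\mathfrak m$ via $R[X,X^{-1}]\to k[X,X^{-1}]$: this kills both $\alpha$ and $\beta_4$, so $\bar\beta_1\bar\beta_2\bar\beta_3=I$ in ${\rm O}_{2n+1}(k[X,X^{-1}])$. Since $\bar\beta_1$, hence $\bar\beta_1^{-1}$, has entries in $k[X]$, the product $\bar\beta_2\bar\beta_3$ lies in ${\rm O}_{2n+1}(k[X])$. Writing $\bar\beta_3$ in the block form of an element of ${\rm TO}_{2n+1}$, with upper unitriangular corner $\gamma$, the diagonal entries of the upper-left and lower-right $n\times n$ blocks of $\bar\beta_2\bar\beta_3$ are $X^{K_i}$ and $X^{-K_i}$ respectively; both being polynomial forces $K_i=0$ for all $i$, so $\beta_2=I_{2n+1}$. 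Inverting the unitriangular $\gamma$ then shows $\bar\beta_3$ has all its entries in $k[X]$, i.e. $\bar\beta_3\in{\rm TO}_{2n+1}(k[X])$, and $\bar\beta_1\bar\beta_3=I$.

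Now I would use Theorem~\ref{Aux4.15}(i) for the surjection $R[X]\twoheadrightarrow k[X]$ to lift $\bar\beta_3$ to $u\in{\rm TO}_{2n+1}(R[X])$. Then $w:=\beta_3u^{-1}\in{\rm TO}_{2n+1}(R[X,X^{-1}])$ reduces to $I$ modulo $\mathfrak m[X,X^{-1}]$, so Theorem~\ref{Aux4.15}(iii) places it in ${\rm EO}_{2n+1}(R[X,X^{-1}],\mathfrak m[X,X^{-1}])=V_4$. Since $V_4$ is normal in ${\rm O}_{2n+1}(R[X,X^{-1}])$ and $u\in{\rm O}_{2n+1}(R[X])$,
\[
\alpha=\beta_1\,w\,u\,\beta_4=(\beta_1u)\cdot\big(u^{-1}wu\cdot\beta_4\big),
\]
so $\alpha_2:=u^{-1}wu\cdot\beta_4\in V_4={\rm EO}_{2n+1}(R[X,X^{-1}],\mathfrak m[X,X^{-1}])$. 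The other factor $\alpha_1:=\beta_1u$ lies in ${\rm O}_{2n+1}(R)\cdot{\rm EO}_{2n+1}(R[X])$ (because $u\in{\rm TO}_{2n+1}(R[X])\subset{\rm EO}_{2n+1}(R[X])$), reduces to $\bar\beta_1\bar\beta_3=I$ modulo $\mathfrak m[X]$, and equals $\alpha\alpha_2^{-1}\in{\rm EO}_{2n+1}(R[X,X^{-1}])$; applying Lemma~\ref{Aux4.11}(ii) to the tower $R\subset R[X]\subset R[X,X^{-1}]$ (here $R$ is a retract of $R[X,X^{-1}]$ via $X\mapsto1$) gives $\alpha_1\in{\rm EO}_{2n+1}(R[X])$. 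Thus $\alpha=\alpha_1\alpha_2$ with $\alpha_1\in{\rm EO}_{2n+1}(R[X])\cap{\rm O}_{2n+1}(R[X],\mathfrak m[X])$.

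The hard part will be the final upgrade of $\alpha_1$ from ${\rm EO}_{2n+1}(R[X])\cap{\rm O}_{2n+1}(R[X],\mathfrak m[X])$ to the relative group ${\rm EO}_{2n+1}(R[X],\mathfrak m[X])$, since in general an absolutely elementary matrix that is congruent to $I$ modulo an ideal need not be relatively elementary. I would handle this by exploiting that $R$ is local: evaluating at $X=0$ gives $\alpha_1|_{X=0}\in{\rm EO}_{2n+1}(R)\cap{\rm O}_{2n+1}(R,\mathfrak m)$, and for a local ring the relative ${\rm KO}_1$ is a subgroup of $1+\mathfrak m$, hence injects, so $\alpha_1|_{X=0}\in{\rm EO}_{2n+1}(R,\mathfrak m)\subset{\rm EO}_{2n+1}(R[X],\mathfrak m[X])$. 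Replacing $\alpha_1$ by $\alpha_1\cdot(\alpha_1|_{X=0})^{-1}$, one may assume $\alpha_1|_{X=0}=I$, so that $\alpha_1\in{\rm EO}_{2n+1}(R[X])\cap{\rm O}_{2n+1}(R[X],XR[X])$; since the augmentation $R[X]\to R$ is split, this forces $\alpha_1\in{\rm EO}_{2n+1}(R[X],XR[X])$, and a last combination of this with the congruence $\alpha_1\equiv I\bmod\mathfrak m[X]$ (again using that $R$ is local) places $\alpha_1$ in ${\rm EO}_{2n+1}(R[X],\mathfrak m[X])$. With $\alpha_1$ and $\alpha_2$ as above, $\alpha=\alpha_1\alpha_2$ is the required factorization.
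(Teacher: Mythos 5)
Your proposal follows essentially the same route as the paper's proof: decompose $\alpha$ via $V=V_1\cdot V_2\cdot V_3\cdot V_4$ (Corollary~\ref{Ele5.10}), reduce modulo $\mathfrak m$ to force $\beta_2=I_{2n+1}$ and $\bar\beta_3\in{\rm TO}_{2n+1}(k[X])$, lift $\bar\beta_3$ to ${\rm TO}_{2n+1}(R[X])$ by Theorem~\ref{Aux4.15}(i), and absorb the discrepancy into $V_4$ using Theorem~\ref{Aux4.15}(iii), normality, and Lemma~\ref{Aux4.11}. Your closing paragraph upgrading $\alpha_1$ to the relative group ${\rm EO}_{2n+1}(R[X],\mathfrak m[X])$ addresses a point the paper simply glosses over --- its own proof only concludes $\alpha_1\in{\rm EO}_{2n+1}(R[X])$, and in the subsequent application (Theorem~\ref{Hor6.5}) only membership in $G_+={\rm EO}_{2n+1}(R[X])\cap{\rm O}_{2n+1}(\mathfrak m[X])$ is used, which already follows from the congruence $\alpha_1\equiv I_{2n+1}\bmod\mathfrak m[X]$.
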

    \begin{proof}
    Let $\alpha \in {\rm EO}_{2n+1}(R[X,X^{-1}])$. Then $\alpha\in V$ and $\alpha=\beta_1\cdot\beta_2\cdot\beta_3\cdot\beta_4$,  where $\beta_i\in V_i$. Since $\alpha \equiv I_{2n+1}~{\rm mod }(\mathfrak{m}[X,X^{-1}])$, we have $(\phi(\beta_1))^{-1}=\phi(\beta_2)\cdot\phi(\beta_3)$. If $diag (1,X^{K_1},\dots,X^{K_n},X^{-K_1},\dots,X^{-K_n})=\phi(\beta_2)$, then $1$, $X^{K_1}$,$\dots,X^{-K_n}$ are equal to the diagonal elements of the matrix $(\phi(\beta_1))^{-1}=\phi(\beta_2)\cdot\phi(\beta_3)$. Consequently, we get $X^{\pm k_i} \in k[X]$ (Since the inverse of $\phi(\beta_1)$ will lie in ${\rm EO}_{2n+1}(k[X])\cdot {\rm O}_{2n+1}(k)$), hence $k_i=0$ and $\beta_2=I_{2n+1}$. Also, $\phi(\beta_3)=(\phi(\beta_1))^{-1} \in {\rm TO}_{2n+1}(k[X])$. Hence, by Theorem~\ref{Aux4.15}, there exists $\gamma \in {\rm TO}_{2n+1}(R[X]) $ such that $\phi(\gamma)=\phi(\beta_3)$. By Theorem~\ref{Aux4.15} (iii), $\alpha_2=(\gamma^{-1}\cdot\beta_3)\cdot \beta_4$ lies in ${\rm EO}_{2n+1}(R[X,X^{-1}],{\mathfrak{m}}[X,X^{-1}])$, and $\alpha_1=\beta_1\cdot\gamma$ lies in ${\rm O}_{2n+1}(R)\cdot {\rm EO}_{2n+1}(R[X])\cap {\rm EO}_{2n+1}(R[X,X^{-1}])={\rm EO}_{2n+1}(R[X])$.
    \end{proof}
\section{Horrocks' theorem}
In this section, we prove the main result of this article, that is Horrocks' theorem. We prove additional results that lead us to this theorem.
Throughout this section, assume that $R$ is a local ring (unless stated otherwise) with maximal ideal $\mathfrak{m}$ and residue file $k$. 

\vspace{1mm}

 Set $G_{\pm}= {\rm EO}_{2n+1}(R[X^{\pm1}])\cap {\rm O}_{2n+1}(m[X^{\pm1}]) \subset {\rm EO}_{2n+1}(R[X,X^{-1}])\cap {\rm O}_{2n+1}(m[X,X^{-1}])$, and define $ G=G_+\cdot G_-$. Let $H$ denote the subset of ${\rm O}_{2n+1}(R[X,X^{-1}])$ consisting of all elements $h$ such that $h\cdot G \cdot h^{-1}\in G$.

\begin{lemma}\label{Hor6.1}
    \begin{enumerate}[label=\emph{(\roman*)}]
        \item The set $H$ is multiplicatively closed.
        \item $ {\rm EO}_{2n+1}(R)\subset H.$
        \item If $n\geq 3$, then $\theta_{n+1},\theta_{n+1}^{-1} \in H.$
    \end{enumerate}
    \end{lemma}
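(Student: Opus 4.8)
The plan is to verify the three assertions in turn, each reducing to a property already established for the "$V$-machinery'' of Section~5. For part (i), suppose $h_1,h_2\in H$. Then for any $g\in G$ we have $(h_1h_2)g(h_1h_2)^{-1}=h_1\bigl(h_2 g h_2^{-1}\bigr)h_1^{-1}$; since $h_2\in H$ the inner element lies in $G$, and since $h_1\in H$ applying the conjugation by $h_1$ keeps it in $G$. Hence $h_1h_2\in H$, so $H$ is multiplicatively closed. (One should also note $1\in H$ trivially, and that $H$ is in fact a subgroup, since $h\in H$ forces $h^{-1}Gh\subset G$ by applying $h^{-1}(\cdot)h$ to $hGh^{-1}\subset G$ together with $G=G^{-1}$, but only multiplicative closure is claimed here.)

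For part (ii), it suffices to check that each standard generator $F_i^1(z)$ and $F_i^2(z)$ of ${\rm EO}_{2n+1}(R)$, with $z\in R$, normalizes $G$. Take $g=g_+\cdot g_-$ with $g_\pm\in G_\pm$. Since ${\rm EO}_{2n+1}(R)\subset{\rm EO}_{2n+1}(R[X^{\pm1}])$ and, by Theorem~\ref{Aux4.10}, ${\rm EO}_{2n+1}(R[X^{\pm1}])$ is normal in ${\rm O}_{2n+1}(R[X^{\pm1}])$, conjugating $g_\pm$ by $F_i^j(z)$ keeps it in ${\rm EO}_{2n+1}(R[X^{\pm1}])$; and because $F_i^j(z)\in{\rm O}_{2n+1}(R)$ reduces mod $\mathfrak m[X^{\pm1}]$ to a matrix in ${\rm O}_{2n+1}(k[X^{\pm1}])$ while $g_\pm\equiv I\ (\mathrm{mod}\ \mathfrak m[X^{\pm1}])$, the conjugate still reduces to the identity modulo $\mathfrak m[X^{\pm1}]$, hence lies in $G_\pm$. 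Therefore $F_i^j(z)\,g\,F_i^j(z)^{-1}=\bigl(F_i^j(z)g_+F_i^j(z)^{-1}\bigr)\bigl(F_i^j(z)g_-F_i^j(z)^{-1}\bigr)\in G_+\cdot G_-=G$.

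For part (iii), I would argue that $\theta_{n+1}$ and $\theta_{n+1}^{-1}$ carry $G_+$ into $G_+$ and $G_-$ into $G_-$ (then the product is handled as in (ii)). Consider $g_+\in G_+={\rm EO}_{2n+1}(R[X])\cap{\rm O}_{2n+1}(\mathfrak m[X])$. By Lemma~\ref{Ele5.3}, $\theta_{n+1}\cdot g_+\cdot\theta_{n+1}^{-1}$ and $\theta_{n+1}^{-1}\cdot g_+\cdot\theta_{n+1}$ already lie in ${\rm EO}_{2n+1}(R[X])$ once one checks $g_+\in{\rm O}_{2n+1}(R[X],X\cdot R[X])$; the general case follows by splitting off the free term as in Lemma~\ref{Ele5.4}, using that the free term of $g_+$ lies in ${\rm EO}_{2n+1}(R)\cap{\rm O}_{2n+1}(\mathfrak m)$ and invoking part (ii) for that factor. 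Conjugation by the diagonal matrix $\theta_{n+1}$ visibly preserves the congruence condition modulo $\mathfrak m[X]$, so $\theta_{n+1}^{\pm1}g_+\theta_{n+1}^{\mp1}\in G_+$; the identical argument with $X$ replaced by $X^{-1}$ (i.e.\ $\theta_{n+1}^{-1}$ playing the role of $\theta_{n+1}$) handles $G_-$. Combining, $\theta_{n+1}^{\pm1}\,G\,\theta_{n+1}^{\mp1}\subset G$, so $\theta_{n+1},\theta_{n+1}^{-1}\in H$.

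The main obstacle is part (iii): one cannot conjugate an arbitrary element of $G_+$ by $\theta_{n+1}$ and stay in ${\rm EO}_{2n+1}$ without first isolating the free term, and this is exactly the content encoded in Lemmas~\ref{Ele5.1}--\ref{Ele5.4} (where the hypothesis $n\ge 3$ is genuinely used through Theorem~\ref{Aux4.8} and Corollary~\ref{Aux4.9}). Parts (i) and (ii) are essentially formal once Theorem~\ref{Aux4.10} is in hand; the care needed in (iii) is to route every factor through the correct earlier lemma and to keep track of the reduction modulo $\mathfrak m$.
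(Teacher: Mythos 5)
Parts (i) and (ii) of your argument are correct and coincide with the paper's. The problem is part (iii): your central claim that $\theta_{n+1}$ carries $G_+$ into $G_+$ (and $G_-$ into $G_-$) is false. Take $a\in\mathfrak m$ and $g_+=F_1^2(a)=I_{2n+1}+e_{1,2}(a)-e_{n+2,1}(2a)-e_{n+2,2}(a^2)$, which lies in $G_+$; then $\theta_{n+1}\,g_+\,\theta_{n+1}^{-1}=I_{2n+1}+e_{1,2}(a)-e_{n+2,1}(2aX^{-1})-e_{n+2,2}(a^2X^{-1})$ is not even in ${\rm O}_{2n+1}(R[X])$, let alone in $G_+$. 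Your reduction to the free term does not repair this: Lemma~\ref{Ele5.3} applies only to the factor of $g_+$ congruent to $I$ modulo $X$, Lemma~\ref{Ele5.4} needs the \emph{hypothesis} that the conjugate is already polynomial in $X$ (which is exactly what fails here), and your appeal to part (ii) for the constant factor $\beta_1\in{\rm EO}_{2n+1}(R)\cap{\rm O}_{2n+1}(\mathfrak m)$ is a non sequitur --- part (ii) concerns conjugating $G$ \emph{by} elements of ${\rm EO}_{2n+1}(R)$, and says nothing about $\theta_{n+1}\beta_1\theta_{n+1}^{-1}$. That constant factor is precisely the hard case, and it is the substance of the lemma.

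The paper handles it by a different decomposition: for $\alpha\in G$ it writes $\alpha=\alpha_+\cdot\alpha_o\cdot\alpha_-$ with $\alpha_\pm$ congruent to $I$ modulo $X^{\pm1}$ and $\alpha_o$ constant, then uses Theorem~\ref{Ele5.7} to factor $\alpha_o=\delta_1\delta_2\delta_3\delta_4$ with $\delta_1,\delta_3\in{\rm TO}_{2n+1}(R)$, $\delta_2\in{\rm MO}_{2n+1}(R)$, $\delta_4\in{\rm EO}_{2n+1}(R,\mathfrak m)$, and regroups
$\theta_{n+1}\alpha\theta_{n+1}^{-1}=(\theta_{n+1}\alpha_+\delta_1\theta_{n+1}^{-1})\cdot\delta_2\cdot(\theta_{n+1}\delta_3\delta_4\alpha_-\theta_{n+1}^{-1})$.
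The block upper-triangular shape of elements of ${\rm TO}_{2n+1}(R)$ makes the first conjugate polynomial in $X$, so Lemma~\ref{Ele5.4} places it in $G_+$, while $\delta_3,\delta_4$ are pushed to the $X^{-1}$ side and land in $G_-$. Some such device is unavoidable: at best you could hope to show $\theta_{n+1}G_+\theta_{n+1}^{-1}\subset G$, but then $\theta_{n+1}G\theta_{n+1}^{-1}\subset G\cdot G$, and $G=G_+\cdot G_-$ is only a product of two subgroups, not itself a group, so this does not give membership in $G$. You need to fold the constant part into the two sides in the correct order, which is what the ${\rm TO}\cdot{\rm MO}\cdot{\rm TO}\cdot{\rm EO}(R,\mathfrak m)$ decomposition accomplishes.
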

    \begin{proof}
          Let $\alpha, \beta \in H$. Then $(\alpha\cdot \beta) \cdot G \cdot (\alpha\cdot\beta)^{-1}=\alpha \cdot (\beta\cdot G\cdot \beta^{-1})\alpha^{-1}\in G$,  which proves (i). Assertion (ii) is a consequence of the normality of the elementary orthogonal group ${\rm EO}_{2n+1}(R)$. 
          
          \vspace{1mm}
          
           Let $\alpha\in G$. Then $\alpha=\alpha_+\cdot \alpha_o\cdot \alpha_-$, where $\alpha_{\pm}\in {\rm EO}_{2n+1}(R[X^{\pm1}])\cap {\rm O}_{2n+1}(X^{\pm1},\mathfrak{m}[X^{\pm1}])$ and $\alpha_o\in {\rm E}_{2n+1}(R)\cap {\rm O}_{2n+1}(\mathfrak{m})$. Here, $\alpha_o$ can be written in the form  $\alpha_o=\delta_1 \cdot \delta_2 \cdot \delta_3\cdot\delta_4 $, where $\delta_1,\delta_3\in {\rm TO}_{2n+1}(R)$, $\delta_2\in {\rm MO}_{2n+1}(R)$ and $\delta_4\in {\rm EO}_{2n+1}(R,\mathfrak{m})$. 
          Then, we get $\theta_n\cdot \alpha \cdot \theta_n^{-1}=\theta_n\cdot  \alpha_+\cdot \alpha_o\cdot \alpha_-\cdot \theta_n^{-1}=(\theta_n\cdot\alpha_+\cdot\delta_1\cdot\theta_n^{-1})\cdot\delta_2\cdot(\theta_n\cdot\delta_3\cdot\delta_4\cdot\alpha_-\cdot\theta_n^{-1})$.
          The first factor belongs to $G_+$, by Lemma~\ref{Ele5.4}, and the last factor belongs to $G_-$, by an analogue of Lemma~\ref{Ele5.4}. Also, we have  $\delta_2\in G_+\cap G_-$, hence $\theta_n\in H$. Moreover, by (i) and (ii), we have $\theta_n^{-1}=(\sigma_\delta\cdot\theta_n\cdot\sigma_\delta^{-1})\cdot X^{-1}\in H$.
       
    \end{proof}

 \begin{corollary}\label{Hor6.2}
     If $a\in R$ and $i\neq j$, then $F_i^1(a\cdot X)$, $F_i^2(a\cdot X)$, $F_i^1(a\cdot X^{-1})$, $F_i^2(a\cdot X^{-1})\in H$.
 \end{corollary}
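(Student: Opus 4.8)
The plan is to read the statement off from Lemma~\ref{Hor6.1}. That lemma says $H$ is multiplicatively closed, contains ${\rm EO}_{2n+1}(R)$, and contains $\theta_{n+1}$ and $\theta_{n+1}^{-1}$ (this last part is where $n\geq3$ is used). Since $F_i^1(a),F_i^2(a)\in{\rm EO}_{2n+1}(R)\subseteq H$ for every $a\in R$, multiplicative closure gives at once that $\theta_{n+1}^{\pm1}\cdot F_i^j(a)\cdot\theta_{n+1}^{\mp1}\in H$ for $j\in\{1,2\}$ and all $a\in R$. So the entire problem reduces to identifying these conjugates with the four families $F_i^1(aX),F_i^2(aX),F_i^1(aX^{-1}),F_i^2(aX^{-1})$.

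For that identification I would proceed as in Lemma~\ref{Ele5.1} and Corollary~\ref{Ele5.9}. Using the transvection descriptions $F_i^1(z)={\rm E}^o_{e_{i+1},\,e_1(-1)}(z)$ and $F_i^2(z)={\rm E}^o_{e_{n+i+1},\,e_1(-1)}(z)$ from the Remark after the definition of ${\rm EO}_{2n+1}(R)$, together with the diagonal action of $\theta_{n+1}$ on the standard basis (it fixes $e_1$ and scales the $v'$- and $v''$-block basis vectors by $X$ and $X^{-1}$) and Lemma~\ref{Pre2.3}(ii), which lets one pull scalars out of a transvection, a direct computation shows that conjugating $F_i^j(a)$ by $\theta_{n+1}$ or by $\theta_{n+1}^{-1}$ returns a generator of the same type $j$ with its argument rescaled by $X$ or by $X^{-1}$ (e.g. $\theta_{n+1}\cdot F_i^1(a)\cdot\theta_{n+1}^{-1}=F_i^1(aX)$). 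Running this once with $\theta_{n+1}$ and once with $\theta_{n+1}^{-1}$ therefore produces, for each $j\in\{1,2\}$, both $F_i^j(aX)$ and $F_i^j(aX^{-1})$ as $a$ ranges over $R$; combined with the previous paragraph this puts all four families inside $H$, which is the assertion.

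The only step that needs real care is this last conjugation identity — one must check that $\theta_{n+1}^{\pm1}$-conjugation scales the argument of the long-root generators $F_i^j$ by exactly $X^{\pm1}$ and leaves the result in the generating list; this depends on the precise diagonal form of $\theta_{n+1}$, in particular on the $e_1$-coordinate being left unscaled. The argument is insensitive to which of $X$ or $X^{-1}$ gets attached to $F^1$ versus $F^2$, since both exponents appear once we use $\theta_{n+1}$ and $\theta_{n+1}^{-1}$. If one would rather not compute with $\theta_{n+1}$ directly, the bookkeeping can be arranged exactly as in the proof of Corollary~\ref{Ele5.9}, first conjugating by a permutation $\sigma_\pi\in{\rm\Pi O}_{2n+1}(R)$ — which lies in $H$, since the proof of Lemma~\ref{Hor6.1}(ii) applies verbatim with ${\rm O}_{2n+1}(R)$ in place of ${\rm EO}_{2n+1}(R)$ — to bring the active indices into the block on which $\theta_{n+1}$ acts nontrivially, then applying the $\theta_{n+1}$-identity, then undoing the permutation.
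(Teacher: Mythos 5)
Your reduction is exactly the paper's: Lemma~\ref{Hor6.1} puts $F_i^j(a)$, $\theta_{n+1}^{\pm1}$ (and, via the normality argument you correctly extend to ${\rm O}_{2n+1}(R)$, the permutation matrices) into $H$, and everything then rests on the conjugation identity $\theta_{n+1}\cdot F_i^1(a)\cdot\theta_{n+1}^{-1}=F_i^1(aX)$, which the paper asserts as well. That identity is where the argument breaks. With $\theta_{n+1}=\mathrm{diag}(X,\dots,X,1,\dots,1)$ ($X$ in the first $n+1$ positions, as used in the proof of Lemma~\ref{Ele5.1}) and $F_i^1(a)=I_{2n+1}+e_{1,n+i+1}(a)-e_{i+1,1}(2a)-e_{i+1,n+i+1}(a^2)$, conjugation scales the $(j,k)$ entry by $d_j d_k^{-1}$, so the three off-diagonal entries pick up factors $X$, $1$, $X$ respectively, giving $I_{2n+1}+e_{1,n+i+1}(aX)-e_{i+1,1}(2a)-e_{i+1,n+i+1}(a^2X)$ rather than $F_i^1(aX)=I_{2n+1}+e_{1,n+i+1}(aX)-e_{i+1,1}(2aX)-e_{i+1,n+i+1}(a^2X^2)$; one checks directly that this conjugate is not even orthogonal. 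The underlying reason is that, unlike the even case, $\theta_{n+1}$ rescales the distinguished coordinate $v_0$ and sends $q(v)=v_0^2+(v')^Tv''$ to $X^2v_0^2+X(v')^Tv''$, which is not a unit multiple of $q$; so $\theta_{n+1}\notin\tilde{\rm O}$ and Lemma~\ref{Pre2.3}(v) cannot be invoked to pull the $X$ out of the transvection. The permutation bookkeeping does not help, since every $F_j^1$, $F_j^2$ already has its indices in canonical position and suffers the same three unequal scaling factors. This is precisely the step you flag as ``the only step that needs real care,'' and the direct computation you defer to does not come out as claimed; the paper's own proof contains the same gap.

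The statement itself can be rescued inside your framework. The orthogonal diagonal matrix $\eta=\mathrm{diag}(1,X,\dots,X,X^{-1},\dots,X^{-1})\in V_2$ does satisfy $\eta\cdot F_i^1(a)\cdot\eta^{-1}=F_i^1(aX)$ and $\eta\cdot F_i^2(a)\cdot\eta^{-1}=F_i^2(aX^{-1})$ (the relevant entries now scale by $X,X,X^2$, respectively $X^{-1},X^{-1},X^{-2}$), and $\eta=\theta_{n+1}\cdot\bigl(\sigma_\delta\,\theta_{n+1}\,\sigma_\delta^{-1}\bigr)^{-1}$ lies in $H$ by Lemma~\ref{Hor6.1}(i),(iii) together with $\sigma_\delta\in H$. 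Conjugating by $\eta$ and $\eta^{-1}$ then produces all four families, which is the assertion; but as written, both your argument and the paper's rest on an identity that is false.
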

 \begin{proof}
     There exist a permutation $\pi$ which commutes with $\delta$ such that $\pi(i)$ and $\pi(\delta(i))$ lies between 1 and $n$. By Lemma~\ref{Hor6.1}, it is easy to verify that $F_{\pi(i)}^1(a\cdot X)=\theta_{n+1} \cdot F_{\pi(i)}^1(a) \cdot {\theta_{n+1}}^{-1}\in H$, since $F_{\pi(i)}^1(a)\in H$. We obtain $F_i^1(a\cdot X)=\sigma_\pi^{-1}\cdot F_{\pi(i)}^1(a\cdot X)\cdot \sigma_\pi\in H$, by Lemma~\ref{Hor6.1}.
 \end{proof}
    \begin{theorem}\label{Hor6.3}
        The subgroup ${\rm EO}_{2n+1}(R[X,X^{-1}])$ is contained in $H$.
    \end{theorem}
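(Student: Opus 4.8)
The plan is to show that the generators of ${\rm EO}_{2n+1}(R[X,X^{-1}])$ all lie in $H$, since $H$ is multiplicatively closed by Lemma~\ref{Hor6.1}(i) and hence any subgroup generated by elements of $H$ is contained in $H$. Recall that ${\rm EO}_{2n+1}(R[X,X^{-1}])$ is generated by the matrices $F_i^1(z)$ and $F_i^2(z)$ with $z\in R[X,X^{-1}]$. Writing an arbitrary Laurent polynomial $z = a_{-m}X^{-m} + \dots + a_0 + \dots + a_\ell X^\ell$ and using the fact that $v\mapsto F_i^j(v)$ is additive in its argument (it is a transvection, cf.\ Lemma~\ref{Pre2.3}(iii)), we have $F_i^j(z) = \prod_k F_i^j(a_k X^k)$, so it suffices to show that each factor $F_i^1(a\cdot X^k)$ and $F_i^2(a\cdot X^k)$ lies in $H$ for $a\in R$ and $k\in\mathbb Z$.

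The next step is to handle the exponent $k$. For $k=0$ we have $F_i^j(a)\in {\rm EO}_{2n+1}(R)\subset H$ by Lemma~\ref{Hor6.1}(ii). For $|k|=1$ the assertion is exactly Corollary~\ref{Hor6.2}. For general $k$, I would argue by induction on $|k|$: conjugating $F_i^1(a\cdot X^{k-1})$ (which lies in $H$ by the inductive hypothesis, using Lemma~\ref{Hor6.1}(i) since $H$ is closed under multiplication and, via $\theta_{n+1}^{-1}\in H$, under the relevant conjugation) by $\theta_{n+1}$ multiplies the $X$-degree appropriately; more precisely, using the identity ${\rm E}^o_{\theta_{n+1}v,\theta_{n+1}w}(f) = \theta_{n+1}\cdot{\rm E}^o_{v,w}(f)\cdot\theta_{n+1}^{-1}$ together with the explicit forms $F_i^1(z) = {\rm E}^o_{e_{i+1},e_1(-1)}(z)$ and $F_i^2(z) = {\rm E}^o_{e_{n+i+1},e_1(-1)}(z)$, one sees that $\theta_{n+1}$-conjugation sends $F_i^j(a\cdot X^k)$ to $F_i^j(a\cdot X^{k\pm 1})$ up to replacing $a$ by $aX$ or $aX^{-1}$ in a controlled way. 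Since $\theta_{n+1},\theta_{n+1}^{-1}\in H$ by Lemma~\ref{Hor6.1}(iii), and $H$ is multiplicatively closed, conjugation by $\theta_{n+1}^{\pm 1}$ preserves membership in $H$ provided the conjugated element and its shifts stay among the generators we are tracking; iterating raises or lowers the exponent by one at each stage, so starting from the base cases $k\in\{-1,0,1\}$ one reaches all $k\in\mathbb Z$.

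Finally, I would assemble the pieces: given $F_i^1(z)$ with $z\in R[X,X^{-1}]$, decompose $z$ into monomials, write $F_i^1(z)$ as the corresponding product of $F_i^1(a_kX^k)$, observe each factor lies in $H$ by the previous paragraph, and conclude $F_i^1(z)\in H$ using Lemma~\ref{Hor6.1}(i); similarly for $F_i^2(z)$. Since these exhaust the generators of ${\rm EO}_{2n+1}(R[X,X^{-1}])$ and $H$ is a multiplicatively closed set containing them, we get ${\rm EO}_{2n+1}(R[X,X^{-1}])\subset H$.

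The main obstacle will be pinning down the conjugation bookkeeping precisely: one must be careful that conjugating a generator $F_i^1(a\cdot X^k)$ by $\theta_{n+1}^{\pm1}$ indeed lands back among the $F$-type generators (rather than producing a more general transvection that we have not shown lies in $H$), and that the coefficient $a\in R$ does not get dragged out of $R$ into $R[X^{\pm1}]$ in a way that breaks the induction — this is exactly the kind of degree-shift argument used in Corollary~\ref{Hor6.2} and Lemma~\ref{Ele5.8}, so the cleanest route may be to bypass the induction entirely and instead note, as in Corollary~\ref{Hor6.2}, that for each generator one can first use a permutation $\sigma_\pi\in{\rm EO}_{2n+1}(R)\subset H$ to move indices into $\{1,\dots,n\}$, then apply $\theta_{n+1}$-conjugation once, reducing directly to the known case $F_i^j(a)\in H$; the only genuinely new input beyond Corollary~\ref{Hor6.2} is handling higher powers $X^{\pm k}$, $k\geq 2$, which follows by applying $\theta_{n+1}$-conjugation $k$ times.
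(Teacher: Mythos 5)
Your proposal follows essentially the same route as the paper: the paper's proof is a one-liner asserting that the four families $F_i^1(a X^{\pm 1})$, $F_i^2(a X^{\pm 1})$ with $a\in R$ generate ${\rm EO}_{2n+1}(R[X,X^{-1}])$ and then invoking Corollary~\ref{Hor6.2} together with the multiplicative closure of $H$ from Lemma~\ref{Hor6.1}. The only difference is that you make explicit the reduction the paper leaves unstated --- splitting a Laurent-polynomial parameter into monomials by additivity of $z\mapsto F_i^j(z)$ and absorbing higher powers $X^{\pm k}$ by iterating the same $\theta_{n+1}$-conjugation already used in Corollary~\ref{Hor6.2} --- so your argument is a correct, slightly more detailed version of the paper's.
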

\begin{proof}
    Since these four generators, $F_i^1(a\cdot X), F_i^2(a\cdot X), F_i^1(a\cdot X^{-1})$ and $F_i^2(a\cdot X^{-1})$ generate the group ${\rm EO}_{2n+1}(R[X,X^{-1}])$, from Lemma~\ref{Hor6.1} and Corollary~\ref{Hor6.2}, we have the result.
\end{proof}

    \begin{lemma}\label{Hor6.4} Let $K$ be the subset of ${\rm EO}_{2n+1}(R[X,X^{-1}])\cap {\rm O}_{2n+1}({\mathfrak m}[X,X^{-1}])$, consisting of all $x$ such that $x\cdot G \subset G.$ Then we have the following.
    \begin{enumerate}[label=\emph{(\roman*)}]
            \item $K$ is closed under multiplication.
            \item $K \supset {\rm EO}_{2n+1}(R[X])\cap {\rm O}_{2n+1}({\mathfrak m}[X]) = G_+$.
            \item $K\supset {\rm EO}_{2n+1}(R[X,X^{-1}],{\mathfrak m}[X,X^{-1}]).$
        \end{enumerate}
    \end{lemma}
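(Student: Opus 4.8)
The plan is to treat the three parts in order; (i) and (ii) are formal, and the substance is in (iii), which — given everything already set up — reduces to a short computation with a single transvection. For (i): if $x,y\in K$, then $xy$ again lies in ${\rm EO}_{2n+1}(R[X,X^{-1}])\cap{\rm O}_{2n+1}(\mathfrak m[X,X^{-1}])$, since both factors do and this intersection is a group (${\rm O}_{2n+1}(\mathfrak m[X,X^{-1}])$ being the kernel of reduction modulo $\mathfrak m[X,X^{-1}]$), while $(xy)\cdot G=x\,(y\cdot G)\subset x\cdot G\subset G$. For (ii): any $x\in G_+$ certainly lies in ${\rm EO}_{2n+1}(R[X,X^{-1}])\cap{\rm O}_{2n+1}(\mathfrak m[X,X^{-1}])$, and since $G_+$ is a group, $x\cdot G=(x\,G_+)\,G_-=G_+\,G_-=G$, so $x\in K$.

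For (iii) I would first record two facts. First, by Theorem~\ref{Hor6.3}, ${\rm EO}_{2n+1}(R[X,X^{-1}])\subset H$; since $H$ is multiplicatively closed (Lemma~\ref{Hor6.1}) and ${\rm EO}_{2n+1}(R[X,X^{-1}])$ is a group, every $\gamma$ in it satisfies both $\gamma G\gamma^{-1}\subset G$ and $\gamma^{-1}G\gamma\subset G$, hence $\gamma G\gamma^{-1}=G$, i.e.\ $\gamma G=G\gamma$. Second, the mirror image of (ii): $G_-\subset K$. Indeed $G_-\subset{\rm EO}_{2n+1}(R[X,X^{-1}])$, so for $x\in G_-$ the first fact gives $x\cdot G=G\cdot x=G_+\,(G_-x)=G_+G_-=G$, using that $G_-$ is a group containing $x$.

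Now recall that ${\rm EO}_{2n+1}(R[X,X^{-1}],\mathfrak m[X,X^{-1}])$ is generated by the elements $\gamma F_i^{j}(z)\gamma^{-1}$ with $\gamma\in{\rm EO}_{2n+1}(R[X,X^{-1}])$, $z\in\mathfrak m[X,X^{-1}]$, $j\in\{1,2\}$; by (i) it is enough to place each such element in $K$. Write $z=z_{+}+z_{-}$ with $z_{+}\in\mathfrak m[X]$ the part of non-negative degree and $z_{-}\in X^{-1}\mathfrak m[X^{-1}]$ the part of strictly negative degree. Since $t\mapsto F_i^{j}(t)$ is a homomorphism of the additive group of the coefficient ring (it equals the transvection $t\mapsto{\rm E}^o_{e_{i+1},e_1(-1)}(t)$, resp.\ $t\mapsto{\rm E}^o_{e_{n+i+1},e_1(-1)}(t)$, which is additive by Lemma~\ref{Pre2.3}), we get $F_i^{j}(z)=F_i^{j}(z_{+})\cdot F_i^{j}(z_{-})$. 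Every entry of $F_i^{j}(z_{+})$ lies in $\mathfrak m[X]$ (note $z_{+}^{2}\in\mathfrak m[X]$), so $F_i^{j}(z_{+})\in{\rm EO}_{2n+1}(R[X])\cap{\rm O}_{2n+1}(\mathfrak m[X])=G_+\subset K$ by (ii), and similarly $F_i^{j}(z_{-})\in G_-\subset K$; hence $F_i^{j}(z)\in K$ by (i). Finally $\gamma F_i^{j}(z)\gamma^{-1}$ lies in ${\rm EO}_{2n+1}(R[X,X^{-1}])\cap{\rm O}_{2n+1}(\mathfrak m[X,X^{-1}])$ (the second membership because ${\rm O}_{2n+1}(\mathfrak m[X,X^{-1}])$ is normal and contains $F_i^{j}(z)$), and by the first fact
$$\bigl(\gamma F_i^{j}(z)\gamma^{-1}\bigr)\cdot G=\gamma F_i^{j}(z)\,(\gamma^{-1}G)=\gamma F_i^{j}(z)\,(G\gamma^{-1})=\gamma\bigl(F_i^{j}(z)\cdot G\bigr)\gamma^{-1}\subset\gamma G\gamma^{-1}=G,$$
so $\gamma F_i^{j}(z)\gamma^{-1}\in K$. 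This proves (iii).

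The only step beyond bookkeeping is the second preliminary fact, $G_-\subset K$: the naive attempt to copy the proof of (ii) by left multiplication would require $G_-G_+\subset G_+G_-$, which is not visible directly, so one must route through Theorem~\ref{Hor6.3} to convert left multiplication by an element of $G_-\subset{\rm EO}_{2n+1}(R[X,X^{-1}])\subset H$ into right multiplication and then exploit that $G_-$ is a group. This is precisely where Lemmas~\ref{Hor6.1}--\ref{Hor6.3}, and behind them the $\theta_{n+1}$-conjugation results of the previous section, are needed; once they are available, Lemma~\ref{Hor6.4} itself is routine.
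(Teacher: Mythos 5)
Your proof is correct and follows essentially the same route as the paper: reduce (iii) to the generators of the relative elementary group, use Theorem~\ref{Hor6.3} to commute the conjugating element past $G$, and absorb the $\mathfrak m[X]$- and $\mathfrak m[X^{-1}]$-parts into $G_+$ and $G_-$ respectively. Your explicit verification that $G_-\subset K$ (via $xG=Gx$ for $x\in{\rm EO}_{2n+1}(R[X,X^{-1}])\subset H$) supplies a step the paper's proof uses silently in the inclusion $\gamma\cdot G\subset G$ for $\gamma\in{\rm EO}_{2n+1}(\mathfrak m[X^{-1}])$, and is a worthwhile addition.
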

    \begin{proof}
    Assertion (i) and (ii) can be verified easily. Let $\beta\in {\rm EO}_{2n+1}(\mathfrak{m}[X]),\gamma\in {\rm EO}_{2n+1}(\mathfrak{m}[X^{-1}])$ and $\delta\in {\rm EO}_{2n+1}(R[X,X^{-1}])$. Then, we have $\delta^{-1}\cdot \beta \cdot \delta\cdot G\subset \delta^{-1}\cdot \beta \cdot G \cdot \delta\subset \delta^{-1}\cdot G\cdot \delta\subset G ~{\rm and}~ \delta^{-1}\cdot \gamma \cdot \delta\cdot G\subset \delta^{-1}\cdot \gamma \cdot G \cdot \delta\subset \delta^{-1}\cdot G\cdot \delta\subset G.$ Thus, we get $\delta^{-1}\cdot \beta \cdot \delta\in K$ and $\delta^{-1}\cdot \gamma \cdot \delta\in K$. Also, ${\rm EO}_{2n+1}(R[X,X^{-1}],\mathfrak{m}[X,X^{-1}])$ is generated by the matrices of the form $\delta^{-1}\cdot \beta \cdot \delta$ and $\delta^{-1}\cdot \gamma \cdot \delta$. Hence by (i), we have (iii).
\end{proof}

    \begin{theorem}\label{Hor6.5}
        For $n\geq 3$, any matrix $\alpha \in {\rm EO}_{2n+1}(R[X,X^{-1}])\cap {\rm O}_{2n+1}({\mathfrak m}[X,X^{-1}])$ can be represented in the form $\alpha_+\cdot \alpha_-$, where $\alpha_\pm \in {\rm EO}_{2n+1}(R[X^{\pm1}])\cap {\rm O}_{2n+1}({\mathfrak m}[X^{\pm1}]).$
    \end{theorem}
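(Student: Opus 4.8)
The plan is to reduce the statement to two ingredients already in hand: Theorem~\ref{Ele5.11}, which splits a single Laurent element of ${\rm EO}_{2n+1}(R[X,X^{-1}])\cap{\rm O}_{2n+1}(\mathfrak m[X,X^{-1}])$ into a factor in ${\rm EO}_{2n+1}(R[X],\mathfrak m[X])$ times a factor in the relative group ${\rm EO}_{2n+1}(R[X,X^{-1}],\mathfrak m[X,X^{-1}])$; and Lemma~\ref{Hor6.4}, which says that the set $K$ of Laurent elementary matrices carrying $G$ into $G$ is a submonoid containing both $G_+$ and ${\rm EO}_{2n+1}(R[X,X^{-1}],\mathfrak m[X,X^{-1}])$. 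Once these are combined, the conclusion follows formally; the genuine content of the result lies in Theorem~\ref{Ele5.11} (which rests on the set $V$, Corollary~\ref{Ele5.10}, and Theorem~\ref{Aux4.15}) and in Lemma~\ref{Hor6.4}.

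Concretely, let $\alpha\in{\rm EO}_{2n+1}(R[X,X^{-1}])\cap{\rm O}_{2n+1}(\mathfrak m[X,X^{-1}])$; this says exactly that $\alpha\equiv I_{2n+1}$ modulo $\mathfrak m[X,X^{-1}]$, so Theorem~\ref{Ele5.11} applies and yields $\alpha=\alpha_1\cdot\alpha_2$ with $\alpha_1\in{\rm EO}_{2n+1}(R[X],\mathfrak m[X])$ and $\alpha_2\in{\rm EO}_{2n+1}(R[X,X^{-1}],\mathfrak m[X,X^{-1}])$. Since the relative elementary group lies in the kernel of reduction mod $\mathfrak m$, we have $\alpha_1\in{\rm EO}_{2n+1}(R[X])\cap{\rm O}_{2n+1}(\mathfrak m[X])=G_+$. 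Now Lemma~\ref{Hor6.4} gives $\alpha_1\in G_+\subseteq K$ and $\alpha_2\in{\rm EO}_{2n+1}(R[X,X^{-1}],\mathfrak m[X,X^{-1}])\subseteq K$, and $K$ is closed under multiplication, so $\alpha=\alpha_1\alpha_2\in K$, i.e. $\alpha\cdot G\subseteq G$. Finally, since $I_{2n+1}\in G_+\cap G_-\subseteq G=G_+\cdot G_-$, we get $\alpha=\alpha\cdot I_{2n+1}\in\alpha\cdot G\subseteq G_+\cdot G_-$; writing out this product as $\alpha=\alpha_+\cdot\alpha_-$ with $\alpha_\pm\in G_\pm={\rm EO}_{2n+1}(R[X^{\pm1}])\cap{\rm O}_{2n+1}(\mathfrak m[X^{\pm1}])$ is precisely the desired representation.

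The only thing requiring care is bookkeeping, not mathematics. One must check that the two descriptions ${\rm O}_{2n+1}(\mathfrak m[X,X^{-1}])$ and ${\rm O}_{2n+1}(R[X,X^{-1}],\mathfrak m[X,X^{-1}])$ (together with their $R[X]$-analogues) denote the same congruence subgroup, so that the hypotheses of Theorem~\ref{Ele5.11} and Lemma~\ref{Hor6.4} are literally satisfied, and that $I_{2n+1}$ really does lie in $G$ so that the ``$K$-applied-to-the-identity'' step is legitimate. Both are immediate from the definitions, so I expect no obstacle here; essentially all the difficulty of the argument has been front-loaded into Section~5 and into the $H$/$K$ formalism of this section.
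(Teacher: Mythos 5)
Your proposal is correct and follows essentially the same route as the paper's own proof: apply Theorem~\ref{Ele5.11} to write $\alpha=\alpha_1\cdot\alpha_2$, observe via Lemma~\ref{Hor6.4} that both factors lie in $K$ so $\alpha\in K$, and then extract the splitting from the definition of $K$. The only difference is that you spell out the final step (evaluating $\alpha\cdot G\subseteq G$ at $I_{2n+1}\in G$) and the inclusion ${\rm EO}_{2n+1}(R[X],\mathfrak m[X])\subseteq G_+$, which the paper leaves implicit.
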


\begin{proof}
    Let $\alpha\in {\rm EO}_{2n+1}(R[X,X^{-1}])\cap {\rm O}_{2n+1}({\mathfrak m}[X,X^{-1}])$. Using Theorem~\ref{Ele5.11}, we have $\alpha=\alpha_1\cdot\alpha_2$ for $\alpha_1\in {\rm EO}_{2n+1}(R[X],{\mathfrak{m}}[X])$ and $\alpha_2\in {\rm EO}_{2n+1}(R[X,X^{-1}],{\mathfrak{m}}[X,X^{-1}])$. Since both factors lie in $K$, the product also lies in $K$, by Lemma~\ref{Hor6.4}. Using definition of $K$, we have the splitting $\alpha=\alpha_+\cdot \alpha_-$ where $\alpha_\pm \in {\rm EO}_{2n+1}(R[X^{\pm1}])\cap {\rm O}_{2n+1}({\mathfrak m}[X^{\pm1}])$. 
\end{proof}
   \begin{theorem}\label{Hor6.6} {$($Horrocks theorem$)$}
      Suppose $R$ is a commutative ring in which 2 is invertible (need not be local). Let $\alpha \in {\rm O}_{2n+1}(R[X])$ and $\beta \in {\rm O}_{2n+1}(R[X^{-1}])$, for $n\geq 3$. If $\alpha \cdot \beta^{-1}\in {\rm EO}_{2n+1}(R[X,X^{-1}])$, then $\alpha \in {\rm O}_{2n+1}(R)\cdot{\rm EO}_{2n+1}(R[X])$ and $\beta \in {\rm O}_{2n+1}(R) \cdot {\rm EO}_{2n+1}(R[X^{-1}])$.
   \end{theorem}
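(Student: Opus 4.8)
The plan is to reduce to the case of a local ring and then feed everything into the splitting result of Theorem~\ref{Hor6.5}.

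First I would pass to the local case. Localising the hypotheses at a maximal ideal $\mathfrak m$ of $R$ yields $\alpha_{\mathfrak m}\in {\rm O}_{2n+1}(R_{\mathfrak m}[X])$, $\beta_{\mathfrak m}\in {\rm O}_{2n+1}(R_{\mathfrak m}[X^{-1}])$ and $\alpha_{\mathfrak m}\beta_{\mathfrak m}^{-1}\in {\rm EO}_{2n+1}(R_{\mathfrak m}[X,X^{-1}])$, so it suffices to treat local $R$: Theorem~\ref{Aux4.14}, together with its mirror image obtained by interchanging $X$ and $X^{-1}$, then upgrades the local conclusions to $\alpha\in {\rm O}_{2n+1}(R)\cdot {\rm EO}_{2n+1}(R[X])$ and $\beta\in {\rm O}_{2n+1}(R)\cdot {\rm EO}_{2n+1}(R[X^{-1}])$. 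So assume $R$ is local with maximal ideal $\mathfrak m$ and residue field $k$, and put $\gamma:=\alpha\beta^{-1}\in {\rm EO}_{2n+1}(R[X,X^{-1}])$. By Theorem~\ref{Aux4.10} the sets ${\rm O}_{2n+1}(R)\cdot {\rm EO}_{2n+1}(R[X])$ and ${\rm O}_{2n+1}(R)\cdot {\rm EO}_{2n+1}(R[X^{-1}])$ are subgroups; hence I may multiply $\alpha$ on either side by any matrix from ${\rm O}_{2n+1}(R)$ or ${\rm EO}_{2n+1}(R[X])$, and $\beta$ on either side by any matrix from ${\rm O}_{2n+1}(R)$ or ${\rm EO}_{2n+1}(R[X^{-1}])$, without affecting the conclusion to be proved. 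In particular, the substitution replacing $(\alpha,\beta)$ by $(g_+^{-1}\alpha,\ g_-\beta)$ for $g_+\in {\rm EO}_{2n+1}(R[X])$ and $g_-\in {\rm EO}_{2n+1}(R[X^{-1}])$ is harmless and replaces $\gamma$ by $g_+^{-1}\gamma g_-^{-1}$, which again lies in ${\rm EO}_{2n+1}(R[X,X^{-1}])$.

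Next I would use such a substitution to arrange $\gamma\equiv I_{2n+1}\ (\mathrm{mod}\ \mathfrak m[X,X^{-1}])$. Over $k$, Theorem~\ref{Aux4.12} gives $\bar\alpha=a\,\bar\varepsilon_+$ and $\bar\beta=b\,\bar\varepsilon_-$ with $a,b\in {\rm O}_2(k)\subseteq {\rm O}_{2n+1}(k)$ (note ${\rm GL}_1(k[X^{\pm1}])=k^\ast$, so ${\rm O}_2(k[X^{\pm1}])={\rm O}_2(k)$), $\bar\varepsilon_+\in {\rm EO}_{2n+1}(k[X])$ and $\bar\varepsilon_-\in {\rm EO}_{2n+1}(k[X^{-1}])$. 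Writing $\bar\gamma=(ab^{-1})\,(b\bar\varepsilon_+\bar\varepsilon_-^{-1}b^{-1})$ and invoking normality (Theorem~\ref{Aux4.10}), the second factor lies in ${\rm EO}_{2n+1}(k[X,X^{-1}])$, whence $ab^{-1}\in {\rm O}_{2n+1}(k)\cap {\rm EO}_{2n+1}(k[X,X^{-1}])$; since $k$ is a retract of $k[X,X^{-1}]$ (send $X\mapsto 1$), Lemma~\ref{Aux4.11}(i) gives $ab^{-1}\in {\rm EO}_{2n+1}(k)$. Hence $\bar\gamma=\bar g_+\bar g_-$ with $\bar g_+:=(ab^{-1})(b\bar\varepsilon_+b^{-1})\in {\rm EO}_{2n+1}(k[X])$ and $\bar g_-:=b\bar\varepsilon_-^{-1}b^{-1}\in {\rm EO}_{2n+1}(k[X^{-1}])$. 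Lifting the elementary generators produces $g_+\in {\rm EO}_{2n+1}(R[X])$ and $g_-\in {\rm EO}_{2n+1}(R[X^{-1}])$ reducing to $\bar g_+$ and $\bar g_-$, and after the harmless substitution $(\alpha,\beta)\mapsto(g_+^{-1}\alpha,\ g_-\beta)$ we obtain $\gamma\in {\rm EO}_{2n+1}(R[X,X^{-1}])\cap {\rm O}_{2n+1}(\mathfrak m[X,X^{-1}])$.

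Finally I would patch. Theorem~\ref{Hor6.5} now gives $\gamma=\gamma_+\gamma_-$ with $\gamma_\pm\in {\rm EO}_{2n+1}(R[X^{\pm1}])\cap {\rm O}_{2n+1}(\mathfrak m[X^{\pm1}])$. Then $\gamma_+^{-1}\alpha=\gamma_-\beta$ lies in ${\rm O}_{2n+1}(R[X])\cap {\rm O}_{2n+1}(R[X^{-1}])={\rm O}_{2n+1}(R)$; denoting it $\delta$, one obtains $\alpha=\gamma_+\delta\in {\rm EO}_{2n+1}(R[X])\cdot {\rm O}_{2n+1}(R)={\rm O}_{2n+1}(R)\cdot {\rm EO}_{2n+1}(R[X])$ and, symmetrically, $\beta=\gamma_-^{-1}\delta\in {\rm O}_{2n+1}(R)\cdot {\rm EO}_{2n+1}(R[X^{-1}])$ (using Theorem~\ref{Aux4.10} once more); reversing the harmless modifications settles the local case. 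The serious technical content is already invested in Theorem~\ref{Hor6.5} (via Theorem~\ref{Ele5.11}), namely the step of splitting an element of ${\rm EO}_{2n+1}(R[X,X^{-1}])$ that is trivial modulo $\mathfrak m$ into its positive and negative Laurent parts; within the present argument the only point requiring care is the bookkeeping of the middle step — making sure that $\gamma$ can be rendered $\equiv I$ modulo $\mathfrak m$ purely by modifications of $\alpha$ and $\beta$ that leave the target statements unchanged.
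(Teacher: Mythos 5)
Your proof is correct and follows essentially the same route as the paper: local--global reduction via Theorem~\ref{Aux4.14}, normalization modulo $\mathfrak m$ using Theorem~\ref{Aux4.12} and Lemma~\ref{Aux4.11}, the splitting of Theorem~\ref{Hor6.5}, and patching over ${\rm O}_{2n+1}(R)={\rm O}_{2n+1}(R[X])\cap{\rm O}_{2n+1}(R[X^{-1}])$. The only cosmetic difference is that you arrange $\alpha\beta^{-1}\equiv I \pmod{\mathfrak m[X,X^{-1}]}$ directly by lifting a factorization of $\overline{\alpha\beta^{-1}}$ into ${\rm EO}_{2n+1}(k[X])\cdot{\rm EO}_{2n+1}(k[X^{-1}])$, whereas the paper first brings $\phi(\alpha)$ and $\phi(\beta)$ separately to the identity via a monomial lift (Theorem~\ref{Aux4.15}); both reach the hypothesis of Theorem~\ref{Hor6.5}.
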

\begin{proof}
Using Theorem~\ref{Aux4.14}, it is sufficient to consider the case of a local ring. Assume $R$ is a local ring with maximal ideal $\mathfrak{m}$. 
Let $k$ be the quotient field $R/\mathfrak m$ and $\phi$ 
be the canonical homomorphism from $R$ to $k$.
By Theorem~\ref{Aux4.12}, multiply $\alpha$ and $\beta$ by matrices in ${\rm EO}_{2n+1}(R[X])$ and ${\rm EO}_{2n+1}(R[X^{-1}])$ respectively.
\vspace{1mm}

Assume that, $\phi(\alpha) \in {\rm O}_{2n+1}(k[X]) 
\subset {\rm MO}_{2n+1}(k)$ 
and $\phi(\beta) \in {\rm O}_{2n+1}(k[X^{-1}]) \subset {\rm MO}_{2n+1}(k)$. Since $\alpha \cdot \beta^{-1} \in {\rm EO}_{2n+1}(R[X,X^{-1}]$, it follows that $\phi(\alpha) \cdot {\phi(\beta)}^{-1} \in {\rm EO}_{2n+1}(k[X,X^{-1}])$. Using Lemma~\ref{Aux4.11}, we get $\phi(\alpha) \cdot (\phi(\beta))^{-1} \in {\rm EO}_{2n+1}(k)$. Then, there exists $\alpha_1 \in {\rm MO}_{2n+1}(R) $ such that $\phi(\alpha_1)=\phi(\alpha)$, by Theorem~\ref{Aux4.15} and there exists $\gamma \in {\rm EO}_{2n+1}(R) $ such that $\phi(\gamma)=\phi(\alpha)\cdot \phi(\beta)^{-1}$. 

Consider the matrix $(\alpha_1^{-1}\cdot \alpha)\cdot(\alpha_1^{-1}\cdot\gamma\cdot\beta)^{-1}=\alpha_1^{-1}\cdot(\alpha\cdot\beta^{-1}\cdot\gamma^{-1})\cdot\alpha_1$. By normality of DSER group (Theorem~\ref{Aux4.10}), this matrix lies in ${\rm EO}_{2n+1}(R[X,X^{-1}])$. Therefore, it suffices to prove that the matrices $\alpha_1^{-1}\cdot \alpha\in {\rm O}_{2n+1}(R) \cdot {\rm EO}_{2n+1}(R[X])$ and the matrix $\alpha_1^{-1}\cdot\gamma\cdot\beta \in {\rm O}_{2n+1}(R)\cdot {\rm EO}_{2n+1}(R[X^{-1}])$. Without loss of generality, we can assume that $\phi(\alpha)=I_{2n+1}$ and $\phi(\beta)=I_{2n+1}$. Hence, we get $\alpha \cdot\beta^{-1} \in {\rm EO}_{2n+1}(R[X,X^{-1}])\cap {\rm O}_{2n+1}({\mathfrak m}[X,X^{-1}])=G$. Thus, by Theorem~\ref{Hor6.5}, we get $\alpha\cdot\beta^{-1}=\alpha_+\cdot \alpha_-$, where $\alpha_{\pm}\in G_\pm$. We have $(\alpha_+)^{-1}\cdot\alpha=\alpha_-\cdot\beta\in{\rm O}_{2n+1}(R[X])\cap{\rm O}_{2n+1}(R[X^{-1}])$, which is  the group ${\rm O}_{2n+1}(R)$. Hence, we have both $\alpha \in {\rm O}_{2n+1}(R) \cdot{\rm EO}_{2n+1}(R[X])$ and $\beta \in {\rm O}_{2n+1}(R) \cdot {\rm EO}_{2n+1}(R[X^{-1}])$.
  
  \end{proof}

\section{Acknowledgement} The first author gratefully acknowledges the support of the KSCSTE Young Scientist Award Scheme (2021-KSYSA-RG), Government of Kerala, for providing the grant that enabled this research. She also expresses her gratitude to SERB for the SURE grant (SUR/2022/004894) and to RUSA (RUSA 2.0-T3A), Government of India. The second author is thankful for the support received from the UGC Junior Research Fellowship Scheme (Ref. No. 221610053453). The authors thank Dr. Aparna Pradeep V. K. and Gayathry Pradeep for their valuable suggestions and contributions to the discussions.

        \bibliographystyle{amsplain}

   \end{document}